\newcommand{\dist}{\mathrm{dist}}
\newcommand{\spt}{\mathrm{spt}}
\newcommand{\haus}{\mathcal{H}}
\newcommand{\leb}{\mathcal{L}}
\newcommand{\sing}{\mathrm{sing}}
\newcommand{\cT}{\mathcal{T}}
\newcommand{\cG}{\mathcal{G}}
\newcommand{\cS}{\mathcal{S}}
\newcommand{\cB}{\mathcal{B}}
\newcommand{\cF}{\mathcal{F}}
\newcommand{\cU}{\mathcal{U}}
\newcommand{\R}{\mathbb R}
\newcommand{\fb}{{ \{ u > 0 \} }}
\newcommand{\eps}{\epsilon}
\newcommand{\reg}{\mathrm{reg}}
\newtheorem{theorem}{Theorem}[section]
\newtheorem*{theorem*}{Theorem}
\newtheorem{proposition}[theorem]{Proposition}
\newtheorem{lemma}[theorem]{Lemma}
\newtheorem{corollary}[theorem]{Corollary}
\theoremstyle{definition}
\newtheorem{definition}[theorem]{Definition}
\theoremstyle{remark}
\newtheorem{remark}{Remark}[section]
\theoremstyle{remark}
\theoremstyle{remark}
\theoremstyle{remark}
\theoremstyle{remark}
\title{Quantitative stratification for some free-boundary problems}
\author{Nick Edelen and Max Engelstein}
\thanks{N.~Edelen was supported by NSF grant DMS-1606492. M.~Engelstein was partially supported by NSF Grant No. DMS-1440140 while the author was in residence at MSRI in Berkeley, California, during Spring 2017. The authors  thank Jeff Cheeger and Robin Neumayer for pointing out the references \cite{che-na} and \cite{kriv-lin} respectively.  We also thank Tatiana Toro for helpful comments on an earlier draft of this paper. Finally, we thank an anonymous referee for their comments.}
\address{Department of Mathematics\\Massachusetts Institute of Technology\\Cambridge, MA, 02139-4307 }
\email{nedelen@mit.edu and maxe@mit.edu}
\begin{document}

\begin{abstract}
In this paper we prove the rectifiability of and measure bounds on the singular set of the free boundary for minimizers of a functional first considered by Alt-Caffarelli \cite{alt-caf}. Our main tools are the Quantitative Stratification and Rectifiable-Reifenberg framework of Naber-Valtorta \cite{naber-valtorta:harmonic}, which allow us to do a type of ``effective dimension-reduction." The arguments are sufficiently robust that they apply to a broad class of related free boundary problems as well. 
\end{abstract}

\maketitle
\tableofcontents

\section{Introduction}

Let us consider a function $u : \Omega \subset \R^n \to \R$ minimizing the functional
\begin{equation}\label{altcaffunctional}\tag{$\star_{\Omega, Q}$}
J(u) = \int_\Omega |Du(x)|^2 + Q^2(x)1_{\{u > 0\}}(x)\ dx, 
\end{equation}
subject to the condition $u|_{\partial\Omega} = u_0|_{\partial\Omega} \geq 0$ for some $u_0 \in W^{1,2}(\Omega)$, and \emph{positive} $Q \in C^{\alpha}(\Omega)$.  We will abuse notation and also refer to this minimization problem as $(\star_{\Omega, Q})$.

For unbounded $\Omega$, let us say $u$ solves $(\star_{\Omega, Q})$ if $u|_{\partial\Omega} = u_0|_{\partial\Omega}$ and $u$ minimizes $J(u)$ on every compact subset, in the sense that for every $D \subset \subset \Omega$, we have
\[
\int_D |Du|^2 + Q^2 1_{\fb} \leq \int_D |Dv|^2 + Q^2 1_{\fb} \quad \forall v \text{ s.t. } u - v \in W^{1,2}_0(D).
\]

For $u$ as above, $\partial \{ u > 0 \}$ is called the \emph{free-boundary}, and $(\star_{\Omega, Q})$ is a simple example (``one-phase with constant coefficients'') in a broad class of free-boundary problems.  These problems arise naturally in physical models, such as heat flows and jet cavitation.  Other fundamental types of free-boundary problems, which we shall also address in this paper, are the two-phase, vector-valued one-phase, and one-phase almost-minimizers (see Section \ref{section:other-probs}).

The problem $(\star_{\Omega, Q})$ was first studied systematically by Alt-Caffarelli in their seminal paper \cite{alt-caf}.  They demonstrated existence and certain regularity of $u$ and its free-boundary, and proved that $u$ essentially solves the system:
\[
u \geq 0, \quad \Delta u = 0 \text{ on } \{ u > 0 \} \cap \Omega, \quad |\nabla u| = Q \text{ on } \partial \{u > 0 \} \cap \Omega . 
\]
Alt-Caffarelli showed the free-boundary is a set of locally finite perimeter, and proved an $\eps$-regularity (also known as ``improvement of flatness") theorem for the free-boundary (see Theorem \ref{thm:altcaf}).

The \emph{regular set} of $\partial \{ u > 0 \}$, written $\reg(u)$, is defined to be the set of points in $\partial \{u > 0 \}$ near which $\partial \{u > 0\}$ can be written as a $C^{1,\beta}$ graph, for some $\beta > 0$.  The \emph{singular set}, which we write as $\sing(u)$, consists  of all other points in $\partial \{ u > 0\}$.  By the regularity theorem of \cite{alt-caf}, the regular set is open, dense, and has full $\haus^{n-1}$-measure.  Moreover, if $Q$ is smooth then the regular set is smooth also.

In general the singular set may be non-empty, but following a strong analogy between solutions of $(\star_{\Omega, Q})$ and minimal surfaces, significant progress has been made towards \emph{partial} regularity.  That is, demonstrating some kind of smallness of the singular set.  Let us summarize the known results in this direction: given $u$ solving $(\star_{\Omega, Q})$, then
\begin{enumerate}[label=(\roman*)]
\item For $n = 2$, $\sing(u) = \emptyset$ (Alt-Caffarelli \cite{alt-caf}).

\item Let $k^*$ be the first dimension admitting a non-linear, one-homogeneous solution of $(\star_{\R^{k^*}, 1})$.  Then we have the Hausdorff dimension bound $\dim \sing(u) \leq n-k^*$ (Weiss, \cite{weiss:partial-reg}).

\item $k^* \geq 4$ (Caffarelli-Jerison-Kenig \cite{cafjerken}).

\item $k^* \geq 5$ (Jerison-Savin \cite{JerisonSavinStability}). 

\item $k^* \leq 7$ (De Silva-Jerison \cite{desilvajersingular}).
\end{enumerate}
The exact value of $k^* \in \{5, 6, 7\}$, and hence the sharp dimension bound on the singular set, is still unknown.  Following the analogy with area-minimizing hypersurfaces, one might expect $k^* = 7$ (supported by the fact that the De Silva-Jerison example is closely related to the Simons cone).  We mention also that the central contribution of Weiss \cite{weiss:partial-reg} was to introduce a monotone quantity (see Section \ref{section:background}), which enforces one-homogeneity of blow-ups, and allowed him to adapt standard dimension reducing techniques of Federer-Almgren to the free-boundary setting.

In this paper we are interested in the \emph{quantitative} and \emph{fine-scale} structure of the singular set.  We shall adapt the techniques of Naber-Valtorta \cite{naber-valtorta:harmonic} to prove rectifiability and packing bounds on $\sing(u)$. For example, we obtain the following Theorem.  We recall that a set is $k$-rectifiable if $\haus^k$-almost-all of it is contained in the countable union of $C^1$ $k$-manifolds.
\begin{theorem}\label{thm:teaser}
Let $u$ solve $(\star_{\Omega, Q})$.  Then for every $D \subset\subset D'\subset\subset \Omega$, we have
\[
\haus^{n-k^*}(\sing(u) \cap D) \leq C,
\]
and $\sing(u) \cap D$ is $(n-k^*)$-rectifiable.  Here $C$ depends only on the quantities
\[
n, \, |Q|_{C^\alpha(D')}, \,  \min_{D'} Q , \, \dist(D, \partial D'), \, \leb^n(D').
\]
\end{theorem}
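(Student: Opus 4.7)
The plan is to adapt the Naber--Valtorta quantitative stratification framework, which reduces rectifiability and measure bounds for singular sets of PDE problems to (i) a monotone quantity encoding symmetry of blow-ups, and (ii) a quantitative cone-splitting principle fed into a Reifenberg-type covering. The monotone quantity here is the Weiss energy $W(x,r)$ mentioned in the introduction, whose monotonicity forces blow-ups to be $1$-homogeneous and locally-minimizing, while the definition of $k^*$ forces the spine of any such blow-up at a singular point to have dimension at most $n-k^*$.

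First I would fix the \emph{quantitative strata}: for $\eta > 0$ and $0 < r < 1$, let $\cS^k_{\eta,r}(u) \subset \partial\fb$ be the set of points $x$ such that for every $s \in [r,1]$ the rescaling $u_{x,s}(y) = s^{-1}u(x+sy)$ is not $\eta$-close in $L^1(B_1)$ to any $(k+1)$-symmetric $1$-homogeneous solution of $(\star_{\R^n,Q(x)})$. Standard blow-up arguments together with Weiss monotonicity and the dimension reduction of Federer--Almgren show that if $\eta = \eta(n,|Q|_{C^\alpha},\min Q)$ is chosen small enough, then
\[
\sing(u) \cap D \subset \bigcup_{r > 0} \cS^{n-k^*}_{\eta,r}(u),
\]
so it suffices to obtain $\haus^{n-k^*}$-bounds and rectifiability of $\cS^{n-k^*}_{\eta,r}$ uniform in $r$.

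The main analytic input, and the step I expect to be the principal obstacle, is a \emph{quantitative cone-splitting lemma} paired with an $L^2$-best-plane estimate. The cone-splitting lemma should assert that if the Weiss energy drop $W(x,2s) - W(x,s/2)$ is small and there are approximately homogeneous directions spanning a $k$-plane $V$, then $u$ is in fact quantitatively $V$-translation-invariant at scale $s$; this one proves by compactness from the strict monotonicity of $W$. The decisive quantitative bound is then the Naber--Valtorta type estimate that for any finite Borel measure $\mu$ supported on $\cS^{n-k^*}_{\eta,r} \cap B_r(x_0)$,
\[
\int_{B_r(x_0)} \int_0^r \beta^{n-k^*}_{\mu,2}(y,s)^2 \, \frac{ds}{s} \, d\mu(y) \leq C \int_{B_{2r}(x_0)} \bigl( W(y, C r) - W(y, 0^+) \bigr) \, d\mu(y),
\]
where $\beta^k_{\mu,2}$ is the Jones $L^2$ number measuring how close $\spt\mu$ is to a $k$-plane. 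Deriving this inequality requires carefully relating the failure of $u$ to be symmetric at $y$ and scale $s$ to the integrated Weiss derivative; in the free-boundary setting one must take care that admissible test perturbations preserve the sign constraint on $u$ and interact correctly with the $Q^2 1_{\fb}$ term, which is the main technical departure from the Dirichlet energy case.

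Granted the above estimate and cone-splitting, the proof concludes via the standard Naber--Valtorta covering scheme: build a cover of $\cS^{n-k^*}_{\eta,r}$ by balls classified as either \emph{good} (large energy drop, controlled by the $\beta_2$ integral and the Discrete Rectifiable-Reifenberg theorem of \cite{naber-valtorta:harmonic}) or \emph{bad} (small energy drop, where cone-splitting forces further $(n-k^*)$-stratification so the stratum is covered with much smaller total measure). Iterating this dichotomy produces a uniform packing bound $\haus^{n-k^*}(B_r \cap \cS^{n-k^*}_{\eta,r}) \leq C r^{n-k^*}$ with $C$ depending only on the claimed quantities, and the Reifenberg output upgrades this to $(n-k^*)$-rectifiability of $\bigcap_r \cS^{n-k^*}_{\eta,r} \supset \sing(u) \cap D$. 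The dependence on $|Q|_{C^\alpha(D')}$ and $\min_{D'} Q$ enters through the $\eps$-regularity theorem and the Weiss monotonicity with error, while $\dist(D,\partial D')$ and $\leb^n(D')$ control the initial energy bound for $W$.
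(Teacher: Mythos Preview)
Your outline is essentially the paper's own strategy: Weiss monotonicity, quantitative strata, an $L^2$ $\beta$-number estimate (the paper's Theorem~\ref{thm:L2-est}), and a good/bad ball covering fed into the Discrete- and Rectifiable-Reifenberg theorems. Two corrections worth flagging. First, your good/bad roles are reversed relative to the paper: here a \emph{good} ball is one with \emph{small} density drop on $S^k_{\eps}$ (so the $\beta$-numbers are small and Lemma~\ref{lem:centered-packing} plus Discrete-Reifenberg give packing), while a \emph{bad} ball is one where the dichotomy of Theorem~\ref{thm:dichotomy} forces the high-density set near a $(k-1)$-plane, yielding cheap packing from being one dimension down; the balls with \emph{large} drop are the stop balls in an outer recursion (Theorem~\ref{thm:key-packing}) that terminates because total density is bounded. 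Second, the $L^2$ estimate \eqref{eqn:L2-est-concl} is obtained by compactness (Theorem~\ref{thm:compactness} and strong $W^{1,2}$ convergence), not by variational perturbation, so the sign constraint on $u$ is not the obstacle you anticipate --- the genuine work is in reconciling the two density-drop conditions via the tree construction of Section~\ref{section:main-const}.
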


More generally, we shall extend the notions of \emph{quantitative stratification} to solutions of $(\star_{\Omega, Q})$.  The $k$-stratum is defined classically in terms of degrees of symmetry of the blow-ups (see Section \ref{section:strata}), and, through $\eps$-regularity Theorems, is directly related to the singular set.  As introduced in \cite{che-na}, we define for $(\star_{\Omega, Q})$ the $(k, \eps, r)$-strata, which are effective notions of the $k$-strata.  Theorem \ref{thm:teaser} is a Corollary of more precise packing estimates and rectifiability of the effective stata.

We also observe that many of our results here actually hold for a whole class of related free boundary problems. In particular, they hold (with only minor modifications) for the two-phase problem first considered by Alt-Caffarelli-Friedman \cite{alt-caf-fried}, the vectorial version of the Alt-Caffarelli problem (which has been an object of great interest recently, see \cite{caf-sha-yer} and \cite{maz-ter-vel}), and for almost-minimizers (in the sense of David and Toro, see \cite{davidtoroalmostminimizers}).  For simplicity we shall write this paper only for the one-phase problem with H\"older continuous $Q$, but the proofs carry over esssentially verbatim to other problems.

The techniques of Naber-Valtorta are very powerful, and should perhaps be seen as a kind of quantitative dimension reducing.  They have been used to prove similar results for harmonic maps \cite{naber-valtorta:harmonic}, varifolds with bounded mean curvature \cite{naber-valtorta:varifold}, approximate harmonic maps \cite{naber-valtorta:approx-harmonic}, and Q-valued harmonic maps \cite{delellis:q-valued}.  We feel the techniques are sufficiently new and involved that it would benefit readers to see their adaptation to the free boundary setting in significant detail.

\subsection{Notation and background} \label{section:background}

In this section we set up our notation, and recollect various background results.

We always work in $\R^n$.  Given a set $A \subset \R^n$, let $d(x, A)$ be the Euclidean distance from $x$ to $A$, and write
\[
B_r(A) = \{ x \in \R^n : d(x, A) < r \}
\]
for the open $r$-tubular neighborhood about $A$.  We shall write $|A| = \leb^n(A)$ for the $n$-volume of $A$, and $\haus^k(A)$ for the $k$-dimensional Hausdorff measure.  We let $d_H$ denote the usual Hausdorff distance between sets, so
\[
d_H(A_1, A_2) = \inf \{ r : A_1 \subset B_r(A_2) \text{ and } A_2 \subset B_r(A_1) \}.
\]
We write $\omega_n = |B_1(0)|$.

Given points $y_0, \ldots, y_k \in \R^n$, we let $<y_0, \ldots, y_k>$ be the affine $k$-space spanning $y_0, \ldots, y_k$.

Given a Borel measure $\mu$, the \emph{Jones' $\beta^k_2$-number} measures a one-sided $L^2$ $\mu$-distance to $k$-planes:
\begin{equation}\label{eqn:beta}
\beta^k_{\mu,2}(x, r)^2 = \inf_{V^k} r^{-k-2} \int_{B_r(x)} d(z, L)^2 d\mu(z) ,
\end{equation}
where the infimum is over \emph{affine} $k$-planes $V^k$.  We write $V^k_{\mu, 2}(x, r)$ for a choice of $k$-plane realizing $\beta$.  When unambiguous we may simply write $\beta(x, r)$.

Take a domain $\Omega \subset \R^n$.  We say $\Omega$ is a Lipschitz domain if the boundary $\partial \Omega$ can be written locally as a Lipschitz graph.  Given $f : \Omega \to \R$, we write $|f|_{C^0(\Omega)}$, $|f|_{C^\alpha(\Omega)}$, $||f||_{L^p(\Omega)}$, $||f||_{W^{1,p}(\Omega)}$ for the usual sup-, Holder-, $L^p$-, and Sobolev-norms.  We shall use the Holder semi-norm
\begin{align*}
[f]_{\alpha,\Omega} = \sup_{x \neq y \in \Omega} \frac{|f(x) - f(y)|}{|x - y|^\alpha} .
\end{align*}

Similarly, $C^0(\Omega)$, $C^\alpha(\Omega)$, $L^p(\Omega)$, $W^{1,2}(\Omega)$ are the usual function spaces.  $C^\infty_0(\Omega)$ is the space of compactly supported $C^\infty$ functions in $\Omega$, and $W^{1,p}_0(\Omega)$ is the closure of $C^\infty_0(\Omega)$ with respect to $||\cdot ||_{W^{1,p}(\Omega)}$.

We can and shall assume any constant $c$ or $c_i$ is at least $1$.  The precise value of a constant written $c$ (without any subscript) may increase from line to line.

\vspace{2.5mm}

Alt-Caffarelli have shown certain (quantitative) regularity of $u$, depending only on $Q$ and $d(\cdot, \partial\Omega)$.  We summarize the relevant results below.  Since $Q$ is positive, for shorthand we will often write $|Q + 1/Q|_{C^0} \leq \Lambda$ in place of $1/\Lambda \leq \min Q \leq \max Q \leq \Lambda$.
\begin{theorem}[Basic existence, regularity \cite{alt-caf}]\label{thm:altcaf}
Let $u$ solve $(\star_{\Omega, Q})$, and suppose $|Q + 1/Q|_{C^0(\Omega)} \leq \Lambda$.  Let us fix an $x \in \fb \cap \Omega$ with $2r < d(x, \partial\Omega)$.  Then the following hold:
\begin{enumerate}
\item (existence) If $\Omega$ is a Lipschitz domain, and $J(u_0) < \infty$, then the minimizer $u$ exists.

\item (representation) The free-boundary $\partial \fb$ has locally-finite $\haus^{n-1}$-measure, and $\Delta u = q \haus^{n-1} \llcorner \partial \fb$ in the sense of distributions, for some measurable function $q$ such that $q = Q$ $\haus^{n-1}$-a.e. in $\partial \fb$.

\item (Lipschitz control) We have
\[
||Du||_{L^\infty(B_r(x))} \leq c(n)\Lambda.
\]

\item (non-degeneracy) For $y \in B_r(x)$ we have
\[
\frac{1}{c(n,\Lambda)} d(y, \partial \fb) \leq u(y) \leq c(n,\Lambda) d(y, \partial \fb) .
\]

\item ($\eps$-regularity) Assume $Q\in C^\alpha(\Omega)$. There are constants $\eps_0(n)$, and $\beta(n, \alpha, \Lambda) < 1$ so that if
\begin{equation}\label{eqn:eps-reg-cond}
\partial \fb \cap B_r(x) \subset B_{\eps r}(V^{n-1})
\end{equation}
for some affine $(n-1)$-plane $V^{n-1}$, then $\partial \fb \cap B_{r/2}(x)$ is a $C^{1,\beta}$ graph over $V$.
\end{enumerate}
\end{theorem}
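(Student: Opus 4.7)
The plan is to prove (1)--(5) in the order in which they logically depend on each other, which also matches the arrangement of Alt--Caffarelli's original treatment. For existence (1), apply the direct method to the admissible class $u_0 + W^{1,2}_0(\Omega)$: a minimizing sequence $u_k$ has $\int |Du_k|^2$ uniformly bounded, so combined with the fixed trace it is weakly precompact in $W^{1,2}$, with an a.e.\ limit $u$ along a subsequence. Weak lower semicontinuity of the Dirichlet integral is standard, and for the bulk term observe that $\{u>0\} \subset \liminf\{u_k>0\}$ up to a null set, so Fatou gives lower semicontinuity of $\int Q^2 1_{\fb}$.

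For the Lipschitz bound (3), the classical device is harmonic replacement: on any ball $B_r(x) \subset\Omega$, let $v$ be the harmonic extension of $u|_{\partial B_r}$. Testing $v$ against minimality yields $\int_{B_r}(|Du|^2 - |Dv|^2) \leq \Lambda^2 |B_r|$; since $\Delta u \geq 0$ distributionally (from non-negative test variations), comparison gives $u\leq v$ on $B_r$. A standard Caccioppoli/mean-value iteration on shrinking balls then produces $|Du|\leq c(n)\Lambda$. Non-degeneracy (4) is the complementary competitor argument: if $\sup_{B_\rho(y)}u$ is too small compared to $\rho$, one replaces $u$ by a competitor that is identically zero on an inner ball (e.g.\ the obstacle-problem solution with zero obstacle), making the Dirichlet term increase by $O(\rho^{n-2}(\sup u)^2)$ while the bulk term drops by $\gtrsim \Lambda^{-2}\rho^n$; minimality forces $u(y)\gtrsim \rho$.

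For representation (2), subharmonicity plus Lipschitz control show $\Delta u$ is a locally finite Radon measure supported on $\partial \fb$. Performing an inner variation $\Phi_t(x) = x + t\xi(x)$ and differentiating $J(u\circ\Phi_t^{-1})$ at $t=0$ yields the weak free-boundary identity $|\nabla u|^2 = Q^2$ on $\partial \fb$ (against vector fields $\xi$), which combined with $\Delta u = q\haus^{n-1}\llcorner \partial \fb$ gives $q = Q$ a.e.; the local $\haus^{n-1}$-finiteness of $\partial \fb$ follows from non-degeneracy plus Lipschitz control via a Vitali covering.

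Finally, for $\eps$-regularity (5), one runs the Alt--Caffarelli flatness-improvement machinery. Assume for contradiction sequences $u_k$ satisfying \eqref{eqn:eps-reg-cond} with $\eps_k\to 0$ fail to be $C^{1,\beta}$ at the next scale. Non-degeneracy and the uniform Lipschitz bound force the free boundaries to converge in Hausdorff distance and the rescaled $u_k$ to converge locally uniformly to a global minimizer with a planar free boundary, which by classification must be the half-plane profile $Q(x_0)\, x_n^+$. Linearizing the problem about this profile produces a harmonic function on a half-space with oblique Neumann-type boundary condition; its a priori decay estimates translate back to an improvement-of-flatness inequality at a definite smaller scale. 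Iterating geometrically then gives the $C^{1,\beta}$ graph, with $\beta$ determined by the iteration ratio together with the H\"older exponent $\alpha$ of $Q$ (the latter enters through the Schauder step at each scale). This last step is the main obstacle: the compactness of blow-ups, the classification of global planar-boundary minimizers, and the linearized decay all require substantial work, and are precisely where the Hölder hypothesis on $Q$ is used.
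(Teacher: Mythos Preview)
The paper does not prove this theorem: it is stated as background and attributed to Alt--Caffarelli \cite{alt-caf}, with no argument given.  So there is no ``paper's own proof'' to compare against; you are effectively sketching the original \cite{alt-caf} arguments.

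As a sketch of those arguments your outline is broadly faithful.  A few places are thinner than they look.  In (2), the route from the inner-variation identity to the pointwise statement $q=Q$ $\haus^{n-1}$-a.e.\ needs the fact that the reduced free boundary has full $\haus^{n-1}$-measure in $\partial\fb$ (equivalently, that $\{u>0\}$ is a set of finite perimeter with density $1/2$ at $\haus^{n-1}$-a.e.\ boundary point); this is a nontrivial step in \cite{alt-caf} that precedes the identification of $q$.  Your Vitali argument for $\haus^{n-1}$-finiteness is the right idea but actually yields Ahlfors regularity of the measure $\Delta u$, and one then has to transfer this to $\haus^{n-1}\llcorner\partial\fb$ via non-degeneracy.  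In (4) you only address the lower bound; the upper bound $u(y)\leq c\,d(y,\partial\fb)$ is simply Lipschitz continuity plus $u=0$ on $\partial\fb$.  Finally, your description of (5) conflates two distinct ingredients in \cite{alt-caf}: the inhomogeneous blow-up/compactness that upgrades weak flatness to ``flat in the Lipschitz sense,'' and the subsequent PDE/Schauder iteration that gives $C^{1,\beta}$; the classification of global minimizers with planar boundary is not how the original argument proceeds (that classification is itself a consequence of the $\eps$-regularity, not an input to it).
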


As suggested by Theorem \ref{thm:altcaf}(4), the natural scaling for $(\star)$ is graph dilation.  In other words, if we define
\[
u_{y, r}(x) = r^{-1} u(y + rx),
\]
then $u$ solves $(\star_{B_r(y), Q})$ if and only if $u_{y, r}$ solves $(\star_{B_1(0), r Q_{y, r}})$.  Notice that $Q$ scales like $Du$, and in particular as $r \to 0$ we have $[r Q_{y, r}]_{\alpha, B_1(0)} \to 0$.  So at small scales $u$ looks like a solution to $(\star)$ with constant $Q$.


An essentially direct consequence of Theorem \ref{thm:altcaf}, Alt-Caffarelli proved the following compactness Theorem (see also Theorem 9.1 in \cite{davidtoroalmostminimizers}). 

\begin{theorem}[Compactness \cite{alt-caf}]\label{thm:compactness}
Let $u_i$ be a sequence of solutions to $(\star_{B_{r_i}(0), Q_i})$, with $0 \in \partial\{ u_i > 0 \}$ for every $i$.  Suppose $r_i \to r \in (0, \infty]$, and  
\[
0 < \inf_i \min_{D} Q_i \leq \sup_i \max_{D} Q_i < \infty \quad \forall D \subset\subset B_r(0),
\]
and $Q_i \to Q$ in $C^0_{loc}(B_r(0))$.

Then there is a subsequence $i'$, and Lipschitz function $u_\infty \in W^{1,\infty}_{loc}(B_r(0))$, so that $u_{i'} \to u$ in the following senses:
\begin{enumerate}
\item $u_{i'} \to u_\infty$ in $C^{\alpha}_{loc}$ for every $0 \leq \alpha < 1$, and in $L^2_{loc}$, 

\item $D u_{i'} \to Du_\infty$ weak-$\star$-ly in $L^\infty_{loc}$, and strongly in $L^2_{loc}$, 

\item $\partial \{ u_{i'} > 0 \} \to \partial \{ u_\infty > 0 \}$ in the local Hausdorff distance,

\item $1_{\{u_{i'} > 0 \}} \to 1_{\{ u_\infty > 0 \}}$ in $L^1_{loc}$.
\end{enumerate}

Moreoever, $u_\infty$ solves $(\star_{B_r(0), Q})$ on compact sets, in the sense that for every $D \subset\subset B_r(0)$, we have
\[
\int_D |Du_\infty|^2 + Q^2 1_{\{ u_\infty > 0 \}} \leq \int_D |Dv|^2 + Q^2 1_{\{v > 0\}} \quad \forall v \text{ s.t. } u - v \in W^{1,2}_0(D).
\]
\end{theorem}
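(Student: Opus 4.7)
The plan is to establish the convergence statements in a natural order, using the quantitative regularity of Theorem~\ref{thm:altcaf} at every turn. My first step is to extract a limit: because $0 \in \partial\{u_i > 0\}$ and $2\rho < r_i$ for every $\rho < r$ and all large $i$, the Lipschitz bound of Theorem~\ref{thm:altcaf}(3) gives $\|Du_i\|_{L^\infty(B_\rho(0))} \leq c(n)\Lambda$, with $\Lambda$ uniform on compact subsets of $B_r(0)$ by hypothesis. Since $u_i(0) = 0$, Arzel\`a--Ascoli produces a subsequential locally uniform limit $u_\infty$; interpolating with the uniform Lipschitz bound yields $C^\alpha_{loc}$ convergence for every $\alpha \in [0,1)$, and dominated convergence upgrades this to $L^2_{loc}$. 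The uniform $L^\infty$ bound on $Du_i$ simultaneously gives weak-$\star$ convergence of $Du_i$ in $L^\infty_{loc}$, with the limit identified as $Du_\infty$ by pairing against $C^\infty_0$ test functions.

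My second step is Hausdorff convergence of the free boundaries, which rests on non-degeneracy (Theorem~\ref{thm:altcaf}(4)). If $x_i \in \partial\{u_i > 0\}$ converges to $x$, uniform convergence gives $u_\infty(x) = 0$ and the lower bound $\sup_{B_\rho(x_i)} u_i \geq c\rho$ passes to the limit, forcing $x \in \partial\{u_\infty > 0\}$. Conversely, if $x \in \partial\{u_\infty > 0\}$ but $\partial\{u_i > 0\}$ stays a definite distance $\delta$ from $x$ along some subsequence, then on $B_\delta(x)$ each $u_i$ is either strictly positive (hence harmonic) or identically zero. The first case makes $u_\infty$ harmonic on $B_\delta(x)$; in either case $u_\infty \geq 0$ and $u_\infty(x) = 0$ force $u_\infty \equiv 0$ on $B_\delta(x)$ by the strong maximum principle, contradicting $x \in \partial\{u_\infty > 0\}$. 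Given Hausdorff convergence, $L^1_{loc}$ convergence of $1_{\{u_i > 0\}}$ is automatic once one knows $|\partial\{u_\infty > 0\}| = 0$, which follows from non-degeneracy plus the uniform Lipschitz bound: outside a thin tube around $\partial\{u_\infty > 0\}$ the characteristic functions agree for large $i$.

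The main obstacle is the last step: promoting weak $L^2$ convergence of $Du_i$ to strong convergence and verifying that $u_\infty$ is itself a minimizer. My plan is the classical test-function swap. Fix $D \subset\subset D' \subset\subset B_r(0)$, choose $\phi \in C^\infty_0(D')$ with $\phi \equiv 1$ on $D$, and given a competitor $v$ with $u_\infty - v \in W^{1,2}_0(D')$ set
\[
v_i = u_i + \phi(v - u_\infty), \qquad u_i - v_i \in W^{1,2}_0(D').
\]
Inserting $v_i$ into the minimality inequality for $u_i$, expanding $|Dv_i|^2$, and using uniform convergence of $u_i$ and $Q_i$, the $L^1$ convergence of $1_{\{u_i > 0\}}$, and weak $L^2$ convergence of $Du_i$, I pass to the limit. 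Applied first with $v = u_\infty$ this yields $\limsup_i \int_D |Du_i|^2 \leq \int_D |Du_\infty|^2$, which combined with weak lower semicontinuity upgrades weak to strong $L^2_{loc}$ convergence of gradients. Applied to a general $v$ it produces the minimality of $u_\infty$. The delicate analytic point is the measure term $\int Q_i^2 1_{\{v_i > 0\}}$: on $D$, $v_i = v + (u_i - u_\infty) \to v$ uniformly, so $\liminf 1_{\{v_i > 0\}} \geq 1_{\{v > 0\}}$ pointwise, while on the transition region $D' \setminus D$ one controls the contribution by a careful choice of $\phi$, as carried out in \cite{alt-caf, davidtoroalmostminimizers}.
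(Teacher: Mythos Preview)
The paper does not give its own proof of this theorem; it is quoted from \cite{alt-caf}, and the accompanying Remark only sketches the one piece not found there, namely strong $L^2_{loc}$ convergence of the gradients: away from $\partial\{u_\infty>0\}$ the $u_i$ are harmonic or identically zero and converge smoothly, while in a $\delta$-tube around $\partial\{u_\infty>0\}$ the uniform bound $|Du_i|\le C$ together with $|B_\delta(\partial\{u_\infty>0\})| = O(\delta)$ controls the remainder. Your outline for parts (1), (3), (4) and for the minimality of $u_\infty$ is the standard one and is fine, and it is reasonable to defer the measure-term technicalities to \cite{alt-caf,davidtoroalmostminimizers}.

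There is, however, a genuine gap in your derivation of strong $L^2_{loc}$ convergence. With the competitor you wrote, $v_i = u_i + \phi(v-u_\infty)$, taking $v=u_\infty$ gives $v_i \equiv u_i$, so the minimality inequality for $u_i$ is an identity and yields nothing; in particular it cannot produce $\limsup_i \int_D |Du_i|^2 \le \int_D |Du_\infty|^2$. The competitor that does this job is $v_i = (1-\phi)u_i + \phi u_\infty = u_i + \phi(u_\infty - u_i)$: on $D$ it equals $u_\infty$, so minimality of $u_i$ gives
\[
\int_D |Du_i|^2 + Q_i^2 1_{\{u_i>0\}} \le \int_D |Du_\infty|^2 + Q_i^2 1_{\{u_\infty>0\}} + \text{(transition on }D'\setminus D\text{)},
\]
and the transition is small because $u_i-u_\infty\to 0$ uniformly while $Du_i$ stays bounded. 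Either fix the competitor in this way, or bypass it via the smooth-away/bounded-near argument the paper's Remark indicates.

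A smaller point: for the measure term in the minimality step you record $\liminf 1_{\{v_i>0\}} \ge 1_{\{v>0\}}$, but since this term sits on the \emph{right} of the inequality you actually need the opposite direction, $\limsup \int Q_i^2 1_{\{v_i>0\}} \le \int Q^2 1_{\{v>0\}}$. Uniform convergence $v_i\to v$ on $D$ gives this only off $\{v=0\}$, so one must argue (as in the references you cite) that competitors with $|\{v=0\}\cap D|>0$ can be handled by approximation.
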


\begin{remark}
All the properties except strong $W^{1,2}_{loc}$ convergence are stated and proven in \cite{alt-caf}.  The strong $L^2_{loc}$ convergence of $Du_i$ is essentially a direct consequence of smooth convergence away from the free-boundaries, and boundedness of the $Du_i$ (see Theorem 9.1 in \cite{davidtoroalmostminimizers} for the full details in the almost-minimizers setting).
\end{remark}

\begin{remark}\label{rem:sing-no-die}
A simple but important Corollary of $\eps$-regularity is that singular points do not disappear under limits: if $u_i \to u$ as in Theorem \ref{thm:compactness}, and if $x_i \in \sing(u_i)$, $x_i \to x \in B_r(0)$, then $x \in \sing(u)$ also.

\end{remark}

\vspace{2.5mm}

In \cite{weiss:partial-reg} Weiss discovered a monotone quantity, which enforces a $1$-homogeneity in blow-ups.  The \emph{Weiss density} of $u$ in $B_r(y)$ is defined to be
\begin{equation}\label{eqn:weissmonodef}
W_r(Q, u, y) = r^{-n} \int_{B_r(y)} |Du|^2 dx + Q^2(y) r^{-n} |\fb\cap B_r(y)| - r^{-n-1} \int_{\partial B_r(y)} u^2 d\sigma.
\end{equation}
Notice in \eqref{eqn:weissmonodef} we are \emph{fixing} the value of $Q$ at $y$.  When unambiguous we may write $W_r(x)$ for $W_r(Q, u, x)$.

The density $W$ is scale-invariant, in the following sense:
\begin{equation}\label{eqn:wrescaled}
W_r(Q, u, y) = W_1(rQ_{y, r}, u_{y, r}, 0).
\end{equation}
Let us also remark that from Theorem \ref{thm:altcaf}, $W$ is controlled by $|Q|_{C^0}$ and $d(\cdot, \partial\Omega)$.  Precisely, whenever $x \in \partial \fb$ and $B_{5r}(x) \subset\Omega$, then
\begin{equation}\label{eqn:wcontrolled}
\sup_{y \in B_{2r}(x)} W_{2r}(Q, u, y) \leq c(n)(1 + |Q|_{C^0(B_{5r}(y))}).
\end{equation}

Weiss demonstrated the following monotonicity.
\begin{theorem}[Monotonicity \cite{weiss:partial-reg}]\label{thm:weiss-monotonicity}
Let $u$ solve $(\star_{\Omega, Q})$.  Given any $x \in \partial \fb \cap \Omega$, then for any $0 < s < r < R < d(x, \partial\Omega)$, we have
\begin{equation}\label{eqn:wdefect}
\int_{B_r(x) \setminus B_s(x)} \frac{ |u - (y-x)\cdot Du|^2}{|y - x|^{n+2}} dy \leq W_r(u, x) - W_s(u, x) + c_0(n,\alpha) r^\alpha [Q]_{\alpha, B_R(x)}.
\end{equation}

In particular, when $Q$ is constant then $W$ is increasing in $r$, and strictly increasing unless $u$ is $1$-homogenous.
\end{theorem}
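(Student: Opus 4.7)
Translate $x$ to the origin and write $W(\rho) = W_\rho(Q,u,0)$. The plan is to establish the differential monotonicity
\[
W'(\rho) \;\geq\; \frac{2}{\rho^{n+2}} \int_{\partial B_\rho} (u - y\cdot Du)^2\,d\sigma \;-\; C(n)(1+|Q|_{C^0})[Q]_{\alpha,B_R}\,\rho^{\alpha-1}
\]
for a.e.\ $\rho \in (0,R)$, and then integrate from $s$ to $r$, converting the boundary integral of the error term into a solid integral via polar coordinates
\[
\int_{B_r\setminus B_s} \frac{|u-y\cdot Du|^2}{|y|^{n+2}}\,dy \;=\; \int_s^r \frac{1}{\rho^{n+2}}\int_{\partial B_\rho} (u-y\cdot Du)^2\,d\sigma\,d\rho,
\]
which (up to absorbing a harmless factor of 2 into the $W_r - W_s$ term) yields \eqref{eqn:wdefect}.

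First I would differentiate each of the three pieces of $W(\rho)$ in $\rho$ using the coarea formula. This is a straightforward computation producing interior and boundary integrals of $|Du|^2$, $Q(0)^2 1_{\fb}$, and $u^2$, together with a term $-2\rho^{-n-1}\int_{\partial B_\rho} u\,\partial_\nu u$ from differentiating the normalized $L^2$-norm on the sphere. Next, I would use the interior Euler-Lagrange equation $\Delta u=0$ on $\fb$, which combined with the fact that $u \equiv 0$ on $\partial\fb$ gives $\int_{B_\rho} u\,\Delta u = 0$ distributionally, hence
\[
\int_{B_\rho}|Du|^2 \;=\; \int_{\partial B_\rho} u\,\partial_\nu u\,d\sigma.
\]

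The key input is a Pohozaev-type identity obtained from inner variations. For a smooth cutoff $\psi_\delta \to 1_{[0,\rho]}$, insert $\eta(y)=\psi_\delta(|y|)y$ into $\phi_t(y)=y+t\eta(y)$ and pass to the limit in $\frac{d}{dt}\big|_{t=0}J(u\circ\phi_t^{-1}) = 0$. If $Q$ were constant equal to $Q(0)$, this would yield the clean identity
\[
(n-2)\!\int_{B_\rho}\!|Du|^2 + nQ(0)^2|\fb\cap B_\rho| \;=\; \rho\!\int_{\partial B_\rho}\!|Du|^2 + \rho Q(0)^2\haus^{n-1}(\fb\cap\partial B_\rho) - 2\rho\!\int_{\partial B_\rho}(\partial_\nu u)^2.
\]
Substituting this together with the harmonicity identity into $W'(\rho)$, the $Q(0)^2$ terms cancel and the interior $|Du|^2$ terms combine with $-\frac{2}{\rho}\int_{B_\rho}|Du|^2$ to produce exactly $\frac{2}{\rho^{n+2}}\int_{\partial B_\rho}(u-y\cdot Du)^2 d\sigma$ after completing the square on $\partial B_\rho$ with $u$, $y\cdot Du=\rho\,\partial_\nu u$.

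The main obstacle is the Pohozaev identity when $Q$ is only $C^\alpha$: $J(u\circ \phi_t^{-1})$ is not classically differentiable in $t$ because of the $y$-dependence of $Q^2$. I would handle this by comparing $u$ to a minimizer of the frozen functional $\tilde J(v) = \int |Dv|^2 + Q(0)^2 1_{\{v>0\}}$. Since $|Q^2(y)-Q(0)^2| \leq 2|Q|_{C^0}[Q]_\alpha |y|^\alpha$, the minimizer $u$ of $J$ is an almost-minimizer for $\tilde J$ on every ball $B_\rho \subset B_R$ with error $C|Q|_{C^0}[Q]_\alpha \rho^{n+\alpha}$. Applying the inner-variation identity to $\tilde J$ (for which it is classical) and absorbing the discrepancy as a second-order error in $\rho$ produces the Pohozaev identity above with an additional term bounded by $C(n)|Q|_{C^0}[Q]_\alpha\rho^{n+\alpha}$. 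After dividing by $\rho^{n+1}$ and integrating this error from $s$ to $r$, it contributes at most $c_0(n,\alpha)r^\alpha[Q]_{\alpha,B_R}$, which is exactly the error appearing in \eqref{eqn:wdefect}. The final claim that $W$ is strictly monotone unless $u$ is $1$-homogeneous when $Q$ is constant follows immediately since the error vanishes and the defect $(u-y\cdot Du)^2$ vanishes identically iff $u$ is $1$-homogeneous from the origin.
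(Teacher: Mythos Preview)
The paper does not prove this theorem --- it is quoted from Weiss \cite{weiss:partial-reg} --- so there is no in-paper argument to compare to. Your overall plan (differentiate $W$, combine harmonicity with a Pohozaev identity, complete the square on $\partial B_\rho$, integrate) is the standard one and is correct when $Q$ is constant.

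The gap is in your treatment of H\"older $Q$. You observe that $u$ is an almost-minimizer of the frozen functional $\tilde J$ with additive error $E=C|Q|_{C^0}[Q]_\alpha\rho^{n+\alpha}$, and then claim this yields the Pohozaev identity for $\tilde J$ up to an error of the same size. It does not. Almost-minimality says $\tilde J(u)\le \tilde J(u\circ\phi_t^{-1})+E$ with $E$ \emph{independent of $t$}; writing $\tilde J(u\circ\phi_t^{-1})-\tilde J(u)=t\tilde A+O(t^2)$ and using both signs of $t$ only gives $|\tilde A|\le E/|t|+O(|t|)$, hence at best $|\tilde A|\lesssim\sqrt{E}$. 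After dividing by $\rho^{n+1}$ this produces $\rho^{(\alpha-n)/2-1}$, which is not integrable near $0$ and cannot yield the error term $c_0 r^\alpha[Q]_\alpha$.

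What actually works is to track the $t$-dependence of the discrepancy:
\[
\bigl(J(v_t)-J(u)\bigr)-\bigl(\tilde J(v_t)-\tilde J(u)\bigr)=\int_{B_\rho}(Q^2(y)-Q^2(0))\bigl(1_{\{v_t>0\}}-1_{\{u>0\}}\bigr),
\]
which is bounded by $C[Q]_\alpha\rho^\alpha\cdot\bigl|\{v_t>0\}\triangle\{u>0\}\bigr|$. Since $\{v_t>0\}=\phi_t(\{u>0\})$ and $\partial\{u>0\}$ is Ahlfors-regular (Theorem~\ref{thm:altcaf}), the symmetric difference is $O(|t|\rho^{n})$, giving a \emph{$t$-linear} error $C|t|[Q]_\alpha\rho^{n+\alpha}$ and hence the desired bound $|\tilde A|\le C[Q]_\alpha\rho^{n+\alpha}$. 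Alternatively (and this is Weiss's original route), bypass Pohozaev entirely: compare $u$ on $B_\rho$ to the $1$-homogeneous extension of $u|_{\partial B_\rho}$ as a single competitor in $J$, then freeze $Q$ with the same $O([Q]_\alpha\rho^{n+\alpha})$ error; this gives $W'(\rho)\ge \rho^{-n-2}\int_{\partial B_\rho}(u-y\cdot Du)^2-C[Q]_\alpha\rho^{\alpha-1}$ directly, with no differentiability-in-$t$ issues.
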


From (almost-)monotonicity we have a well-defined notion of density of a point:
\[
W_0(Q, u, x) := \lim_{r\to 0} W_r(Q, u, x).
\]
Moreover, $W_0$ is upper-semi-continuous in the following sense: if $u_i$, $u$ are solutions to $(\star_{\Omega_i, Q_i})$ with uniform $C^\alpha$ bounds on $Q_i$, and $u_i \to u$ as in Theorem \ref{thm:compactness}, and $x_i \to x$, and $r_i \to 0$, then
\[
W_0(Q, u, x) \geq \limsup_i W_{r_i}(Q_i, u_i, x_i) \geq \limsup_i W_0(Q_i, u_i, x_i).
\]
Of course, for any fixed positive radius $r > 0$, with $B_r(x) \subset\subset \Omega$, $W_r$ is continuous with respect to this convergence:
\[
W_r(Q, u, x) = \lim_{i \to \infty} W_r(Q_i, u_i, x_i).
\]

Monotonicity and compactness gives the existence of $1$-homogenous ``blow-ups'' (or ``tangent solutions") at any point in the free-boundary.  If $x_0 \in \partial \fb \cap \Omega$, and $r_i \to 0$ is some sequence, then (after passing to a subsequence) we have convergence $u_{x_0, r_i} \to u_\infty$ as in Theorem \ref{thm:compactness}.  We call a $u_\infty$ obtained in this fashion the \emph{blow-up} at $x_0$.

The resulting $u_\infty$ will be non-zero, one-homogenous, and solve $(\star_{\R^n, Q(x_0)})$, in the sense that for every ball $B_\rho(y)$, 
\[
\int_{B_\rho(y)} |Du_\infty|^2 + Q(x_0)^2 1_{\{u_\infty > 0\}} \leq \int_{B_\rho(y)} |Dv|^2 + Q(x_0)^2 1_{\{ v> 0\}}  \quad \forall v \text{ s.t. } u - v \in W^{1,2}_0(B_\rho(y)).
\]
Homogeneity follows from Weiss's monotonicity, since $Q_\infty \equiv Q(x_0)$ is constant, and
\[
W_R(Q(x_0), u_\infty, 0) = \lim_{r_i \to 0} W_R(r_i Q_{x, r_i}, u_{x, r_i}, 0) = W_0(Q, u, x) \quad \forall R > 0.
\]
Notice that $\partial\{u_\infty > 0\}$ will be an $(n-1)$-cone.  We also remark that it is an open problem whether blow-ups are unique in general; of course at regular points $u_\infty$ is unique, and up to rotation is equal to 
\[
u_\infty(x) = Q(x_0)^2 \max\{ x_n, 0 \}.
\]

An easy calculation shows that for blow-ups $u_\infty$ as above, the Weiss density of $u_\infty$ at $0$, and hence the density of $u$ at $x_0$, is equal to 
\[
W_0(Q, u, x_0) \equiv W_1(Q(x_0), u_\infty, 0) = Q(x_0)^2 | \{ u_\infty > 0\} \cap B_1(0) | .
\]
It was observed by Beckner-Kenig-Pipher (in unpublished work, see also \cite{cafken}) that Faber-Krahn in the sphere and $\eps$-regularity imply that
\[
W_0(Q, u, x_0) \geq Q(x_0)^2 \frac{\omega_n}{2},
\]
with equality if and only if $x_0$ is regular.  A straightforward contradiction argument, using Theorem \ref{thm:compactness} on $1$-homogenous solutions, can rephrase $\eps$-regularity of Alt-Caffarelli in terms of density.

\begin{proposition}[\cite{alt-caf} with \cite{weiss:partial-reg}]\label{prop:reg-by-W}
There is an $\eps_0(n) > 0$ so that the following holds: if $u$ solves $(\star_{\Omega, Q})$, then $x \in \partial \fb \cap \Omega$ is regular if and only if $W_0(Q, u, x) < Q(x)^2 \frac{\omega_n}{2} (1 + \eps_0)$.
\end{proposition}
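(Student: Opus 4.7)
The ``only if'' direction should be immediate. At a regular point $x$ the free boundary is a $C^{1,\beta}$ graph, so the blow-up $u_\infty$ of $u$ at $x$ is unique and equals, after rotation, $u_\infty(z) = Q(x)\max\{z_n,0\}$. Plugging this half-plane solution into \eqref{eqn:weissmonodef} with $Q\equiv Q(x)$ and using $\int_{\partial B_1} z_n^2\, d\sigma = \omega_n$, I would compute $W_0(Q,u,x) = W_1(Q(x),u_\infty,0) = Q(x)^2\omega_n/2$, which is strictly less than $Q(x)^2\omega_n(1+\eps_0)/2$ for any $\eps_0>0$.

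For the ``if'' direction I would argue by contradiction and compactness. Assume no such $\eps_0(n)$ exists, so for each $i$ there are solutions $u_i$ of $(\star_{\Omega_i,Q_i})$ and singular points $x_i\in\sing(u_i)$ with
\[
Q_i(x_i)^2\frac{\omega_n}{2} \leq W_0(Q_i,u_i,x_i) < Q_i(x_i)^2\frac{\omega_n}{2}\left(1+\frac{1}{i}\right),
\]
the lower bound being the Beckner--Kenig--Pipher inequality recalled just before the Proposition. After normalization I may assume $Q_i(x_i)\to Q_\infty\in(0,\infty)$, and my plan is to employ a \emph{two-step blow-up}. First, for each fixed $i$, extract a blow-up $u_{i,\infty}$ of $u_i$ at $x_i$; by the paragraph on blow-ups in the excerpt, $u_{i,\infty}$ is a $1$-homogeneous solution of $(\star_{\R^n,Q_i(x_i)})$ satisfying $W_R(Q_i(x_i),u_{i,\infty},0) = W_0(Q_i,u_i,x_i)$ for every $R>0$, and by Remark \ref{rem:sing-no-die} the origin lies in $\sing(u_{i,\infty})$. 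Then I would apply Theorem \ref{thm:compactness} to $\{u_{i,\infty}\}_i$ to extract a further subsequential limit $u_\infty$ solving $(\star_{\R^n,Q_\infty})$. Continuity of $W_R$ at fixed positive radius under this convergence (noted in the excerpt) combined with the squeeze above gives
\[
W_R(Q_\infty,u_\infty,0) = \lim_i W_R(Q_i(x_i),u_{i,\infty},0) = \lim_i W_0(Q_i,u_i,x_i) = Q_\infty^2\frac{\omega_n}{2}
\]
for every $R>0$. Hence $u_\infty$ is $1$-homogeneous with $W_0(Q_\infty,u_\infty,0)=Q_\infty^2\omega_n/2$, and the rigidity half of BKP (equality in Faber--Krahn on $S^{n-1}$) forces $u_\infty$ to equal, after rotation, $Q_\infty\max\{z_n,0\}$. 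Thus $0$ is a regular point of $u_\infty$. But by Remark \ref{rem:sing-no-die} applied to the convergence $u_{i,\infty}\to u_\infty$, we also have $0\in\sing(u_\infty)$; contradiction.

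\textbf{Main obstacle.} The subtle point, I expect, will be securing the \emph{equality} $W_0(Q_\infty,u_\infty,0)=Q_\infty^2\omega_n/2$ rather than merely an inequality. A naive single-stage blow-up of the sequence $\{u_i\}$ would not suffice, because $W_0$ is only upper semicontinuous under the convergence of Theorem \ref{thm:compactness} and only the lower bound on $W_0(u_\infty,0)$ is free. The two-step device circumvents this by passing first to $1$-homogeneous intermediates, for which $W_R$ is independent of $R>0$, so that passage to the compactness limit uses only \emph{honest} continuity of $W_R$ at a fixed positive radius. Everything else---existence of blow-ups, persistence of singular points, and the Faber--Krahn rigidity underlying BKP---is either stated in the excerpt or standard.
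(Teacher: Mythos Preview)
Your proposal is correct and matches the paper's approach. The paper does not give a detailed proof but only indicates, in the sentence preceding the Proposition, that it follows by ``a straightforward contradiction argument, using Theorem~\ref{thm:compactness} on $1$-homogenous solutions''; your two-step blow-up (pass first to $1$-homogeneous blow-ups $u_{i,\infty}$, then take a compactness limit of those) is exactly what this hint means, and your identification of the key technical point---that working with $1$-homogeneous intermediates upgrades upper-semicontinuity of $W_0$ to genuine continuity of $W_R$ at fixed $R$---is precisely why the paper specifies ``on $1$-homogeneous solutions.''
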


\subsection{Quantative stratification}\label{section:strata}

We recall the notion of the $k$-dimensional strata, and define the effective $k$-strata.  The technique of quantitative stratification was first introduced by Cheeger-Naber \cite{che-na} to study manifolds with Ricci curvature bounded from below, and has subsequently been used in a variety of geometric and analytic contexts.

Each $k$-stratum captures where $u$ looks at most $k$-dimensional on an infinitesimal scale, or equivalently, where $u$ doesn't look $(k+1)$-dimensional at infinitesimal scales.  The effective $k$-strata consider regions where $u$ looks quantitatively far from $(k+1)$-dimensional, on a local scale.  Enforcing structure on $u$ at a fixed positive scale is crucial to obtaining effective estimates.

\begin{definition}
A function $u$ is \emph{$k$-symmetric} if $u$ is $1$-homogeneous about some point (i.e. $u(x + r\xi) = ru(x + \xi)$ for all $\xi \in \mathbb S^{n-1}$), and there is a $k$-dimensional plane, $L^k$, so that
\[
u(x + v) = u(x), \quad \forall v \in L^k.
\]
\end{definition}

\begin{definition}
Given $x$, we say $u$ is \emph{$(k, \eps)$-symmetric} in $B_r(x)$ if
\[
r^{2-n} \int_{B_r(x)} |u - \tilde u|^2\ dy < \eps,
\]
for some $k$-symmetric $\tilde u$.
\end{definition}

\begin{definition}\label{stratadef}
Let $u$ solve $(\star_{\Omega, Q})$.  The \emph{$k$-stratum}, $S^k(u)$, is the set of points $x \in \partial\{u > 0\} \cap \Omega$ for which every blow-up at $x$ is at most $k$-symmetric.

\vspace{1mm}
We define \emph{$(k, \eps)$-stratum}, $S^k_\eps(u)$, to be the set of points $x \in \partial\{u > 0\} \cap \Omega$ for which $u$ is \underline{not} $(k+1,\eps)$-symmetric in $B_s(x)$, for every $0 < s \leq \min\{1, d(x, \partial\Omega)\}$.

\vspace{1mm}
We will further quantify the strata by defining the \emph{$(k, \eps, r)$-stratum} $S^k_{\eps, r}(u)$ to be the set of points $x \in \partial \fb \cap \Omega$ for which $u$ is not $(k+1, \eps)$-symmetric in $B_s(x)$, for every $r \leq s \leq \min\{1, d(x, \partial\Omega)\}$.
\end{definition}

It is instructive to make some simple observations concerning the above definition.
\begin{enumerate}[label=(\roman*)]
\item The $k$-strata satisfy
\[
S^0 \subset \ldots \subset S^{n-2} \subset S^{n-1} = S^n = \partial \fb \cap \Omega,
\]
where the penultimate equality is due to non-degeneracy of solutions.  Clearly, we also have
\[
reg(u) \subset S^{n-1} \setminus S^{n-2}.
\]

\item\label{itm:inclusions} If $\delta < \eps$ and $r < s$, then $S^k_{\eps, r} \subset S^k_{\delta, s}$.  If $j < k$, then $S^j_{\eps, r} \subset S^k_{\eps, r}$.

\item\label{itm:unions} We have $S^k_\eps = \cap_{r > 0} S^k_{\eps, r}$, and $S^k= \cup_{\eps > 0} S^k_{\eps}$ by an easy contradiction argument.

\item\label{itm:closure} Unlike the $S^k$, the $S^k_\eps$ are closed, in both $x$ and $u$.  In other words, if $u_i \to u$ in $L^2$, and $x_i \to x$, with $x_i \in S^k_{\eps}(u_i)$, then $x \in S^k_{\eps}(u)$.

\item\label{itm:limits} If $u_i \to u$ in $L^2$, and each $u_i$ is $(k, \eps_i)$-symmetric in $B_1(0)$, with $\eps_i \to 0$, then $u$ is $k$-symmetric in $B_1(0)$.
\end{enumerate}

The main utility of the $k$-strata has been the dimension bound due to Almgren-Federer, adapted to the free-boundary setting by Weiss:
\[
\dim(S^k(u)) \leq k.
\]
The fundamental observation behind dimension reducing is the following: if $u$ is $1$-homogenous, and $k$-symmetric along the $k$-plane $L^k$, then any blow-up away from $L^k$ will be $(k+1)$-symmetric.

Demonstrating an inclusion $\sing(u) \subset S^k(u)$ gives directly a dimension bound on the singular set.  Recalling that $k^*$ was the first dimension admitting non-trivial, $1$-homogenous solutions to $(\star_{\R^{k^*}, 1})$, we have by $\eps$-regularity that $\sing(u) \subset S^{n-k^*}(u)$.

In fact, the $\eps$-regularity of Alt-Caffarelli allows us to demonstrate $\sing(u) \subset S^{n-k^*}_\eps(u)$, for some positive $\eps$.  This is essentially due to the fact whenever the free-boundary looks flat \emph{at any scale}, then it is regular.
\begin{proposition}\label{prop:singsetinsidestrata}
There is an $\eps(n, \Lambda, \alpha)$ so that if $u$ solves $(\star_{B_3(0), Q})$ with $|Q + 1/Q|_{C^0(B_3(0))} \leq \Lambda$ and $[Q]_{\alpha, B_3(0)} \leq \Lambda$, then $\sing(u) \cap B_1(0) \subset S^{n-k^*}_\eps(u)$.
\end{proposition}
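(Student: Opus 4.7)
The plan is a compactness-plus-dimension-reduction argument. Suppose no such $\eps$ exists; then for each $i$ we extract a minimizer $u_i$ of $(\star_{B_3(0), Q_i})$ satisfying the hypotheses, a singular point $x_i \in \sing(u_i) \cap B_1(0)$, and a scale $s_i \in (0, 1]$ so that $u_i$ is $(n-k^*+1, 1/i)$-symmetric in $B_{s_i}(x_i)$. After passing to subsequences, we may assume $x_i \to x_\infty \in \bar B_1$, $s_i \to s_\infty \in [0,1]$, and (by Arzel\`a--Ascoli on $C^\alpha$) $Q_i \to Q_\infty$ uniformly on $\bar B_3$.

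I first rescale at the singular scale: set $v_i(y) = s_i^{-1} u_i(x_i + s_i y)$, which minimizes $(\star_{B_{2/s_i}(0), \tilde Q_i})$ with $\tilde Q_i(y) = Q_i(x_i + s_i y)$ (we have room $2/s_i \geq 2$ since $x_i \in B_1$ and $s_i \leq 1$). The weights $\tilde Q_i$ inherit $1/\Lambda \leq \tilde Q_i \leq \Lambda$ and $[\tilde Q_i]_\alpha = s_i^\alpha [Q_i]_\alpha \leq \Lambda$, so Theorem \ref{thm:compactness} gives, along a further subsequence, $v_i \to v_\infty$ on $B_{2/s_\infty}(0)$ (or on all of $\R^n$ if $s_\infty = 0$), with $v_\infty$ a minimizer for the limit weight $\tilde Q_\infty$. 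By Remark \ref{rem:sing-no-die}, $0 \in \sing(v_\infty)$. Since the $(k,\eps)$-symmetry condition is scale invariant under graph dilation, $v_i$ is still $(n-k^*+1, 1/i)$-symmetric in $B_1(0)$, and by observation (\ref{itm:limits}) $v_\infty$ agrees on $B_1$ with some $(n-k^*+1)$-symmetric function; in particular $v_\infty$ is invariant on $B_1(0)$ along some linear $(n-k^*+1)$-plane $L$.

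Next I take a blow-up $\hat v$ of $v_\infty$ at $0$. Combining Weiss's monotonicity (Theorem \ref{thm:weiss-monotonicity}) with Theorem \ref{thm:compactness} yields, along some sequence $\rho_j \to 0$, a limit $\hat v = \lim_j \rho_j^{-1} v_\infty(\rho_j \cdot)$ which is $1$-homogeneous at $0$ and minimizes $(\star_{\R^n, c})$ with constant weight $c = \tilde Q_\infty(0)$ (the rescaled H\"older seminorms $\rho_j^\alpha [\tilde Q_\infty]_\alpha$ vanish), and still $0 \in \sing(\hat v)$ by Remark \ref{rem:sing-no-die}. The invariance of $v_\infty$ along $L$ on $B_1(0)$ passes to global invariance of $\hat v$ along $L$: for $w \in L$, since $\rho_j w \in L$ and $\rho_j y, \rho_j(y+w) \in B_1(0)$ for $j$ large,
\[
\hat v(y+w) = \lim_j \rho_j^{-1} v_\infty(\rho_j y + \rho_j w) = \lim_j \rho_j^{-1} v_\infty(\rho_j y) = \hat v(y).
\]

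The proof closes by dimension reduction. Writing $\R^n = L \oplus L^\perp$, the $L$-invariance factors $\hat v(x) = w(\proj_{L^\perp} x)$ with $w : L^\perp \cong \R^{k^*-1} \to \R$ being $1$-homogeneous and itself a minimizer of $(\star_{\R^{k^*-1}, c})$. By the very definition of $k^*$, any $1$-homogeneous minimizer of $(\star)$ in dimension $k^*-1$ is necessarily a half-plane $c \max\{\nu \cdot y, 0\}$. Extending $\nu$ into $L$ by zero, $\hat v$ is itself a half-plane, so Proposition \ref{prop:reg-by-W} forces $0 \in \reg(\hat v)$, contradicting $0 \in \sing(\hat v)$. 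The main technical point I expect to require care is the slicing step: rigorously transferring minimality of $\hat v$ on $\R^n$ down to $w$ on $\R^{k^*-1}$ via a cutoff in the $L$-directions. This is a standard dimension-reduction maneuver in the spirit of Weiss \cite{weiss:partial-reg}, but the cutoff error in the translation-invariant directions must be handled with some attention.
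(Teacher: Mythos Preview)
Your argument is correct and follows essentially the same route as the paper: contradict by rescaling the offending ball $B_{s_i}(x_i)$ to unit size, pass to a compactness limit $v_\infty$ which is $(n-k^*+1,0)$-symmetric in $B_1$ with $0\in\sing(v_\infty)$, blow up at $0$ to get a $1$-homogeneous $(n-k^*+1)$-symmetric global minimizer, and invoke the definition of $k^*$ to force linearity and hence regularity of $0$, contradicting Remark~\ref{rem:sing-no-die}. Your write-up is in fact somewhat more explicit than the paper's about the passage of $L$-invariance through the blow-up and about the slicing step; the latter is indeed the standard dimension-reduction computation from \cite{weiss:partial-reg} and needs no new idea here.
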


\begin{proof}
This follows by $\eps$-regularity using a straightforward compactness argument.  Suppose, towards a contradiction, we have a sequence $u_i$ solving $(\star_{B_3(0), Q_i})$, and collections $\eps_i \to 0$, $x_i \in B_1(0)$, $r_i \in (0, 1]$ satisfying the hypotheses, but so that $u_i$ is $(n-k^* + 1, \eps_i)$-symmetric in $B_{r_i}(x_i)$.

By translation and dilation invariance, we can assume $x_i \equiv 0$, $r_i \equiv 1$, and the $u_i$ solve $(\star_{B_2(0), Q_i})$, with $|Q_i + 1/Q_i|_{C^0(B_2(0))} \leq \Lambda$ and $[Q_i]_{\alpha, B_2(0)}  \leq \Lambda$.  This last inequality follows because we are making a positive dilation (so, $[Q]_\alpha$ will only decrease).

Passing to a subsequence, we obtain convergence $Q_i \to Q$ in $C^0(B_1(0))$ and $u_i \to u$ as in Theorem \ref{thm:compactness}.  The resulting $u$ will be $(n-k^*+1, 0)$-symmetric in $B_1(0)$, and solve $(\star_{B_{1}(0), Q})$, and have $0 \in \partial \fb$.

In particular, since $u$ is $1$-homogenous at some point, any blow-up $u'$ at $0$ will be $(n-k^*+1, 0)$-symmetric also, and solve $(\star_{\R^n, Q(0)})$.  Therefore $u'$ must be linear, as otherwise we could obtain a non-linear, $1$-homogenous solution in $\R^{k^*-1}$.  We deduce $0$ is a regular point of $u$.

 By Remark \ref{rem:sing-no-die} this is a contradiction, as by assumption $0 \in \sing(u_i)$ for every $i$.
\end{proof}

Notice, though we define (almost-)symmetry in terms of $u$, in the strata we consider only points in the free-boundary.  However, almost-symmetries of $u$ will propogate to almost-symmetries of the free-boundary.  Precisely, we have the following
\begin{proposition}\label{prop:fb-almost-sym}
There is an $\eps(\eps', \Lambda, n)$ so that if $u$ solves $(\star_{B_2(0), Q})$ with $|Q + 1/Q|_{C^0(B_2(0))} \leq \Lambda$, and $0 \in \partial \fb$, and $u$ is $(k, \eps)$-symmetric in $B_1$, then $\partial \fb$ is $(k, \eps')$-symmetric in $B_1(0)$ in the following sense:

There is a cone $C^{n-k-1}$ and an affine $k$-space $L^k$ so that
\begin{equation}\label{eqn:fb-almost-sym}
d_H( B_1(0) \cap \partial \fb, B_1(0)\cap (C^{n-k-1} \times L^k)) \leq \eps.
\end{equation}
\end{proposition}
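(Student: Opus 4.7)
The plan is a standard compactness--contradiction argument. Suppose the conclusion fails; then there exist an $\eps' > 0$ and a sequence $u_i$ of solutions to $(\star_{B_2(0), Q_i})$, each satisfying $|Q_i + 1/Q_i|_{C^0(B_2)} \leq \Lambda$ and $0 \in \partial\{u_i > 0\}$, such that $u_i$ is $(k, 1/i)$-symmetric in $B_1$ yet
\[
d_H(B_1 \cap \partial\{u_i > 0\},\, B_1 \cap (C^{n-k-1} \times L^k)) > \eps'
\]
for every cone $C^{n-k-1}$ and affine $k$-plane $L^k$.

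After passing to a subsequence along which $Q_i \to Q_\infty$ in $C^0_{loc}(B_2)$ (tacitly assuming the uniform equicontinuity of the $Q_i$ needed to invoke Theorem \ref{thm:compactness}), we obtain a limit $u_\infty$ solving $(\star_{B_2(0), Q_\infty})$ with $0 \in \partial\{u_\infty > 0\}$, for which $u_i \to u_\infty$ in $L^2_{loc}(B_2)$ and $\partial\{u_i > 0\} \to \partial\{u_\infty > 0\}$ in local Hausdorff distance. Choosing $k$-symmetric approximants $\tilde u_i$ with $\int_{B_1} |u_i - \tilde u_i|^2 < 1/i$, the triangle inequality gives $\tilde u_i \to u_\infty$ in $L^2(B_1)$, and observation \ref{itm:limits} of Section \ref{section:strata} forces $u_\infty$ itself to be $k$-symmetric in $B_1$. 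Normalizing the center of $1$-homogeneity to lie on the $k$-plane of invariance $L^k$, the positivity set of $u_\infty$ inside $B_1$ takes the form $(C \times L^k) \cap B_1$ for some $(n-k-1)$-dimensional cone $C$ in the subspace orthogonal to $L^k$, so $\partial\{u_\infty > 0\} \cap B_1$ has exactly the required cone-times-plane form. Since $B_1 \subset\subset B_2$, the local Hausdorff convergence of the free boundaries then yields $d_H(B_1 \cap \partial\{u_i > 0\}, B_1 \cap (C^{n-k-1} \times L^k)) < \eps'$ for all large $i$, contradicting the choice of the $u_i$.

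The main obstacle is securing observation \ref{itm:limits} carefully: extracting a limit from the sequence of $k$-symmetric approximants $\tilde u_i$, each specified by a center of homogeneity $p_i$ and an invariance plane $L_i^k$. Normalizing $p_i \in L_i^k$ and using the uniform $L^2(B_1)$-bound on $\tilde u_i$ (inherited from the uniformly Lipschitz $u_i$, which vanish at the free-boundary point $0$) together with $1$-homogeneity prevents the centers from escaping to infinity; compactness of the Grassmannian then yields a limiting invariance plane, and continuity together with $L^2$-convergence gives the $k$-symmetry of the limit. A secondary technical point is extracting a $C^0_{loc}$-convergent subsequence of the $Q_i$ from only a uniform $L^\infty$-bound; the cleanest resolution is to read the proposition alongside an implicit H\"older bound on $Q$, which is available in all the applications within the paper.
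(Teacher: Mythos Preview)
Your proof is correct and follows essentially the same compactness--contradiction argument as the paper's own proof. The paper's version is terser---it simply invokes the compactness theorem and observation \ref{itm:limits} without further comment---whereas you have (rightly) flagged the equicontinuity issue for the $Q_i$ and sketched why the $k$-symmetric approximants admit a convergent subsequence; these are points the paper glosses over.
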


\begin{proof}
Suppose the Proposition is false.  Then we have a sequence $u_i$ solving $(\star_{B_2(0), Q_i})$, and sequence $\eps_i$, which for each $i$ satisfy the hypothesis, but fail \eqref{eqn:fb-almost-sym}.  Then passing to a subsequence, we can assume $u_i \to u$ as in Theorem \ref{thm:compactness}.

Since $u_i \to u$ in $L^2(B_1(0))$, $u$ is $(k, 0)$-symmetric in $B_1(0)$, and hence $\partial \fb \cap B_1(0) = (C^{n-k-1} \times L^k) \cap B_1(0)$ for some cone $C^{n-k-1}$, and affine $k$-space $L^k$.  But since $\partial \{ u_i > 0 \} \to \partial \fb$ in $d_H$, we obtain a contradiction.
\end{proof}

\subsection{Main results}

We now state our main Theorems.  We have $k$-dimensional packing at all scales on the $(k,\eps)$-strata.
\begin{theorem}\label{thm:main-mass}
Suppose $u$ solves $(\star_{B_5(0), Q})$ with $|Q|_{C^\alpha(B_5(0))} \leq \Lambda$, $\min_{B_5(0)} Q \geq 1/\Lambda$, and $0 \in \partial \fb$.  Then for every $\eps > 0$, $0 < r \leq 1$, and $k \in \{1, \ldots, n-1\}$, we can find a collection of balls $\{B_r(x_i)\}_i$ which satisfy
\[
S^k_{\eps, r}(u)\cap B_1(0) \subset \cup_i B_r(x_i), \quad \#\{x_i\}_i \leq c(n, \Lambda, \eps, \alpha) r^{-k}
\]

In particular, we have
\[
|B_r(S^k_{\eps, r}(u) \cap B_1(0))| \leq c(n, \eps, \Lambda, \alpha) r^{n-k}  \quad \forall 0 < r \leq 1, 
\]
and 
\[
\haus^k(S^k_\eps(u) \cap B_1(0)) \leq c(n, \eps, \Lambda, \alpha).
\]
\end{theorem}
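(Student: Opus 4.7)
The plan is to adapt the Naber-Valtorta quantitative stratification scheme to the Weiss monotone quantity, following the blueprint of \cite{naber-valtorta:harmonic}. The argument rests on three ingredients --- cone-splitting compatible with the Weiss density, an $L^2$-best approximation estimate relating the Jones $\beta^k_2$-number to the Weiss density drop, and the Naber-Valtorta discrete Reifenberg theorem --- combined in an inductive covering at dyadic scales. The volume and Hausdorff bounds then follow essentially automatically from the packing.

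\textbf{Steps 1--2: Quantitative rigidity and $\beta$-estimate.} First I would prove a cone-splitting lemma: for every $\eps > 0$ there are $\delta, \eta > 0$ (depending on $n, \Lambda, \alpha, \eps$) so that if $u$ solves $(\star_{B_3(x), Q})$, the pinching $W_2(Q,u,x) - W_\delta(Q,u,x) \leq \eta$ holds, and there exist $\eta$-linearly-independent points $y_0, \dots, y_k \in B_1(x) \cap \partial \fb$ each with Weiss density pinched to $W_0(Q,u,x)$, then $u$ is $(k,\eps)$-symmetric in $B_1(x)$. This follows by compactness (Theorem \ref{thm:compactness}), upper semi-continuity of $W_0$, and the Weiss defect \eqref{eqn:wdefect}, which force blow-ups to be simultaneously $1$-homogeneous about $x$ and translation-invariant along $\langle y_0, \dots, y_k\rangle$. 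Using this lemma, one derives the $L^2$-best approximation estimate: for any finite positive Borel measure $\mu$ supported on $\partial\fb$ and any ball $B_s(x) \subset B_2(0)$,
\[
\beta^k_{\mu,2}(x, s)^2 \leq \frac{c}{s^k} \int_{B_s(x)} \bigl( W_{\gamma s}(Q, u, y) - W_{s/\gamma}(Q, u, y) \bigr) \, d\mu(y) + c s^\alpha \frac{\mu(B_s(x))}{s^k},
\]
for some $\gamma(n) < 1$ and $c = c(n, \Lambda, \alpha)$. The additive H\"older term records the error from non-constant $Q$ in \eqref{eqn:wdefect}.

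\textbf{Step 3: Inductive covering and discrete Reifenberg.} With Steps 1--2 in hand, I would run the Naber-Valtorta inductive covering at dyadic scales $r_j = 2^{-j}$. The key dichotomy: on any ball where Weiss density is pinched between scales $r_j$ and $r_{j+N}$ (for appropriate $N = N(\eps,n,\Lambda,\alpha)$), the definition of $S^k_{\eps, r}$ forbids $(k+1)$-symmetry, so by the contrapositive of Step 1 the singular set must lie $\eta$-close to a $k$-plane --- exactly the hypothesis for applying discrete Reifenberg to the counting measure $\mu_j = \omega_k r_j^k \sum_i \delta_{x_i^j}$ on an efficient $r_j$-net, with the $\beta$-energy controlled via Step 2 by the telescoping $\sum_j (W_{r_j} - W_{r_{j+1}}) \leq c(n, \Lambda)$ from \eqref{eqn:wcontrolled}. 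Balls where pinching fails contribute a definite density drop, and can occur at most $c/\eta$ times before density approaches the regular value of Proposition \ref{prop:reg-by-W}. The H\"older defect $c_0 r_j^\alpha$ sums geometrically and is absorbed into constants. This yields the packing bound $\#\{x_i\}_i \leq c r^{-k}$ at the final scale.

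\textbf{Step 4 and main obstacle.} The volume estimate is immediate: $|B_r(S^k_{\eps, r} \cap B_1)| \leq \sum_i |B_{2r}(x_i)| \leq c r^{n-k}$. For the Hausdorff estimate, observation \ref{itm:inclusions} gives $S^k_\eps \cap B_1 \subset S^k_{\eps, r} \cap B_1$ for every $r > 0$, so the packing bound yields $\haus^k_{2r}(S^k_\eps \cap B_1) \leq c 2^k$, and sending $r \to 0$ gives $\haus^k(S^k_\eps \cap B_1) \leq c$. The main obstacle is the interplay of the $\alpha$-H\"older error in almost-monotonicity \eqref{eqn:wdefect} with the multi-scale machinery: unlike the exact monotone setting of \cite{naber-valtorta:harmonic}, one must ensure that the additive errors summed across $\log_2(1/r)$ dyadic scales remain controlled, and that the cone-splitting lemma is robust enough to tolerate them at each step. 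Translating almost-symmetry of $u$ (measured in $L^2$) back to the geometry of $\partial\fb$, which is what $\beta^k_2$ actually sees, requires Proposition \ref{prop:fb-almost-sym} and its quantitative versions.
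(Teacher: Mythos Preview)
Your proposal follows the same Naber--Valtorta framework as the paper, and the three core ingredients you identify (quantitative cone-splitting via Weiss pinching, the $L^2$ $\beta$-estimate in terms of density drop, and discrete Reifenberg) are exactly those used. The paper's Theorem \ref{thm:L2-est} is your Step~2 estimate, and Lemma \ref{lem:sym-from-drop} together with Lemma \ref{lem:dichotomy} give your Step~1.

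Where the paper differs is in the organization of Step~3. Rather than a direct dyadic induction, the paper runs a \emph{corona-type} stopping-time argument (Section \ref{section:main-const}): the Key Dichotomy (Theorem \ref{thm:dichotomy}) is phrased not as ``pinched $\Rightarrow$ near a $k$-plane'' but as ``either small drop on all of $S^k_{\eps,\eta R}$ (good), or the high-density points sit near a $(k-1)$-plane (bad).'' Good balls are handled by Lemma \ref{lem:centered-packing} (your discrete-Reifenberg step, requiring small drop \emph{at ball centers}), while bad balls get trivial $k$-packing from being $(k-1)$-dimensional. Good and bad trees are then alternated, with the small bad-tree packing constant absorbing the overlap errors. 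The paper explicitly notes this top-down approach differs from the original bottom-up construction in \cite{naber-valtorta:harmonic}; your sketch is closer to the latter. Both routes work, but the tree construction makes the reconciliation of ``small drop at centers'' with ``definite drop on the whole ball'' more transparent.

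One correction: your final paragraph invokes Proposition \ref{prop:fb-almost-sym} to pass from $L^2$ almost-symmetry of $u$ to geometry of $\partial\fb$, but this is unnecessary here. The $\beta$-estimate (Theorem \ref{thm:L2-est}) holds for \emph{any} finite Borel measure, and in the packing argument it is applied directly to the discrete packing measure $\sum r_p^k \delta_p$ supported on centers in $S^k_{\eps,R}$; no translation to free-boundary Hausdorff distance is needed. Proposition \ref{prop:fb-almost-sym} plays no role in the proof of Theorem \ref{thm:main-mass}.
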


We have rectifiability of each $k$-stratum.
\begin{theorem}\label{thm:rectifiable}
Suppose $u$ solves $(\star_{\Omega, Q})$.  Then $S^k_\eps(u)$ is rectifiable for every $\eps$, and hence each stratum $S^k(u)$ is rectifiable.
\end{theorem}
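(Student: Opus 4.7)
The plan is to deduce rectifiability of each $S^k_\eps(u)$ from the Naber--Valtorta rectifiable-Reifenberg theorem, driven by the same Weiss monotonicity defect \eqref{eqn:wdefect} that underlies Theorem \ref{thm:main-mass}. After an initial covering/rescaling and translation (using Theorem \ref{thm:altcaf}(3) to get uniform constants), it suffices to work in $B_1(0)$ with $u$ satisfying the hypotheses of Theorem \ref{thm:main-mass} and to show that $S^k_\eps(u) \cap B_1(0)$ is $k$-rectifiable; rectifiability of $S^k(u) = \bigcup_{\eps > 0} S^k_\eps(u)$ then follows by letting $\eps \to 0$.

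The central analytic ingredient is a bound on the Jones $\beta^k_{\mu,2}$-number of any finite Borel measure $\mu$ supported on $S^k_\eps \cap B_1(0)$ in terms of a Weiss density drop. The goal is an estimate of the form
\[
\beta^k_{\mu,2}(x,r)^2\, r^k \leq c \int_{B_r(x)} \bigl( W_{8r}(Q,u,y) - W_{r/8}(Q,u,y) \bigr)\, d\mu(y) + c\, r^{k+\alpha}
\]
for balls $B_r(x) \subset B_1(0)$. The defect \eqref{eqn:wdefect} says that $u - (y - \cdot)\cdot Du$ is $L^2$-small on annuli around any $y \in \spt\mu$ whose density is nearly-realized at scales between $r/8$ and $8r$; this encodes approximate $1$-homogeneity about $y$. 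A quantitative \emph{cone-splitting} statement, proved by contradiction using Theorem \ref{thm:compactness} and Proposition \ref{prop:fb-almost-sym}, then gives: if $u$ is approximately $1$-homogeneous about $k+1$ points $y_0, \ldots, y_k$ which $r$-effectively span an affine $k$-plane $V$, then $u$ is almost $k$-symmetric along $V$; and if in addition it is approximately homogeneous about a further point at distance $\sim r$ from $V$, then $u$ would be $(k+1,\eps)$-symmetric in a slightly larger ball, contradicting $y_i \in S^k_\eps$. Thus $\spt\mu$ cannot spread far from the best $k$-plane, which gives the $\beta$-bound.

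With this $\beta$-estimate I apply the Naber--Valtorta rectifiable-Reifenberg theorem to a discrete approximation of $\haus^k \llcorner S^k_\eps$ constructed inductively over dyadic scales (as in the proof of Theorem \ref{thm:main-mass}). The summability hypothesis
\[
\int_0^1 \beta^k_{\mu,2}(x, r)^2 \, \frac{dr}{r} < \infty \quad \text{for $\mu$-a.e. } x
\]
is verified by integrating the pointwise $\beta$-estimate against $\mu$, swapping the order of integration, and telescoping the Weiss density. By \eqref{eqn:wcontrolled} and Theorem \ref{thm:weiss-monotonicity},
\[
\sum_{j\geq 0}\bigl( W_{2^{-j+3}}(Q,u,y) - W_{2^{-j-3}}(Q,u,y)\bigr) \leq c(n,\Lambda) + c(n,\alpha)[Q]_{\alpha, B_5(0)}
\]
uniformly in $y \in \partial\fb \cap B_1(0)$, and the error $r^{k+\alpha}$ in the $\beta$-estimate sums absolutely. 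The rectifiable-Reifenberg theorem then produces a countable collection of $C^1$ $k$-graphs covering $\haus^k$-almost every point of $S^k_\eps$.

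The main obstacle will be a clean proof of the cone-splitting and $\beta$-to-defect comparison in the free-boundary setting: unlike the harmonic-map case, the Weiss density is only almost-monotone (with an $r^\alpha$ Hölder error), so one must carefully separate the ``true'' homogeneity defect from the $Q$-error, and one must transfer the $L^2$ almost-symmetry of $u$ to Hausdorff almost-symmetry of the free boundary via Proposition \ref{prop:fb-almost-sym} in order to compare distances of points in $\spt\mu \subset \partial\fb$ to candidate $k$-planes. Getting the constants in the cone-splitting compactness argument uniform in $|Q|_{C^\alpha}$ and $\min Q$ is the technically demanding part, but it is exactly the free-boundary analogue of the standard Naber--Valtorta argument and should go through without essential new ideas.
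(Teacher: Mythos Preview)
Your overall architecture (a $\beta$-to-density-drop estimate fed into a rectifiable-Reifenberg theorem) matches the paper's, but there is a genuine gap: the $\beta$-estimate you state is \emph{not} valid unconditionally. In the paper's Theorem \ref{thm:L2-est} the conclusion \eqref{eqn:L2-est-concl} requires that $u$ be $(0,\delta)$-symmetric in $B_{8r}(x)$; without this, the crucial lower bound \eqref{eqn:lambdal2-lower-bound} on $\sum_i \int (v_i\cdot Du)^2$ fails and there is no control on $\lambda_{k+1}$. You never explain how to arrange this hypothesis at every center $x\in S^k_\eps$ and every scale $r$, and it does \emph{not} hold in general: a point $x\in S^k_\eps$ can sit at a scale where the Weiss drop $W_{8r}(x)-W_{\gamma r}(x)$ is of unit size. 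Your telescoping/summability computation then breaks, because the scales at which the $\beta$-estimate is available depend on $x$ in an uncontrolled way, and the Fubini swap you propose needs the estimate to hold at all $(x,r)$ simultaneously.

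The paper's fix is a density level-set decomposition: set $U_\nu = S^k_\eps \cap \{W_0 \geq \nu\}$ and work with $\mu_\nu = \haus^k\llcorner U_\nu$. By upper-semi-continuity of $W_0$, around any $x\in U_\nu$ one can find a small radius $r$ with $\sup_{B_{4r}(x)}W_{2r}\leq \nu+\eta$; then for \emph{every} $z\in U_\nu\cap B_r(x)$ and \emph{every} $s\le r$ the drop $W_{8s}(z)-W_{\gamma s}(z)$ is $\le (2c_0+1)\eta$, so Lemma \ref{lem:sym-from-drop} gives the needed $(0,\delta)$-symmetry uniformly, and Theorem \ref{thm:L2-est} applies at all centers and scales. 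Combined with the upper Ahlfors regularity from Theorem \ref{thm:main-mass}, the double sum/integral of $\beta^2$ is then not merely finite but $\le c\eta r^k$, small enough for Theorem \ref{thm:rect-reifenberg}. As a secondary remark, the paper's proof of the $\beta$-estimate is a direct eigenvalue computation using the Weiss defect \eqref{eqn:wdefect} and never passes through Hausdorff closeness of free boundaries; Proposition \ref{prop:fb-almost-sym} is not needed here, and your proposed route via quantitative cone-splitting of $\partial\fb$ is more circuitous than necessary.
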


Together these imply rectifiability and Minkowski estimates on the singular set.  Recall $k^*$ is the first dimension admitting a non-linear, $1$-homogenous solution to $(\star_{\R^{k^*}, 1})$.
\begin{corollary}\label{cor:main-sing}
Suppose $u$ solves $(\star_{B_5(0), Q})$ with $|Q|_{C^\alpha(B_5(0))} \leq \Lambda$, $\min_{B_5(0)} Q \geq 1/\Lambda$, and $0 \in \partial \fb$.

Then $\sing(u) \cap B_1(0)$ is rectifiable, and for every $0 < r \leq 1$, we have
\[
|B_r(\sing(u) \cap B_1(0))| \leq c(n,\Lambda, \alpha) r^{k^*} ,
\]
and
\[
(\haus^{n-1} \llcorner \partial \{u > 0\})(B_r(\sing(u) \cap B_1(0))) \leq c(n, \Lambda, \alpha) r^{k^* - 1}.
\]
In particular, 
\[
\haus^{n-k^*}(\sing(u) \cap B_1(0)) \leq c(n, \Lambda, \alpha).
\]
\end{corollary}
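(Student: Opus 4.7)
The plan is to deduce Corollary \ref{cor:main-sing} from Theorems \ref{thm:main-mass} and \ref{thm:rectifiable} by sandwiching the singular set inside a single effective stratum. Proposition \ref{prop:singsetinsidestrata} applies directly to $u$ on $B_3(0) \subset B_5(0)$ and produces an $\eps = \eps(n,\Lambda,\alpha) > 0$ with
\[
\sing(u) \cap B_1(0) \subset S^{n-k^*}_\eps(u) \subset S^{n-k^*}_{\eps,r}(u) \quad \text{for every } 0 < r \leq 1,
\]
the last inclusion coming from observation (iii) following Definition \ref{stratadef}. Rectifiability of $\sing(u) \cap B_1(0)$ is then immediate from Theorem \ref{thm:rectifiable}, since any Borel subset of a rectifiable set is rectifiable.

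For the Minkowski bound I invoke Theorem \ref{thm:main-mass} with $k = n-k^*$: for each $r \in (0,1]$ this produces a covering
\[
S^{n-k^*}_{\eps,r}(u) \cap B_1(0) \subset \bigcup_i B_r(x_i), \qquad \#\{x_i\} \leq c(n,\Lambda,\alpha) \, r^{-(n-k^*)}.
\]
Since $B_r(\sing(u) \cap B_1(0)) \subset \bigcup_i B_{2r}(x_i)$, summing volumes yields $|B_r(\sing(u) \cap B_1(0))| \leq c(n,\Lambda,\alpha) \, r^{k^*}$, and the final estimate $\haus^{n-k^*}(\sing(u) \cap B_1(0)) \leq c$ is the corresponding bound in Theorem \ref{thm:main-mass} applied to $S^{n-k^*}_\eps$.

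The remaining step is the $\haus^{n-1}$-content of $\partial \fb$ inside the same tube. For this I want the uniform upper Ahlfors bound
\[
\haus^{n-1}\bigl(\partial\fb \cap B_\rho(x)\bigr) \leq c(n,\Lambda) \, \rho^{n-1}
\]
for $x \in \partial\fb \cap B_3(0)$ and $\rho \leq 1$. This follows directly from Theorem \ref{thm:altcaf}: by (2), $\Delta u = q\, \haus^{n-1} \llcorner \partial\fb$ with $1/\Lambda \leq q \leq \Lambda$; by (3), $|Du| \leq c(n)\Lambda$. Testing the distributional equation $\Delta u = q \haus^{n-1}\llcorner\partial\fb$ against a standard cutoff of $B_\rho(x)$ and applying the divergence theorem gives $\int_{\partial\fb \cap B_\rho(x)} q\, d\haus^{n-1} \leq c(n)\Lambda \rho^{n-1}$, hence the claim. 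Applied to each ball $B_{2r}(x_i)$ of the covering above,
\[
(\haus^{n-1} \llcorner \partial\fb)\bigl(B_r(\sing(u) \cap B_1(0))\bigr) \leq c \cdot r^{-(n-k^*)} \cdot r^{n-1} = c(n,\Lambda,\alpha) \, r^{k^*-1}.
\]

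The substantive work lives entirely in Theorems \ref{thm:main-mass} and \ref{thm:rectifiable}; this corollary is a packaging argument. The only step deserving attention is the Ahlfors upper bound on $\partial\fb$, and it is a routine consequence of the Alt--Caffarelli theory recalled in Theorem \ref{thm:altcaf}.
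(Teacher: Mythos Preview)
Your proof is correct and follows exactly the route the paper intends: the paper's own proof is the single line ``Immediate from Proposition \ref{prop:singsetinsidestrata} and Theorems \ref{thm:main-mass}, \ref{thm:rectifiable},'' and you have merely unpacked that sentence, including the Ahlfors upper bound on $\partial\fb$ (which the paper cites explicitly as ``Theorem 4.5 in \cite{alt-caf}'' when proving the analogous estimate in Corollary \ref{cor:weak-Lp}). One cosmetic point: the covering centers $x_i$ from Theorem \ref{thm:main-mass} are not asserted to lie on $\partial\fb$, so strictly speaking your Ahlfors bound should be stated for arbitrary balls, which is immediate by enlarging to a ball centered on a nearby free-boundary point.
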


Moreover, since when $Q$ is constant, we have smoothness \emph{with estimates} at each regular scale, we can use the $S^{n-k^*}_{\eps, r}$ to obtain weak $L^{k^*}$ estimates on $D^2u$.  This is a quantitative version of Corollary \ref{cor:main-sing}, in which we interpret $\sing(u)$ as those points with $D^2 u = \infty$.
\begin{corollary}\label{cor:weak-Lp}
Suppose $u$ solves $(\star_{B_5(0), 1})$, with $0 \in \partial \fb$.  Then for every $0 < r \leq 1$, we have
\[
|B_r(\{ x \in \partial \fb \cap B_1(0) : |D^2 u(x)| > 1/r \} )| \leq c(n)  r^{k^*},
\]
and
\[
\haus^{n-1}(B_r(\{x \in \partial \fb \cap B_1(0) : |D^2u(x)| > 1/r \})) \leq c(n) r^{k^*-1}.
\]
\end{corollary}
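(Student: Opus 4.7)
The plan is to derive Corollary \ref{cor:weak-Lp} from Theorem \ref{thm:main-mass} applied to the $(n-k^*,\eps_0,C_0 r)$-stratum, by showing that any point where $|D^2 u|$ exceeds $1/r$ necessarily lies in that effective stratum. Here the hypothesis $Q\equiv 1$ is essential: the problem is perfectly scale-invariant, so regular points admit uniform smooth estimates that scale exactly like $1/r$. Set $E_r := \{x \in \partial\fb \cap B_1(0) : |D^2 u(x)| > 1/r\}$.

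The key reduction is the inclusion
\[
E_r \subset S^{n-k^*}_{\eps_0,\, C_0 r}(u) \cap B_1(0)
\]
for dimensional constants $\eps_0(n), C_0(n)$. I would prove this by contrapositive: suppose $x \in \partial\fb \cap B_1(0)$ and $u$ is $(n-k^*+1,\eps_0)$-symmetric in some $B_s(x)$ with $C_0 r \leq s \leq 1$. A compactness argument along the lines of Proposition \ref{prop:singsetinsidestrata} shows that for $\eps_0$ sufficiently small, any $(n-k^*+1)$-symmetric one-homogeneous solution descends to a nontrivial one-homogeneous solution on $\R^{k^*-1}$, which by the definition of $k^*$ must be a half-plane solution. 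Hence by Proposition \ref{prop:fb-almost-sym} the set $\partial\fb$ is Hausdorff-close to an $(n-1)$-plane in $B_{s/2}(x)$, and Theorem \ref{thm:altcaf}(5) promotes this to a $C^{1,\beta}$ graph representation of $\partial\fb \cap B_{s/4}(x)$. Since $Q \equiv 1$, classical higher regularity (hodograph--Legendre transform or Kinderlehrer--Nirenberg bootstrap, see \cite{alt-caf}) upgrades this to a smooth free boundary with scale-invariant estimates $|D^2 u| \leq C_1(n)/s$ on $B_{s/8}(x) \cap \overline{\{u > 0\}}$. Choosing $C_0 \geq 8 C_1$ forces $|D^2 u(x)| \leq 1/r$, contradicting $x \in E_r$.

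Given the inclusion, I would apply the ball-packing statement of Theorem \ref{thm:main-mass} with $\Lambda = 1$, $k = n-k^*$, $\eps = \eps_0$, and radius $C_0 r$, producing a covering of $S^{n-k^*}_{\eps_0, C_0 r}(u) \cap B_1(0)$ by $N \leq c(n)(C_0 r)^{-(n-k^*)} \leq c(n) r^{-(n-k^*)}$ balls of radius $C_0 r$. Since $B_r(E_r)$ is contained in the $(C_0+1)r$-tubular neighborhood of this covered set, summing volumes gives
\[
|B_r(E_r)| \leq N \cdot \omega_n ((C_0+1)r)^n \leq c(n) r^{k^*},
\]
which is the first estimate. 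For the Hausdorff estimate, I would invoke the uniform perimeter bound $\haus^{n-1}(\partial\fb \cap B_\rho(y)) \leq c(n)\rho^{n-1}$ (a standard consequence of Theorem \ref{thm:altcaf}(2) and nondegeneracy) and sum over the same $N$ balls, obtaining
\[
\haus^{n-1}\bigl(\partial\fb \cap B_r(E_r)\bigr) \leq N \cdot c(n)((C_0+1)r)^{n-1} \leq c(n) r^{k^*-1}.
\]

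The main obstacle is Step 1: turning $\eps$-regularity, which a priori yields only $C^{1,\beta}$ boundary regularity, into a scale-invariant $C^2$ bound on $u$. The higher regularity for constant $Q$ is classical, but must be recorded with uniform constants along every scale $s$, which is precisely why the argument collapses in the variable-$Q$ case and forces us to restrict to $Q \equiv 1$. Everything else is a bookkeeping exercise combining the packing version of Theorem \ref{thm:main-mass} with uniform $\haus^{n-1}$-density bounds on the free boundary.
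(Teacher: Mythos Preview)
Your proposal is correct and follows essentially the same route as the paper: show the inclusion $E_r \subset S^{n-k^*}_{\eps, c(n)r}$ by combining $(n-k^*+1,\eps)$-symmetry with Proposition~\ref{prop:fb-almost-sym}, $\eps$-regularity, and the Kinderlehrer--Nirenberg higher-regularity bootstrap (which gives the scale-invariant $|D^2u|\le c(n)/s$ bound), then read off both estimates from Theorem~\ref{thm:main-mass} together with the Ahlfors regularity of $\partial\{u>0\}$. Your write-up simply spells out in more detail the dimension-reduction step (that an $(n-k^*+1)$-symmetric blow-up must be a half-plane) which the paper leaves implicit.
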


\subsection{Other free-boundary problems}\label{section:other-probs}

The proof of Theorems \ref{thm:main-mass} and \ref{thm:rectifiable} require very little specific to the problem $(\star_{\Omega, Q})$.  In fact we only really require the compactness of Theorem \ref{thm:compactness}, the monotonicity formula of Theorem \ref{thm:weiss-monotonicity}, and some kind of a priori estimate like that of Theorem \ref{thm:altcaf}(3).  Corollary \ref{cor:main-sing} additionally requires an $\eps$-regularity theorem.  These properties are satisfied more-or-less verbatim for a variety of other free-boundary problems.  Let us detail some specific examples.

\textbf{Two-phase} Introduced by Alt-Caffarelli-Friedman in \cite{alt-caf-fried}, the problem requests $u$ minimize
\[
J^{d}(u) := \int_{\Omega} |Du|^2 + q_+^2(x) 1_{\{ u > 0 \}}(x) + q_-^2(x) 1_{\{ u < 0\}}(x) dx,
\]
for $q_+ \geq q_- > 0$ both in $C^\alpha(\Omega)$. Under the assumption that $q_+^2 -q_-^2 \geq c_0 > 0$,  Alt-Caffarelli-Friedman prove an analogue of Theorem \ref{thm:altcaf} (the $\eps$-regularity for $\partial \{u > 0\}$ in all dimensions follows from later work of Caffarelli see \cite{caffarelliharnack} and \cite{caffarelliharnack2}). The analogue of Theorem \ref{thm:compactness} for the two-phase problem is stated and proven in full in \cite{davidtoroalmostminimizers} (in the setting of almost-minimizers). Weiss \cite{weiss:partial-reg} proved almost-monotonicity (in the sense of Theorem \ref{thm:weiss-monotonicity}) of the quantity
\begin{align*}
W^{d}_r(y) &:= r^{-n} \int_{B_r(y)} |Du|^2 dx + q_+^2(y) r^{-n} |\{u > 0\}\cap B_r(y)| + q_-^2(y) r^{-n} |\{u < 0\}\cap B_r(y)| \\
&\quad - r^{-n-1} \int_{\partial B_r(y)} u^2 d\sigma.
\end{align*}

As such, we can apply our arguments to the free boundary, $\partial \{u > 0\}$ (note that the set $\partial \{u < 0\}$ is much less well understood without additional assumptions on $q_{\pm}$). 

\textbf{Vector-valued one-phase} The problem considered in \cite{caf-sha-yer} and \cite{maz-ter-vel} (see also \cite{kriv-lin}) is essentially asking to minimize
\[
J^{v}(U) := \int_{\Omega} |DU|^2 + Q^2(x) 1_{\{ |U| > 0 \}}(x) dx,
\]
where now $U : \Omega \to \R^N$ is vector valued, with the restriction that $U^i \geq 0$ on $\Omega$ (actually something slightly weaker is true, see Remark 1.5 in \cite{maz-ter-vel}). As usual, we stipulate $Q$ is positive and $C^\alpha$.  In \cite{caf-sha-yer}, \cite{maz-ter-vel}, again the analogues to Theorems \ref{thm:altcaf},  \ref{thm:compactness} are proven as is almost-monotonicity of
\[
W^{v}_r(y) := r^{-n} \int_{B_r(y)} |DU|^2 dx + Q^2(y) r^{-n}|\{|U| > 0\} \cap B_r(y)| - r^{-n-1} \int_{\partial B_r(y)} |U|^2 d\sigma.
\]

For the sake of precision let us remark that in \cite{maz-ter-vel}, the $\eps$-regularity result is written in terms of the density of $W^{v}$ (as opposed to flatness), much like we do in Theorem \ref{prop:reg-by-W}. Let us also remark that the compactness results in \cite{caf-sha-yer} and  \cite{maz-ter-vel} are written for special types of limits (blow-ups in the former and what are referred to as {\it pseudo-blowups} in the latter). However, adapting the proof for pseudo-blowups to one for generic limits requires merely cosmetic changes. 

\textbf{Almost-minimizers in one-phase} In \cite{davidtoroalmostminimizers}, they considered $u : \Omega \to \R$ which is an almost-minimizer for $J$ in the following sense: if $B_r(x) \subset\subset \Omega$, and $v - u \in W^{1,2}_0(B_r(x))$, then
\[
\left( \int_{B_r(x)} |Du|^2 + Q^2(x) 1_{\fb}(x) \right) \leq (1 + \kappa r^\gamma ) \left( \int_{B_r(x)} |Dv|^2 + Q^2(x) 1_{\{ v > 0\}}(x) \right),
\]
for some fixed $\gamma \in (0, 1)$, and positive $Q \in C^\alpha(\Omega)$.  Except for $\eps$-regularity (and of course, the representation formula) analogues of Theorems \ref{thm:altcaf} and  \ref{thm:compactness} were proven in \cite{davidtoroalmostminimizers}. In \cite{davidengelsteintoro}, it is shown that the quantity in \eqref{eqn:weissmonodef} is almost-monotone and an $\eps$-regularity theorem of type Theorem \ref{prop:reg-by-W} holds (see Propositions 5.2, 6.2 and Theorem 7.1 there).

For each of the above problems we can define (effective) strata using the Weiss-type monotonicity formulas.  Lemma \ref{lem:sym-from-drop} and Theorems \ref{thm:dichotomy}, \ref{thm:L2-est} continue to hold, with the same proofs.  We have
\begin{theorem}
Theorems \ref{thm:main-mass}, \ref{thm:rectifiable} and Corollary \ref{cor:main-sing} hold for the problems ``two-phase,'' ``vector-valued one-phase'' and ``almost-minimizing one-phase'' listed above.
\end{theorem}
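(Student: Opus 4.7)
The plan is to show that for each of the three problems, every ingredient used in the one-phase proofs of Theorems \ref{thm:main-mass}, \ref{thm:rectifiable} and Corollary \ref{cor:main-sing} has a direct analogue, so that the arguments go through \emph{verbatim} up to notational changes. The author's discussion preceding the statement already assembles the building blocks; the work of the proof is organizing them and checking nothing new is required.

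First, for each setting, fix the notion of density: replace $W_r(Q,u,y)$ by $W_r^d$, $W_r^v$, or (in the almost-minimizer case) by the same $W_r$, but now with an almost-monotonicity statement carrying a remainder of the form $c_0 r^\alpha + c_0 r^\gamma$, as provided by \cite{alt-caf-fried, weiss:partial-reg}, \cite{caf-sha-yer, maz-ter-vel}, and \cite{davidengelsteintoro} respectively. Define the corresponding $0$-density $W_0$ via the limit as $r \downarrow 0$, noting that the remainder is integrable in $\log r$ so the limit and upper semicontinuity under blow-up limits both survive. Define $k$-symmetric and $(k,\eps)$-symmetric functions by the same $L^2$ condition, with $|u|$ replaced by $|U|$ in the vector-valued case. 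The strata $S^k$, $S^k_\eps$, $S^k_{\eps,r}$ are defined as in Definition \ref{stratadef}, tracked on $\partial\{u > 0\}$ (resp.\ $\partial\{|U|>0\}$).

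Next, check that the three structural inputs survive in each case: (a) compactness of sequences of solutions with uniform a priori bounds, including Hausdorff convergence of the free boundaries and strong $W^{1,2}_{loc}$ convergence — this is the content of Theorem 9.1 of \cite{davidtoroalmostminimizers} for two-phase and for almost-minimizers, and (after upgrading from pseudo-blowups to generic limits, a cosmetic change as the authors note) of \cite{caf-sha-yer, maz-ter-vel} for the vector-valued problem; (b) interior Lipschitz estimates for $Du$ (resp.\ $DU$) from the same references; (c) an $\eps$-regularity theorem phrased either in terms of flatness or in terms of the Weiss density, from \cite{caffarelliharnack, caffarelliharnack2} in the two-phase case, \cite{maz-ter-vel} in the vectorial case, and \cite{davidengelsteintoro} for almost-minimizers. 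In particular, Propositions \ref{prop:reg-by-W}, \ref{prop:singsetinsidestrata}, and \ref{prop:fb-almost-sym}, whose proofs are pure compactness+$\eps$-regularity arguments, carry through with the obvious substitutions.

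With these inputs in place, one runs the machinery of the main body of the paper. The key quantitative symmetry lemma (Lemma \ref{lem:sym-from-drop}) is proved by a contradiction/compactness argument that only uses strong $L^2$ convergence of gradients together with Weiss monotonicity, so it is insensitive to the additive almost-monotonicity error $O(r^{\min(\alpha,\gamma)})$ provided one absorbs it by restricting to small scales; the dichotomy (Theorem \ref{thm:dichotomy}) and $L^2$ best-approximation estimate (Theorem \ref{thm:L2-est}) then follow with the identical proofs. Feeding these into the Naber--Valtorta covering scheme yields Theorems \ref{thm:main-mass} and \ref{thm:rectifiable} in each setting; combining with the $\eps$-regularity-based inclusion $\sing(u) \subset S^{n-k^*}_\eps$ (and its analogues) then gives Corollary \ref{cor:main-sing}.

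The main obstacle I anticipate is purely bookkeeping: verifying that the almost-monotonicity remainder, especially in the almost-minimizer setting where it has the form $c_0 r^\gamma$, does not corrupt the ``density drop implies quantitative symmetry'' mechanism at the scales used by Naber--Valtorta. This reduces to the observation that the densities $W_0$ are still well-defined and upper semicontinuous, and that on any fixed scale $r \leq 1$ the error is bounded; all subsequent estimates scale-invariantly absorb this into constants depending on $\Lambda$, $\alpha$, $\gamma$, and $\kappa$. Apart from this, no new ideas are needed beyond those already deployed for $(\star_{\Omega,Q})$.
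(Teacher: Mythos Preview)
Your proposal is correct and matches the paper's approach: the paper does not give a separate formal proof of this theorem, instead relying on the preceding discussion in Section \ref{section:other-probs} to assert that the required inputs (compactness, Lipschitz bounds, $\eps$-regularity, and Weiss-type almost-monotonicity) are available in each setting, after which Lemma \ref{lem:sym-from-drop} and Theorems \ref{thm:dichotomy}, \ref{thm:L2-est} ``continue to hold, with the same proofs.'' Your write-up is a faithful (and somewhat more explicit) organization of exactly this reasoning.
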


\section{Rectifiable Reifenberg and proof outline}

The arguments of Naber-Valtorta are an effective dimension reduction in the vein of Federer and Almgren.  Recall that in the original dimension reduction argument, for any $1$-homogenous $u$ the singular set $\sing(u)$ could always be contained in some $L^{n-k*}$-space.  Following \cite{naber-valtorta:harmonic}, this is made effective: if $u$ is ``almost'' $1$-homogenous, then $\sing(u)$ is ``almost'' contained in some affine $L^{n-k^*}$.

The fundamental insight of Naber-Valtorta is that the error in each ``almost'' can be controlled by the density drop between scales, and from monotonicity \emph{the density drops are summable across scales}.  At a crude level this means that as you progress to smaller scales the errors accumulate to something finite, but making this precise takes significant effort.  We recommend the introduction of \cite{naber-valtorta:harmonic} for a more detailed outline.

Broadly, the argument of \cite{naber-valtorta:harmonic} splits into two parts.  The first is the Discrete- and Rectifiable-Reifenberg Theorems, which are purely GMT results that use Jones' $\beta$-numbers (Jones introduced the $L^\infty$ analogue in \cite{jonesbeta}) to prove packing bounds and rectifiability.  We state them below, and in this paper we will use them as black boxes, but invite the reader to examine them in detail in \cite{naber-valtorta:harmonic}, \cite{miskiewicz}, or \cite{env} (where it is done for general measures).

\begin{theorem}[Discrete-Reifenberg \cite{naber-valtorta:harmonic}]\label{thm:discrete-reifenberg}
Let $\{B_{r_q}(q)\}_q$ be a collection of disjoint balls, with $q \in B_1(0)$ and $0 < r_q \leq 1$, and let $\mu$ be the packing measure $\mu = \sum_q r_q^k \delta_{q}$, where $\delta_{q}$ is the Dirac delta at $q$. 

There exists a constant $\delta_{DR} > 0$ (depending only on dimension), such that if
\[
\int_0^{2r} \int_{B_{r}(x)} \beta^k_{\mu, 2}(z, s)^2 d\mu(z) \frac{ds}{s} \leq \delta_{DR}(n) r^k \quad \forall x \in B_1(0), 0 < r \leq 1, 
\]
then
\[
\mu(B_1(0)) \equiv \sum_q r_q^k \leq C_{DR}(n).
\]
\end{theorem}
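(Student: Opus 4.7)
\medskip
\noindent\textbf{Overall strategy.} The plan is to prove by downward induction on dyadic scales that the density ratio $D(x, r) := \mu(B_r(x))/r^k$ is bounded by a dimensional constant $C(n)$ for every $B_r(x)$ with $x \in B_1(0)$ and $r \in (0, 1]$; taking $x = 0$, $r = 1$ then gives the result. The base case is essentially free: because the balls $B_{r_q}(q)$ are disjoint, at any scale $s \leq \min_q r_q$ the ball $B_s(x)$ meets at most one atom of $\mu$ and contributes at most $r_q^k \leq s^k$, so $D(x, s) \leq 1$. All the work is in the inductive step, which is a packing argument built around best $L^2$ $k$-plane approximations.

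\medskip
\noindent\textbf{The inductive step.} Fix $B_r(x)$, and let $V = V^k_{\mu, 2}(x, r)$ be a plane realizing the infimum in \eqref{eqn:beta}, so that by definition
\[
\int_{B_r(x)} d(z, V)^2 \, d\mu(z) = r^{k+2} \, \beta^k_{\mu, 2}(x, r)^2 .
\]
Split $\spt \mu \cap B_r(x)$ into a near set $N_\eta = \{ z : d(z, V) \leq \eta r \}$ and a far set $F_\eta$, for a small dimensional parameter $\eta$ to be chosen. Cover $V \cap B_r(x)$ by $c_0(n)$ round balls of radius $r/2$ centered on $V$, with $c_0(n)$ the sharp Euclidean covering number of a $k$-disk by $k$-disks of half the radius; the $\eta r$-thickenings of these cover $N_\eta$, while by Chebyshev $\mu(F_\eta) \leq \eta^{-2} \, r^k \, \beta^k_{\mu, 2}(x, r)^2$. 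Applying the inductive bound inside each of the $c_0(n)$ covering balls and adding the far contribution yields a recursion of the form
\[
D(x, r) \leq c_0(n) \, 2^{-k} \sup_i D(x_i, r/2) + \eta^{-2} \, \beta^k_{\mu, 2}(x, r)^2 .
\]
Now $c_0(n) \, 2^{-k}$ is exactly the covering volume ratio and for a suitable choice of cover equals $1$ up to an arbitrarily small multiplicative slack. Iterating the recursion through dyadic scales $r_j = 2^{-j} r$ accumulates a total error of order $\eta^{-2} \sum_j \beta^k_{\mu, 2}(x_j, r_j)^2$, and the weight $ds/s$ in the hypothesis is precisely the dyadic summation weight, so this total is bounded by $\eta^{-2} \, \delta_{DR}(n)$. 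Choosing $\delta_{DR}(n)$ small enough closes the bootstrap and produces $C_{DR}(n)$ as a purely dimensional constant.

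\medskip
\noindent\textbf{Main obstacle.} The delicate point is tilt across scales. The best plane $V$ at scale $r$ and the best plane inside a sub-ball at scale $r/2$ can differ by an angle whose size is only controlled in $L^2$-average by $\beta$, so a point can sit in the near set $N_\eta$ at the parent scale and leak into the far set at the child scale, and the per-scale Chebyshev errors need not sum along a single sequence of sub-balls. The genuine content of Naber--Valtorta's argument --- which I would cite rather than re-derive --- is a second-moment or cone-splitting identity showing that, after integrating over $\mu$ and telescoping, the total plane-tilt across all scales is itself controlled by the double integral in the hypothesis. This is the technical heart: it says that the $L^2$-best planes are stable against monotone reweightings of $\mu$, and lets one absorb the tilt errors into the $\beta^2$ budget. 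Given that identity, the induction outlined above goes through cleanly, and $\delta_{DR}(n), C_{DR}(n)$ emerge as purely dimensional constants.
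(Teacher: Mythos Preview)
The paper does not prove this theorem; it explicitly imports it from \cite{naber-valtorta:harmonic} as a black box, so there is no in-paper proof to compare against. Your sketch must therefore be judged on its own merits, and it has a structural gap that prevents the induction from closing.

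The fatal issue is the claim that $c_0(n) \, 2^{-k}$ can be made equal to $1$ up to arbitrarily small slack. This is false: the covering number of a $k$-disk by $k$-disks of half the radius is strictly greater than $2^k$ for $k \geq 2$ (overlaps are unavoidable), so $c_0(n) 2^{-k} \geq 1 + \tau(k)$ for a definite $\tau(k) > 0$. Your recursion therefore reads $D(x,r) \leq (1+\tau) \sup_i D(x_i, r/2) + \eta^{-2}\beta^2$, and iterating over $\sim \log(1/R)$ dyadic scales produces a multiplicative factor $(1+\tau)^{\log(1/R)}$ that blows up, regardless of how small the accumulated $\beta^2$ error is. No amount of tilt control or telescoping of second moments rescues this, because the loss is already present when $\beta \equiv 0$ and $\mu$ sits exactly on a plane. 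There is also a minor error in the base case: at scales $s \leq \min_q r_q$ a ball $B_s(x)$ meeting a single atom $q$ has $\mu(B_s(x)) = r_q^k \geq s^k$, so $D(x,s) \geq 1$, not $\leq 1$; the inequality goes the wrong way.

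The Naber--Valtorta argument avoids the covering-number loss entirely by not covering. Instead it runs a Reifenberg-type construction: at each dyadic scale one builds a map $\sigma_j$ that is a small $C^1$ perturbation of the identity, pushing the approximating plane at scale $r_j$ onto the plane at scale $r_{j+1}$, and the composition $\Phi = \cdots \circ \sigma_2 \circ \sigma_1$ is shown to be bi-Lipschitz from a fixed $k$-plane onto a set containing $\spt\mu$, with bi-Lipschitz constant controlled by the integrated $\beta^2$. The mass bound then comes from the area formula for $\Phi$, which has no covering loss. The identity you allude to in your ``main obstacle'' paragraph is indeed used, but it feeds into the bi-Lipschitz estimate for $\Phi$, not into a covering recursion.
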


\begin{theorem}[Rectifiable-Reifenberg \cite{naber-valtorta:harmonic}]\label{thm:rect-reifenberg}
Let $\delta_{RR} > 0$ be a constant depending only on dimension and $S$ be a set, satisfying
\[
\int_0^{2r} \int_{B_{r}(x)} \beta^k_{\haus^k \llcorner S, 2}(z, s)^2 d(\haus^k \llcorner S)(z) \frac{ds}{s} \leq \delta_{RR}(n) r^k \quad \forall x \in B_1(0), 0 < r \leq 1.
\]

Then $S \cap B_1(0)$ is $k$-rectifiable, and satisfies
\[
\haus^k(S \cap B_1(0) \cap B_r(x)) \leq C_{RR}(n) r^k \quad \forall x \in B_1(0), 0 < r \leq 1.
\]
\end{theorem}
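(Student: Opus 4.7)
My plan follows the multiscale best-plane strategy of Naber--Valtorta, which one can view as an $L^2$ refinement of Jones's traveling salesman construction. The goal is a countable Lipschitz-graph covering of $\haus^k$-almost all of $S \cap B_1(0)$, together with the Ahlfors-type upper bound on $\haus^k \llcorner S$. I would do these in two stages.

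First, I would establish the measure bound $\haus^k(S \cap B_r(x)) \leq C r^k$ by bootstrapping off Theorem \ref{thm:discrete-reifenberg}. Fix a ball $B_r(x)$ and take a maximal $r/5$-separated collection $\{q_i\} \subset S \cap B_r(x)$, so that the balls $B_{r/10}(q_i)$ are disjoint and cover $S \cap B_r(x)$ with controlled multiplicity. Apply Theorem \ref{thm:discrete-reifenberg} to the packing measure $\mu = \sum_i (r/10)^k \delta_{q_i}$: the Dini hypothesis on $\haus^k \llcorner S$ transfers to $\mu$ once one can compare the two on each such ball, which is the content of an induction on scales. One shows that if the desired upper density bound fails at some minimal scale, one produces a contradiction with Theorem \ref{thm:discrete-reifenberg} at a slightly larger scale.

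For rectifiability, pick $x_0 \in S$ and consider the sequence of best-fit planes $V_j = V^k_{\haus^k \llcorner S, 2}(x_0, 2^{-j})$. The key geometric input is a Pythagorean/tilt estimate of the form
\[
\angle(V_j, V_{j+1})^2 \leq C\bigl( \beta^k_{\haus^k\llcorner S, 2}(x_0, 2^{-j})^2 + \beta^k_{\haus^k\llcorner S, 2}(x_0, 2^{-j-1})^2 \bigr),
\]
which holds whenever $\haus^k \llcorner S$ has two-sided comparability to $r^k$ at the relevant scales (supplied by Step 1 plus a quantitative lower-density truncation). Summing in $j$ and invoking the Dini hypothesis shows $\{V_j\}$ is Cauchy, so $V_\infty(x_0) := \lim_j V_j$ exists, and the same bound shows $x_0 \mapsto V_\infty(x_0)$ is Lipschitz on a large measurable subset $G \subset S$. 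A Whitney/partition-of-unity construction then produces a Lipschitz function $f : V_\infty \to V_\infty^\perp$ whose graph contains $G$. Iterating on the residual set $S \setminus G$ (whose measure decays geometrically provided $\delta_{RR}$ is small enough) exhausts $S$ modulo an $\haus^k$-null set, giving rectifiability.

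The principal obstacle is the circular coupling between the two stages: the tilt estimate needs two-sided density comparability of $\haus^k \llcorner S$ at \emph{every} scale, yet the lower density can degenerate on an exceptional set whose measure one only controls \emph{after} completing the graph construction. Resolving this is what makes the Naber--Valtorta machinery genuinely delicate: it is handled through a corona-style stopping-time decomposition in which, whenever the density or a running $\beta$-integral first exceeds a threshold at a ball, one freezes the best plane there and restarts the construction on the descendents. The bookkeeping needed to show this recursion terminates with a controlled total packing cost (and not merely in a local neighborhood) is precisely the reason the proof in \cite{naber-valtorta:harmonic} runs for many pages and why we are content to invoke it as a black box.
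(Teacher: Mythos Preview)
The paper does not prove this theorem. It is stated with attribution to Naber--Valtorta and, as the paragraph immediately preceding the statement makes explicit, is used ``as a black box'' with the reader referred to \cite{naber-valtorta:harmonic}, \cite{miskiewicz}, or \cite{env} for the actual argument. So there is no proof in the paper to compare your proposal against.

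Your sketch is a reasonable high-level outline of the strategy in the cited references, and you yourself arrive at the correct conclusion in your final paragraph: the corona-type stopping-time bookkeeping that resolves the circularity between the density bounds and the tilt estimates is the real content, it occupies many pages in \cite{naber-valtorta:harmonic}, and this paper deliberately does not reproduce it. In that sense your treatment matches the paper's exactly --- both invoke the result as a black box. One minor caution on your outline: the first stage as you describe it (deducing the $\haus^k$ upper bound by transferring the Dini hypothesis from $\haus^k\llcorner S$ to a discrete packing measure and applying Theorem~\ref{thm:discrete-reifenberg}) is more circular than you let on, since comparing $\beta$-numbers for the two measures already presupposes scale-invariant mass control on $\haus^k\llcorner S$; in the actual Naber--Valtorta argument the mass bound and the rectifiability are produced together inside a single inductive construction rather than in two clean sequential stages.
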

The $L^p$-Jones $\beta$-numbers (defined in \eqref{eqn:beta} above, originally introduced in \cite{DavidandSemmes}) have been used in many instances to gain geometric information on the support of $\mu$. In \cite{DavidandSemmes} it was shown that if $\mu = \mathcal H^k|_E$ and $\mu(B(x,r)\cap E) \simeq r^k$ then bounds like those in Theorems \ref{thm:discrete-reifenberg} and \ref{thm:rect-reifenberg} above imply that $E$ is ``uniformly rectifiable". Later in \cite{guytororeifenberg}, similar bounds to those above (along with additional constraints on the approximating planes $V^k_{\mu, 2}(x, r)$) were used to construct Reifenberg-type parameterizations of sets. More recently, the paper of \cite{azzam-tolsa}, showed that bounds on the $\beta$-numbers, along with additional constraints on the density of the measure $\mu$, imply rectifiability of the support of $\mu$. These are just some of the wide array of applications the $L^p$ $\beta$-numbers have found in geometric measure theory and harmonic analysis. For a more complete discussion on Theorems \ref{thm:discrete-reifenberg} and \ref{thm:rect-reifenberg}, and how they fit in with the previous work in harmonic analysis and geometric measure theory, we recommend the introduction of \cite{naber-valtorta:harmonic}.




In this paper we focus on the second part of the argument of \cite{naber-valtorta:harmonic}.  The starting point is an estimate relating $\beta$-numbers of $S^k_\eps$ to density drop (Theorem \ref{thm:L2-est}).  This says, for example, that if $\mu$ a finite measure supported in $S^k_\eps$, and $x \in S^k_\eps$, and $W_{8r}(x) - W_{\gamma r}(x) < \gamma$ (for some small $\gamma$), then
\begin{equation}\label{eqn:teaser-L2}
\beta^k_{\mu,2}(x, r)^2 \leq c r^{-k} \int_{B_r(x)} W_{8r}(z) - W_r(z) d\mu(z).
\end{equation}

The naive strategy might be the following: cover $S^k_\eps$ with a Vitali collection of balls $\{B_r(x_i)\}_i$, then apply \eqref{eqn:teaser-L2} and discrete Reifenberg on the associated packing measure to obtain Theorem \ref{thm:main-mass}.  Unfortunately, there are several technicalities.

First, to exploit summability we need a priori $\mu$-mass control.  This is not a big problem: since we have control on the number of overlaps, we can inductively assume mass bounds at lower scales, to prove mass bounds at the next.  This induction is implemented in Lemma \ref{lem:centered-packing}.  We point out that Lemma \ref{lem:centered-packing} crucially requires small density drops \emph{at the ball centers}.

Second, we need \emph{smallness} of the density drop in both \eqref{eqn:teaser-L2} and discrete Reifenberg.  We adjust the naive strategy as follows: we instead build a cover of $S^k_\eps$ by balls $\{B_{r_i}(x_i)\}_i$ which have a small \emph{but definite} amount of density drop.  So, for each $i$, if $r_i > r$ we have
\[
\sup_{B_{2r_i}(x)} W_{2r_i} \leq \sup_{B_2(0)} W_2  - \eta.
\]

In each of these ``big" $B_{r_i}(x_i)$'s, we can recursively create a new covering by balls of smaller radii.  Each time we recurse we eat a definite amount of density, and decrease the radii by some fixed factor.  By monotonicity the process must terminate in finitely-many steps (see Theorem \ref{thm:key-packing}).

However, the most subtle issue is reconciling these two conditions on the cover, of having small density drops at the balls centers, while admitting definite density drop on the entire ball.  A naive cover satisfying one will not satisfy the other.

We are saved by the following elementary restatement of effective dimension reducing: If the points of small density drop ``look $k$-dimensional,'' then $S^k_\eps$ sits near a $k$-plane.  In particular, we get the following dichotomy (Theorem \ref{thm:dichotomy}): either we have small density drop on $S^k_\eps$, or the high-density points ``look $(k-1)$-dimensional.''  In the first (``good'') case we can use Lemma \ref{lem:centered-packing} to obtain packing bounds.  In the second (``bad'') case, we can get (very good!) packing bounds for the silly reason that we are one-dimension off.

This dichotomy allows for a general ``good ball/bad ball'' stopping-time construction, wherein we alternate constructing good/bad ``trees'' to progress down in scale (we remark that this has parallels to the Corona constructions of David and Semmes \cite{DavidandSemmes}).  See Section \ref{section:main-const} for a more detailed exposition.  This top-down approach is different from the original bottom-up method of \cite{naber-valtorta:harmonic} and we hope it will be more familiar to those working in harmonic analysis and PDEs. It has also been implemented in \cite{naber-valtorta:approx-harmonic}, \cite{delellis:q-valued}, \cite{env}.

\section{Symmetry and strata}\label{sec:symmetryandstrata}

In this section we relate the drop in density to the effective strata.  First, let us show that small density drops imply almost-symmetry.
\begin{lemma}\label{lem:sym-from-drop}
Take $\delta > 0$, and $u$ a solution to $(\star_{B_2(0), Q})$ with $|Q + 1/Q|_{C^{0}(B_2(0))} \leq \Lambda$ and $0 \in \partial \fb$.  Then there is a $\gamma = \gamma(n, \Lambda, \delta)$ so that if $[Q]_{\alpha, B_2} \leq \gamma$, and
\[
W_1(Q, u, 0) - W_\gamma(Q, u, 0) \leq \gamma,
\]
then $u$ is $(0, \delta)$-symmetric in $B_1(0)$.
\end{lemma}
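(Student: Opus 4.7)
My plan is a standard compactness/contradiction argument, driven by the almost-monotonicity formula \eqref{eqn:wdefect} and the compactness Theorem \ref{thm:compactness}. Suppose the lemma fails: then for some $\delta_0>0$ there are sequences $\gamma_i\downarrow 0$, $Q_i$ with $|Q_i+1/Q_i|_{C^0(B_2)}\leq\Lambda$ and $[Q_i]_{\alpha,B_2}\leq\gamma_i$, and $u_i$ solving $(\star_{B_2(0),Q_i})$ with $0\in\partial\{u_i>0\}$, satisfying
\[
W_1(Q_i,u_i,0) - W_{\gamma_i}(Q_i,u_i,0) \leq \gamma_i,
\]
but no $u_i$ is $(0,\delta_0)$-symmetric in $B_1(0)$. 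By Theorem \ref{thm:compactness} (after a subsequence), $u_i\to u_\infty$ in $L^2_{loc}(B_2)$ and $Q_i\to Q_\infty$ in $C^0_{loc}$. Since $[Q_i]_\alpha\to 0$, the limit $Q_\infty$ is a positive constant, $u_\infty$ solves $(\star_{B_2,Q_\infty})$, and Hausdorff convergence of the free boundaries gives $0\in\partial\{u_\infty>0\}$.

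Next I would pin down the densities. For any fixed $0<s\leq 1$ and $i$ large enough that $\gamma_i<s$, almost-monotonicity gives
\[
-c_0\gamma_i \leq W_s(Q_i,u_i,0) - W_{\gamma_i}(Q_i,u_i,0), \qquad -c_0\gamma_i \leq W_1(Q_i,u_i,0) - W_s(Q_i,u_i,0),
\]
so combined with the hypothesis $W_1(u_i,0)-W_{\gamma_i}(u_i,0)\leq\gamma_i$, the two densities $W_s(Q_i,u_i,0)$ and $W_1(Q_i,u_i,0)$ differ by at most $(c_0+1)\gamma_i$. Continuity of $W_r$ at fixed positive $r$ under the convergence of Theorem \ref{thm:compactness} then yields $W_s(Q_\infty,u_\infty,0)=W_1(Q_\infty,u_\infty,0)$ for every $0<s\leq 1$.

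Because $Q_\infty$ is constant, Weiss's monotonicity formula holds with $[Q_\infty]_\alpha=0$, so \eqref{eqn:wdefect} applied to $u_\infty$ collapses to
\[
\int_{B_1\setminus B_s} \frac{|u_\infty - y\cdot Du_\infty|^2}{|y|^{n+2}}\,dy = 0 \quad \forall\, 0<s<1.
\]
This forces $u_\infty(y)=y\cdot Du_\infty(y)$ a.e.\ on $B_1(0)$, i.e.\ $u_\infty$ is $1$-homogeneous about $0$ on $B_1(0)$, hence $0$-symmetric in $B_1(0)$ in the sense of the definition in Section \ref{section:strata}. Taking $\tilde u = u_\infty$ as the comparison function in the definition of $(0,\delta_0)$-symmetry, the $L^2(B_1(0))$-convergence $u_i\to u_\infty$ gives $\int_{B_1}|u_i-u_\infty|^2<\delta_0$ for all large $i$, contradicting the assumption that no $u_i$ is $(0,\delta_0)$-symmetric in $B_1(0)$.

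The main subtlety I anticipate is bookkeeping with the almost-monotone quantity: one must be careful that the $\gamma$-scale lower bound on $W_{\gamma_i}$ really does transfer, in the limit $i\to\infty$, to a statement about $W_s$ for \emph{every} fixed $s>0$, and then onto $u_\infty$; this is what allows us to conclude $1$-homogeneity on all of $B_1(0)$ rather than merely on a fixed annulus. Everything else is routine application of compactness and the definitions.
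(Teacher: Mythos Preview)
Your proof is correct and follows essentially the same compactness/contradiction argument as the paper. The only cosmetic difference is in how the density information is passed to the limit: the paper invokes upper-semi-continuity of $W_0$ in one line to get $W_1(Q_\infty,u_\infty,0)-W_0(Q_\infty,u_\infty,0)\leq 0$, whereas you unfold this by fixing $s>0$, sandwiching $W_s(u_i,0)$ between $W_{\gamma_i}$ and $W_1$ via almost-monotonicity, and then using continuity of $W_s$ at positive radius---this is exactly what underlies the upper-semi-continuity statement, so the two arguments are the same in substance.
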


\begin{proof}
Suppose not: there are sequences $\gamma_i \to 0$ and $u_i$ solving $(\star_{B_2(0), Q_i})$, with $|Q_i + 1/Q_i| \leq \Lambda$ and $[Q_i]_{\alpha, B_2(0)} \leq \gamma_i$, such that
\[
W_1(Q_i, u_i, 0) - W_{\gamma_i}(Q_i, u_i, 0) \leq \gamma_i, 
\]
but $u_i$ is not $(0, \delta)$-symmetric in $B_1(0)$.

Passing to a subsequence, we have $Q_i \to Q \equiv Q_0$ a constant, and $u_i \to u$ as in Theorem \ref{thm:compactness}, with $u$ solving $(\star_{B_{3/2}(0), Q_0})$.  By upper-semi-continuity,
\[
W_1(Q_0, u, 0) - W_0(Q_0, u, 0) \leq \limsup_i (W_1(Q_i, u_i, 0) - W_{\gamma_i}(Q_i, u_i, 0)) = 0.
\]
Therefore by Theorem \ref{thm:weiss-monotonicity}, $W_r(Q_0, u, 0)$ is constant in $r \in (0, 1]$, and hence $u$ is $1$-homogenous in $B_1(0)$.


Since
\[
\int_{B_1(0)} |u_i - u|^2\ dx \to 0,
\]
we have that each $u_i$ is $(0, o(1))$-symmetric, a contradiction for $i$ sufficiently large.
\end{proof}

We now prove a crucial dichotomy, which says either we have small drop in the entire $S^k_\eps$, or the high-density points look $(k-1)$-dimensional.

\begin{theorem}[Key Dichotomy]\label{thm:dichotomy}
There is an $\eta_0(n, \Lambda, E_0, \eps, \rho, \gamma, \eta', \alpha) << \rho$ so that the following holds: let $u$ solve $(\star_{B_4(0), Q})$, with $|Q + 1/Q|_{C^0(B_4(0))} \leq \Lambda$ and $0 \in \partial \fb$.  Take $E \in [0, E_0]$ with $\sup_{x \in B_1(0)} W_2(Q, u, x) \leq E$.

If $\eta \leq \eta_0$, and $[Q]_{\alpha, B_4(0)} \leq \eta$, then at least one of two possibilities occurs:
\begin{enumerate}[label=(\roman*)]
\item \label{itm:smalldroponstrata} we have $W_{\gamma\rho}(u,x) \geq E - \eta'$ on $S^k_{\eps, \eta} \cap B_1(0)$,  or

\item\label{itm:closetoplane} there is an affine $\ell^{k-1}$ so that $\{ W_{2\eta}(u,x) \geq E - \eta \} \cap B_1(0) \subset B_{\rho}(\ell)$.
\end{enumerate}
\end{theorem}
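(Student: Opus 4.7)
I would argue by contradiction and compactness. Suppose no such $\eta_0$ exists: for fixed $(n,\Lambda,E_0,\eps,\rho,\gamma,\eta',\alpha)$ there is a sequence $\eta_i \to 0$ with solutions $u_i$ of $(\star_{B_4(0),Q_i})$ satisfying the hypotheses and $E_i := \sup_{B_1(0)} W_2(Q_i,u_i,\cdot) \to E_\infty \in [0, E_0]$, but for which neither \ref{itm:smalldroponstrata} nor \ref{itm:closetoplane} holds. From the failure of \ref{itm:smalldroponstrata} we extract $x_i \in S^k_{\eps,\eta_i}(u_i) \cap B_1(0)$ with $W_{\gamma\rho}(Q_i,u_i,x_i) < E_i - \eta'$. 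From the failure of \ref{itm:closetoplane} we greedily select $y_0^i,\ldots,y_k^i \in \{W_{2\eta_i}(Q_i,u_i,\cdot) \geq E_i - \eta_i\} \cap B_1(0)$ with $d(y_j^i, \langle y_0^i,\ldots,y_{j-1}^i \rangle) \geq \rho$; the iteration is possible because every affine plane of dimension at most $k-1$ is contained in some affine $(k-1)$-plane, so the negation of \ref{itm:closetoplane} applies to each of the intermediate affine hulls.

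After passing to subsequences, Theorem~\ref{thm:compactness} gives $u_i \to u_\infty$ and $Q_i \to Q_\infty$, with $Q_\infty$ constant since $[Q_i]_\alpha \leq \eta_i \to 0$; all chosen points converge with the $\rho$-independence preserved, and $x_\infty \in \partial\{u_\infty > 0\}$ by Hausdorff convergence of free boundaries. Upper-semi-continuity of $W_0$, continuity of $W_r$ at positive $r$, and the sup bound together give $W_r(Q_\infty, u_\infty, y_j) \equiv E_\infty$ for $r \in (0, 2]$; since $Q_\infty$ is constant, Weiss monotonicity (Theorem~\ref{thm:weiss-monotonicity}) forces $u_\infty$ to be $1$-homogeneous about each $y_j$. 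A standard Federer argument converts $1$-homogeneity at two points into translation invariance in the connecting direction, and iterating yields translation invariance of $u_\infty$ along $V_0 := \mathrm{span}(y_1 - y_0, \ldots, y_k - y_0)$, a $k$-dimensional subspace. Writing $x_\infty - y_0 = v_* + w_*$ with $v_* \in V_0$ and $w_* \in V_0^\perp$: the same continuity gives $W_{\gamma\rho}(u_\infty, x_\infty) \leq E_\infty - \eta'$, which (via $V_0$-invariance) would force $W_{\gamma\rho}(u_\infty, x_\infty) = W_{\gamma\rho}(u_\infty, y_0) = E_\infty$ if $w_* = 0$; so $w_* \neq 0$.

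Finally I would extract a tangent $v$ of $u_\infty$ at $x_\infty$, which inherits translation invariance along $V_0$ and is $1$-homogeneous about $0$. The key observation is that the rescalings $v_r(z) := r^{-1} u_\infty(x_\infty + rz)$ are $1$-homogeneous about $p_r := (y_0 - x_\infty)/r$, which escapes to infinity in the fixed direction $-\nu$ with $\nu := (x_\infty - y_0)/|x_\infty - y_0|$; substituting $s = 1 + t/|p_r|$ into the homogeneity identity $v_r((1-s) p_r + sz) = s\, v_r(z)$ and letting $r \to 0$ gives $v(z + t\nu) = v(z)$ for every $t \in \R$. Since $w_* \neq 0$, $\nu \notin V_0$, so $V_0 + \R\nu$ has dimension $k+1$, and $v$ is $(k+1)$-symmetric. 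Thus $u_\infty$ is $(k+1, \eps/2)$-symmetric on some $B_{r_*}(x_\infty)$, and $L^2_{\mathrm{loc}}$ convergence makes $u_i$ $(k+1,\eps)$-symmetric on $B_{r_*}(x_i)$ for all large $i$; since $\eta_i \to 0$ eventually $r_* \geq \eta_i$, contradicting $x_i \in S^k_{\eps, \eta_i}(u_i)$. I expect the main obstacle to be this last ``symmetry from a point at infinity'' step: one has to verify carefully that rescalings of $u_\infty$ about $x_\infty$ inherit an additional translation invariance from the $1$-homogeneity of $u_\infty$ about the distinct point $y_0$, and that the resulting direction $\nu$ is transverse to the already-established subspace $V_0$; the rest is bookkeeping with the compactness theorem and the definition of the effective strata.
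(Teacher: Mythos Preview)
Your argument is correct and follows essentially the same route as the paper: the paper factors the proof through an auxiliary Lemma (with a tube-radius parameter $\beta$) whose two conclusions are each proven by the same compactness/homogeneity contradiction you run, and then derives the dichotomy by the same greedy selection of $\rho$-independent high-density points you describe. Your single-contradiction packaging, handling the cases $x_\infty \in L$ and $x_\infty \notin L$ within one limit, is slightly more streamlined but not materially different.
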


One should keep in mind that $\eta << \rho$.  Recall also that $E \leq c(n, \Lambda)$.  Theorem \ref{thm:dichotomy} will be an easy consequence of the following:

\begin{lemma}\label{lem:dichotomy}
There are $\eta_0(n, \Lambda, E_0, \eta', \gamma, \rho, \eps, \alpha) << \rho$, $\beta(n, \Lambda, E_0, \eta', \rho, \gamma, \eps, \alpha) < 1$ so that the following holds: Let $u$ solve $(\star_{B_4(0), Q})$, with $|Q + 1/Q|_{C^0(B_4(0))} \leq \Lambda$, $0 \in \partial\fb$, and $\sup_{x\in B_1(0)} W_2(Q, u, x) \leq E \in [0, E_0]$.

Suppose $\eta \leq \eta_0$, $[Q]_{\alpha, B_4(0)} \leq \eta$, and there are points $y_0, \ldots, y_k \in B_1(0)$ satisfying
\[
y_i \not\in B_\rho(<y_0, \ldots, y_{i-1}>), \quad \text{and} \quad W_{2\eta}(y_i) \geq E - \eta, \quad \forall i = 0,\ldots, k.
\]
then writing $L = <y_0, \ldots, y_k>$, we have
\begin{gather}\label{eqn:lem-dichotomy-1}
W_{\gamma\rho} \geq E - \eta' \text{ on } B_{\beta}(L)\cap B_1(0)
\end{gather}
and
\begin{gather}\label{eqn:lem-dichotomy-2}
S^k_{\eps, \eta} \cap B_1(0) \subset B_{\beta}(L).
\end{gather}
\end{lemma}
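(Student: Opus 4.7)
My plan is a double compactness-contradiction argument, proving (i) and (ii) by separate subsequence arguments: first choose $\beta$ small to make (i) work, then shrink $\eta_0$ further so that (ii) also holds for that fixed $\beta$. The unifying input in both is that any $L^2$-limit of counterexamples is $k$-symmetric about $L$. More precisely, assuming the lemma fails with some $\eta_i \to 0$, Theorem \ref{thm:compactness} extracts $u_i \to u_\infty$ solving $(\star_{B_3(0), Q_0})$ with constant $Q_0$ (since $[Q_i]_{\alpha} \leq \eta_i \to 0$ forces $Q_i$ to converge to a constant); the non-collapsing condition $y_\ell^{(i)} \notin B_\rho(\langle y_0^{(i)}, \ldots, y_{\ell-1}^{(i)}\rangle)$ is closed, so $L^{(i)} \to L := \langle y_0, \ldots, y_k\rangle$ remains a genuine $k$-plane. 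Combining the hypothesis $W_{2\eta_i}(u_i, y_j^{(i)}) \geq E_i - \eta_i$ with almost-monotonicity $W_r \geq W_s - c_0 r^\alpha [Q_i]_\alpha$ (Theorem \ref{thm:weiss-monotonicity}) and the upper bound $W_2(u_i, x) \leq E_i$ on $B_1$, at any fixed $r \in (0, 2)$ one sandwiches $W_r(u_i, y_j^{(i)}) \in [E_i - \eta_i - O(\eta_i),\, E_i + O(\eta_i)]$. Continuity of $W_r$ at positive scales under Theorem \ref{thm:compactness} then gives $W_r(u_\infty, y_j) = E_\infty$ for all $r \in (0, 2]$. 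Exact monotonicity (constant $Q_0$, zero error term) forces $u_\infty$ to be $1$-homogeneous about each $y_j$, and hence $\nabla u_\infty(y) \cdot (y - y_j) = u_\infty(y)$ a.e. in $B_3(0)$. Subtracting at $y_0$ and $y_j$ gives $\nabla u_\infty \cdot (y_j - y_0) = 0$ a.e., so $u_\infty$ is translation-invariant along $L - y_0$, and combined with $1$-homogeneity at $y_0$ it is $k$-symmetric along $L$ with constant density $E_\infty$ on $L$.

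\textbf{Conclusions (i) and (ii).} For (i), suppose by contradiction $\beta_i = \eta_i = 1/i$ yields $x_i \in B_{1/i}(L^{(i)}) \cap B_1$ with $W_{\gamma\rho}(u_i, x_i) < E_i - \eta'$. Extract $x_i \to x_\infty \in L$ (since $d(x_i, L^{(i)}) \to 0$). Continuity of $W_{\gamma\rho}$ at the fixed positive scale gives $W_{\gamma\rho}(u_\infty, x_\infty) \leq E_\infty - \eta'$, contradicting $W_{\gamma\rho}(u_\infty, x_\infty) = E_\infty$ from the structural claim. This yields $\beta_1(\eta', \gamma, \rho, n, \Lambda, E_0, \alpha)$ and $\eta_1$ for which (i) holds. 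For (ii), fix this $\beta_1$ and suppose $\eta_i \to 0$ with $x_i \in S^k_{\eps, \eta_i}(u_i) \cap B_1$, $d(x_i, L^{(i)}) \geq \beta_1$. Then $x_i \to x_\infty$ with $d(x_\infty, L) \geq \beta_1 > 0$ and $x_\infty \in \partial\{u_\infty > 0\}$ (Hausdorff convergence of free-boundaries, Theorem \ref{thm:compactness}(3)). Classical dimension reduction applies: writing $u_\infty = \tilde u \circ \mathrm{proj}_{L^\perp}$ with $\tilde u$ $1$-homogeneous on $L^\perp$ about $0$ (after translating the origin to a point of $L$), and setting $w_\infty = \mathrm{proj}_{L^\perp}(x_\infty) \neq 0$, a short rescaling computation using $1$-homogeneity shows any blowup of $\tilde u$ at $w_\infty$ is translation-invariant along $\mathrm{span}(w_\infty)$. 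Hence some blowup of $u_\infty$ at $x_\infty$ is $(k+1)$-symmetric with axis $(L - y_0) \oplus \mathrm{span}(w_\infty)$, so at some $r_0 > 0$ (fixed, small), $u_\infty$ is $(k+1, \eps/2)$-symmetric in $B_{r_0}(x_\infty)$. $L^2_{loc}$-convergence of $u_i \to u_\infty$ together with $x_i \to x_\infty$ transfers this: $u_i$ is $(k+1, \eps)$-symmetric in $B_{r_0}(x_i)$ for $i$ large, and since $r_0$ is fixed while $\eta_i \to 0$, eventually $\eta_i \leq r_0 \leq 1$, contradicting $x_i \in S^k_{\eps, \eta_i}(u_i)$. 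Taking $\beta = \beta_1$ and $\eta_0 = \min(\eta_1, \eta_2)$ completes the proof.

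\textbf{Main obstacle.} The delicate point is establishing the $k$-symmetry of $u_\infty$: one must carefully track the almost-monotonicity error $c_0 r^\alpha[Q_i]_\alpha = O(\eta_i)$ together with the shrinking hypothesis scale $2\eta_i \to 0$, so as to transfer ``near-maximal density at scale $2\eta_i$'' into ``exact maximal density at every fixed positive scale'' where compactness and continuity of $W_r$ apply. This is what upgrades approximate $1$-homogeneity of $u_i$ about $y_j^{(i)}$ into exact $1$-homogeneity of $u_\infty$ about $y_j$. Once that rigidity is in hand, both (i) and (ii) follow from short compactness arguments plus the standard blowup dimension-reduction for (ii).
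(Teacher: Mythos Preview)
Your proof is correct and follows essentially the same approach as the paper's: a double compactness--contradiction argument, first choosing $\beta$ to make \eqref{eqn:lem-dichotomy-1} hold, then with that $\beta$ fixed shrinking $\eta$ further for \eqref{eqn:lem-dichotomy-2}, with the common structural input being that the limit $u_\infty$ is $k$-symmetric along $L$. You are somewhat more explicit than the paper in deriving the $k$-symmetry (sandwiching $W_r$ via almost-monotonicity and then subtracting the Euler relations $\nabla u_\infty \cdot (y - y_j) = u_\infty$) and in spelling out the dimension-reduction step for \eqref{eqn:lem-dichotomy-2}, but the ideas are the same.
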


\begin{proof}[Proof of Lemma \ref{lem:dichotomy}]
First, towards a contradiction, suppose \eqref{eqn:lem-dichotomy-1} fails.  Then we have a sequence $u_j$ solving $(\star_{B_4(0), Q_j})$, and collections $E_j$, $y_{ij}$, $L^k_j$, $\eta_j$, and $\beta_j$, which satisfy the hypotheses with $[Q_j]_{\alpha, B_4(0)} \leq \eta_j \to 0$, $\beta_j \to 0$, but for each $j$ fail \eqref{eqn:lem-dichotomy-1} at some $x_j \in B_{\beta_j}(L_j)\cap B_1(0)$.

Passing to a subsequence, we can assume
\[
u_j \to u \text{ as in Theorem \ref{thm:compactness}}, \quad Q_j \to Q \equiv Q_0 \text{ a constant},
\]
and
\[
E_j \to E , \quad y_{ij} \to y_i, \quad L_j \to L, \quad x_j \to x \in \overline{B_1(0)} \cap L.
\]
Since $\rho$ is fixed, the $y_i$ span $L$.  The limit $u$ is a solution to $(\star_{B_3(0), Q_0})$.

By continuity of density, $\sup_{z \in B_1(0)} W_2(Q_0, u, z) \leq E$ and $W_{\gamma \rho}(Q_0, u, x) \geq E - \eta'$ at $x \in L \cap \overline{B_1(0)}$.  By upper-semi-continuity we know $W_0(Q_0, u, y_i) \geq E$.  Therefore $u$ is $1$-homogenous in at $y_i$ in $B_2(y_i)$ for each $i$, and so $u$ is independent of $L$ in $B_{1+\delta}(0) \subset \cup_i B_2(y_i)$ for some $\delta > 0$ (depending on the arrangement of the $y_i$'s).  In particular, we must have $W_0(Q_0, u, x) = E > W_{\gamma \rho}(Q_0, u, x)$, contradicting the sharp monotonicity.


We now suppose, again towards a contradiction, that \eqref{eqn:lem-dichotomy-2} fails.  We are allowed to shrink $\eta$, as this will only strengthen our hypothesis, and weaken conclusion \eqref{eqn:lem-dichotomy-1}.  We fix $\beta$, however.  We then have a sequences $u_j$, $E_j$, $y_{ij}$, $L_j$, $\eta_j$, which satisfy the hypotheses of Lemma \ref{lem:dichotomy} with $\eta_j \to 0$, but for each $j$ fail \eqref{eqn:lem-dichotomy-2}: there is some $x_j \in S^k_{\eps, \eta_j} \cap B_1(0) \setminus B_\beta(L_j)$.

We can assume $u_j$, $y_{ij}$, $L_j$, $x_j$, $E_j$ converge as before.  By the same argument as above, for some $\delta > 0$ the resulting $u$ will be $k$-symmetric with respect to $L$ in $B_{1+\delta}(0)$.  Since $x \in \overline{B_1(0)} \setminus B_\beta(L)$, any blow-up of $u$ at $x$ will be $(k+1)$-symmetric.  In particular, for some fixed $r > 0$, $u_j$ will be $(k+1, \eps)$-symmetric in $B_r(x)$.  This is a contradiction.
\end{proof}


\begin{proof}[Proof of Theorem \ref{thm:dichotomy}]
If we can find $y_0, \ldots, y_k$ as in Lemma \ref{lem:dichotomy}, then conclusion Theorem \ref{thm:dichotomy}\ref{itm:smalldroponstrata} is immediate.  Otherwise, failing to find the $y_i$s implies there is some $(k-1)$-space $\ell^{k-1}$, so that
\[
y \in B_1(0) \setminus B_\rho(\ell) \implies W_{2\eta}(y) < E - \eta,
\]
which is conclusion Theorem \ref{thm:dichotomy}\ref{itm:closetoplane}.
\end{proof}

\section{Global packing estimate}



Our strategy is the following: we cook up a covering of $S^k_\eps$ with balls of either small radius, or with a definite amount of density drop, and demonstrate a packing estimate on these balls.  The balls with small radius give us the right packing estimate.  The other balls do not, but the density drop means we can recurse inside.  Each time we recurse we drop a fixed amount of density, and so errors will only accumulate for a finite number of steps.


The Key Packing Estimate is the following.  Recall $c_0(n,\alpha)$ is the constant from Theorem \ref{thm:weiss-monotonicity}.
\begin{theorem}[Key Packing Estimate]\label{thm:key-packing}
There is an $\eta(n, \Lambda, \eps, \alpha)$ so that the following holds: Let $u$ be a solution to $(\star_{B_5(0), Q})$ with $|Q + 1/Q|_{C^0(B_5(0))} \leq \Lambda$, $[Q]_{\alpha, B_5(0)} \leq \frac{\eta}{2c_0}$, and $0 \in \partial \fb$.  Write $E = \sup_{B_{2}(0)} W_2$.

Given $0 < R \leq 1$, there is a collection of balls $\{B_{r_x}(x)\}_{x \in \cU}$, with $x \in B_1(0)\cap S^k_{\eps, \eta R}$ and $r_x \leq 1/10$, which satisfy the following properties:
\begin{enumerate}[label=(\Alph*)]
\item\label{itm:ucovering} Covering:
\[
S^k_{\eps, \eta R} \cap B_1(0) \subset \bigcup_{x \in \cU} B_{r_x}(x) .
\]

\item\label{itm:upacking} Packing:
\[
\sum_{x \in \cU} r_x^k \leq c(n, \Lambda, \eps, \alpha).
\]

\item\label{itm:uenergydrop} Energy drop: for every $x \in \cU$ we have $r_x \geq R$, and either
\[
r_x = R \quad \text{or} \quad \sup_{B_{2r_x}(x)} W_{2r_x} \leq E - \eta/2.
\]
\end{enumerate}
\end{theorem}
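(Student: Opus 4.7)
\emph{Proof plan.} The plan is the top-down ``good-ball / bad-ball'' stopping-time tree construction sketched in Section~2. Starting from the single active ball $B_1(0)$, we iterate: at each active ball $B_r(x)$ of radius $r \geq R$ we apply the Key Dichotomy (Theorem~\ref{thm:dichotomy}) with its free parameters $\rho, \gamma, \eta'$ fixed (with $\eta' = \eta/2$ and $\rho$ a small dimensional quantity) and with $\eta = \eta(n, \Lambda, \eps, \alpha)$ small enough that the hypotheses of all downstream tools are satisfied.

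At a \emph{good} ball (case~(i) of the Dichotomy), every point $z \in S^k_{\eps, \eta} \cap B_r(x)$ satisfies $W_{\gamma \rho r}(z) \geq E - \eta/2$, so Weiss density drops across intermediate scales are uniformly small. This is precisely the hypothesis powering the forthcoming $L^2$-$\beta$ estimate (Theorem~\ref{thm:L2-est}). Combining it with Lemma~\ref{lem:centered-packing} and Discrete Reifenberg (Theorem~\ref{thm:discrete-reifenberg}) produces a Vitali cover of $S^k_{\eps, \eta R} \cap B_r(x)$ by balls of radius exactly $R$ with packing $\leq c(n, \Lambda, \eps, \alpha) \cdot r^k$. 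These balls are output with $r_x = R$ and satisfy (C) trivially.

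At a \emph{bad} ball (case~(ii) of the Dichotomy), the high-density set $\{ W_{2\eta r} \geq E - \eta\} \cap B_r(x)$ lies in a tubular neighborhood $B_{\rho r}(\ell^{k-1})$ of an affine $(k-1)$-plane $\ell$. Cover the exterior $B_r(x) \setminus B_{(\rho+2\eta) r}(\ell)$ by a Vitali family of balls of radius $\max\{R, \eta r\}$ centered at stratum points; by Weiss almost-monotonicity (Theorem~\ref{thm:weiss-monotonicity}) and the hypothesis $[Q]_\alpha \leq \eta/(2 c_0)$, we have $\sup W_{2 r_x} \leq E - \eta/2$ on each such ball, so they are output satisfying (C). Cover the tube by $\lesssim \rho^{1-k}$ balls of radius $\rho r$ centered at stratum points; these are added to the active queue for the next iteration.

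Termination and the packing bound follow by careful accounting. Bad-level radii shrink by a factor $\rho$ per iteration, so after at most $\lceil \log_{1/\rho} R^{-1} \rceil$ levels every remaining active ball has radius $\leq R$ and is output with $r_x = R$. The number of active balls at depth $j$ of a bad chain is $\lesssim \rho^{j(1-k)}$, their radii are $\rho^j r_0$, and the output contribution at each level is $\lesssim \eta^{k-n} \rho^j r_0^k$; summing the geometric series $\sum_j \rho^j < \infty$ yields a total packing $\leq c(n, \Lambda, \eps, \alpha) r_0^k$. The main technical obstacle is the friction between property~(C), which demands \emph{definite} density drop on \emph{each} output ball, and the hypothesis of Theorem~\ref{thm:L2-est}, which demands \emph{uniformly small} drops across the \emph{entire stratum}. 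The Key Dichotomy reconciles these by partitioning each scale into regimes where one or the other condition holds, which is exactly what makes the recursion close.
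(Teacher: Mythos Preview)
Your outline captures the bad-ball mechanism correctly, but there is a real gap in the good-ball case.  The Key Dichotomy at a good ball $B_r(x)$ guarantees only $W_{\gamma\rho r}(z)\ge E-\eta'$ for $z\in S^k_{\eps,\eta}\cap B_r(x)$; this is density control at \emph{one} intermediate scale $\gamma\rho r$, not at every scale down to $R$.  In particular it does \emph{not} give $W_{\eta R}(p)\ge E-\eta$ at the center of a radius-$R$ ball, which is precisely the centered-drop hypothesis Lemma~\ref{lem:centered-packing} needs in order to pack the radius-$R$ Vitali cover you describe.  So a good ball cannot be terminally resolved into radius-$R$ balls in one step, and the phrase ``density drops across intermediate scales are uniformly small'' is unjustified.

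The paper's remedy is to iterate scale-by-scale inside a good ball (the \emph{good tree}): at each step one descends by a factor $\rho$, classifies the new balls as good or bad, recurses only into the good ones, and stops at the bad ones.  Because every stop/bad center lives in a good ball of the \emph{previous} scale, one gets exactly the centered-drop hypothesis of Lemma~\ref{lem:centered-packing} at the matching radius, and hence a packing bound $c_1(n)r^k$ on the leaves.  The leaves of a good tree are \emph{bad} balls at assorted intermediate scales; symmetrically, iterating your bad-ball step yields a \emph{bad tree} whose leaves are \emph{good} balls, with the small packing $2c_2(n)\rho\, r^k$.  One then alternates tree types; the choice of $\rho$ so that $2c_1c_2\rho\le 1/2$ forces geometric decay of the total leaf content and closes the packing.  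Your accounting handles only pure bad chains and treats good balls as terminal, so it misses exactly this alternation, which is the mechanism by which the not-small good-tree constant $c_1$ is absorbed.
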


Sections \ref{sec:thel2estimate}, \ref{section:centered-packing}, and \ref{section:main-const} are devoted to proving Theorem \ref{thm:key-packing}.  Let us first see how this proves Theorem \ref{thm:main-mass}.

\begin{proof}[Proof of Theorem \ref{thm:main-mass} given Theorem \ref{thm:key-packing}]
We know $E \leq c(n, \Lambda)$ from equation \eqref{eqn:wcontrolled}.  Ensuring $c(n, \Lambda, \eps, \alpha)$ is sufficiently large, it suffices to prove Theorem \ref{thm:main-mass} for $r < \eta$.  Further, by scaling we can reduce to the case $[Q]_{\alpha, B_5(0)} \leq \eta/2c_0$.  Let us detail this.  For the duration of the proof $c$ denotes a generic constant depending only on $(n, \Lambda, \eps, \alpha)$.

Suppose we can show Theorem \ref{thm:main-mass} with $[Q]_\alpha \leq \eta/2c_0$.  Now for a general bound $[Q]_\alpha \leq \Lambda$, choose a Vitali cover $\{B_\rho(x_i)\}_i$ of $S^k_{\eta, R} \cap B_1(0)$ (with the $\rho/5$-balls disjoint) with $\rho$ chosen so that
\[
(5\rho)^\alpha \Lambda = \frac{\eta}{2c_0}.
\]
We can assume $R < \rho$, by ensuring $c$ is sufficiently large.  The dilated solution $u_{x_i, 4\rho}$ satisfies $(\star_{B_5(0), \hat Q})$ with $[\hat Q]_{\alpha, B_5(0)} \leq \eta/2c_0$.  Therefore by our hypothesis we have the mass bound
\[
R^{k-n} |B_R(S^k_{\eps, R}) \cap B_\rho(x_i)| \leq c \rho^k.
\]
Since there are at most $c(n)\rho^{-n}$ balls $\{B_\rho(x_i)\}_i$, we deduce the scale-$1$ bound
\[
R^{k-n} |B_R(S^k_{\eps, R}) \cap B_1(0)| \leq c (2c_0 \Lambda/\eta)^{(n-k)/\alpha} .
\]

Therefore we shall assume $[Q]_{\alpha, B_5(0)} \leq \eta/2c_0$.  Use Theorem \ref{thm:key-packing} to build the covering $\cU_1$ in $B_1(0)$.  If every $r_x = R$, then the packing and covering estimates of Theorem \ref{thm:key-packing} \ref{itm:ucovering}, \ref{itm:upacking} imply Theorem \ref{thm:main-mass} directly:
\begin{gather}\label{eqn:minkowski-proof}
R^{k-n} |B_R(S^k_{\eps, \eta R}) \cap B_1(0)| \leq \omega_n R^{k-n} \sum_{x \in \cU} (2R)^n = \omega_n 2^n \sum_{x \in \cU} r_x^k \leq c.
\end{gather}

Otherwise, if some $r_x > R$, let use Theorem \ref{thm:key-packing} to build a (finite!) sequence of refinements $\cU_1, \cU_2, \cU_3, \ldots$, which satisfy for each $i$ the following properties:
\begin{enumerate}
\item[($\mathrm A_i$)] covering:
\[
S^k_{\eps, \eta R} \cap B_1(0) \subset \bigcup_{x \in \cU_i} B_{r_x}(x), 
\]

\item[($\mathrm B_i$)] packing:
\[
\sum_{x \in \cU_i} r_x^k \leq c  \left( 1 + \sum_{x \in\cU_{i-1}} r_x^k \right) ,
\]

\item[($\mathrm C_i$)] energy drop: for every $x \in \cU_i$, we have $r_x \geq r$, and either
\[
r_x = r\quad\text{or}\quad \sup_{B_{2r_x}(x)} W_{2r_x} \leq E - i \eta/2.
\]

\item[($\mathrm D_i$)] radius control:
\[
\sup_{x \in \cU_i} r_x \leq 10^{-i} \quad \text{ and } \quad \cU_i \subset B_{1 + 10^{-i}}(0) \cap S^k_{\eps, \eta R}.
\]
\end{enumerate}

For any $r \leq 1$ and $x \in B_2(0) \cap \partial \fb$, we have from Theorem \ref{thm:weiss-monotonicity} that 
\[
W_r(x) \geq W_0(x) - c_0 [Q]_{\alpha, B_5(0)} \geq -\eta/2.
\]
Therefore, once $i \geq 2 + 2 E/\eta$, then every $x \in \cU_i$ will necessarily satisfy $r_x = R$.  Then, as in \eqref{eqn:minkowski-proof}, we obtain Theorem \ref{thm:main-mass} with a bound like
\[
R^{k-n}|B_R(S^k_{\eps, \eta R}) \cap B_1(0)| \leq c^{2 + 2E/\eta}.
\]

We have already constructed $\cU_1$, which satisfies $(A_1), (B_1), (C_1), (D_1)$ from Theorem \ref{thm:key-packing}.  Suppose, inductively, we have constructed $\cU_{i-1}$ satisfying properties $(A_{i-1})$, $(B_{i-1})$, $(C_{i-1}), (D_{i-1})$.

For each $x \in \cU_{i-1}$ with $r_x > R$, we wish to apply Theorem \ref{thm:key-packing} at scale $B_{r_x}(x)$ to obtain a new collection $\cU_{i, x}$.  Since $x \in B_{1+9^{-1}}(0)$, and $r_x < 1/10$, from how $Q$ scales we see that $u$, $Q$ continue satisfy the hypotheses of Theorem \ref{thm:key-packing} at scale $B_{r_x}(x)$.

However, from $(C_{i-1})$ we now have $\sup_{B_{2r_x}(x)} W_{2r_x} \leq E - (i-1) \eta/2$.  Therefore Theorem \ref{thm:key-packing}\ref{itm:uenergydrop} for the covering $\cU_{i, x}$ becomes
\begin{gather}\label{eqn:U-drop}
\sup_{B_{2r_y}(y)} W_{2r_y} \leq E - i \eta/2 \quad \forall y \in \cU_{i, x} \text{ with } r_y > R.
\end{gather}
Theorem \ref{thm:key-packing}\ref{itm:ucovering} is of course
\begin{gather}\label{eqn:U-covering}
S^k_{\eps, \eta R} \cap B_{r_x}(x) \subset \bigcup_{y \in \cU_{i, x}} B_{r_y}(y),
\end{gather}
and Theorem \ref{thm:key-packing}\ref{itm:upacking} becomes
\begin{gather}\label{eqn:U-packing}
\sum_{y \in \cU_{i, x}} r_y^k \leq c(n,\Lambda, \eps, \alpha) r_x^k.
\end{gather}
Moreover, from the construction of Theorem \ref{thm:key-packing} and $(D_{i-1})$ we have
\begin{equation}\label{eqn:U-radius}
\sup_{y \in \cU_{i, x}} r_y \leq 10^{-1} r_x \leq 10^{-i}.
\end{equation}

We then set
\[
\cU_i = \{ x \in \cU_{i-1} : r_x = R \} \cup \bigcup_{\{x \in \cU_{i-1} \,\, : \,\, r_x > R\}} \cU_{i, x}.
\]
From \eqref{eqn:U-drop}, \eqref{eqn:U-covering}, \eqref{eqn:U-packing}, and \eqref{eqn:U-radius}, the new $\cU_i$ satisfies inductive hypotheses $(A_i), (B_i), (C_i), (D_i)$.  This completes the construction of the covering refinements, and finishes the proof of Theorem \ref{thm:main-mass}.
\end{proof}

\section{The $L^2$-estimate}\label{sec:thel2estimate}


We prove the effective version of: ``for a $1$-homogenous $u$, the $S^k_\eps(u)$ is contained in some $k$-plane.''  Precisely, we show the $\beta$-numbers for $S^k_\eps$ are controlled by the density drop, whenever we are almost $1$-homogenous.

\begin{theorem}\label{thm:L2-est}
There is a $\delta(n, \Lambda, \eps, \alpha)$, so that the following holds:  Let $u$ be a solution to $(\star_{B_{10 r}(x), Q})$, with $|Q + 1/Q|_{C^0(B_{10r}(x))} \leq \Lambda$, $[Q]_{\alpha, B_{10r}(x)} \leq \delta$, and $x \in \partial \fb$.  Suppose
\begin{gather}\label{eqn:L2-est-hyp}
\left\{ \begin{array}{l} \text{$u$ is $(0, \delta)$-symmetric in $B_{8r}(x)$} \\
 \text{$u$ is \emph{not} $(k+1, \eps)$-symmetric in $B_{8r}(x)$}. \end{array}\right.
\end{gather}
Then for any finite Borel measure $\mu$, we have
\begin{gather}\label{eqn:L2-est-concl}
\beta^k_{\mu, 2}(x, r)^2 \leq \frac{c(\Lambda, \eps, n,\alpha)}{r^k} \int_{B_r(x)} W_{8r}(y) - W_r(y)+ c_0 [Q]_{\alpha, B_{10 r}(x)} (8r)^\alpha d\mu(y).
\end{gather}
\end{theorem}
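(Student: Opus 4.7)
I will follow the Naber--Valtorta $L^2$-best-plane strategy. The Weiss monotonicity defect controls how far $u$ is from being $1$-homogeneous about each point $y$; a Cauchy--Schwarz / second-moment manipulation converts this into a control on the smallest eigenvalues of the moment matrix of $\mu$; finally the hypothesis of non-$(k{+}1,\eps)$-symmetry forces a spectral gap that turns this into the desired $\beta$-bound. After normalizing by graph dilation to $x = 0$, $r = 1$, set
\[
H(z,y) := u(z) - (z-y)\cdot Du(z), \qquad D(y) := W_8(u,y) - W_1(u,y) + c_0 8^\alpha [Q]_\alpha.
\]
For any $y \in B_1(0)$ we have $B_8(y) \subset B_9(0) \subset B_{10}(0)$, so Theorem \ref{thm:weiss-monotonicity} at center $y$ gives $\int_{B_8(y)\setminus B_1(y)} |H(z,y)|^2 |z-y|^{-n-2}\,dz \leq D(y)$. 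Restricting to a fixed annulus $S \subset B_5(0) \setminus B_3(0)$ (which lies in $B_8(y)\setminus B_1(y)$ for every $y \in B_1(0)$ with the weight $|z-y|$ comparable to $1$) yields the clean bound $\int_S |H(z,y)|^2\,dz \leq C D(y)$.

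\textbf{Spectral setup and key inequality.} Pick a reference point $\bar y \in B_1(0)$ near the $\mu$-barycenter with $D(\bar y)\,\mu(B_1(0)) \leq C \int_{B_1(0)} D\,d\mu$ (a straightforward Markov-type selection; replacing the exact barycenter with $\bar y$ only increases the eigenvalues of the moment matrix, so the bound on $\beta^k_{\mu,2}$ is preserved by Weyl). Form
\[
M := \int_{B_1(0)}(y - \bar y) \otimes (y - \bar y)\,d\mu(y),
\]
with ordered eigenvalues $\lambda_1 \geq \cdots \geq \lambda_n$ and orthonormal eigenvectors $v_1,\ldots,v_n$. Since the best $k$-plane through $\bar y$ is the span of $v_1,\ldots,v_k$, we have $\beta^k_{\mu,2}(0,1)^2 \leq \sum_{i=k+1}^n \lambda_i$. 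The algebraic identity $(y - \bar y)\cdot Du(z) = H(z,y) - H(z,\bar y)$ together with Fubini gives $\lambda_i\, v_i \cdot Du(z) = \int_{B_1(0)} [v_i \cdot (y - \bar y)]\,[H(z,y) - H(z,\bar y)]\,d\mu(y)$, and Cauchy--Schwarz yields $\lambda_i^2 |v_i \cdot Du(z)|^2 \leq \lambda_i \int_{B_1(0)} |H(z,y) - H(z,\bar y)|^2\,d\mu(y)$. Dividing by $\lambda_i$, integrating over $z \in S$, and combining with the defect bound and the choice of $\bar y$,
\begin{equation*}
\lambda_i \int_S |v_i \cdot Du(z)|^2\,dz \leq C \int_{B_1(0)} D(y)\,d\mu(y). \tag{$\ast$}
\end{equation*}

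\textbf{Coercivity and conclusion.} A compactness argument provides the needed coercivity: if $u$ is $(0,\delta)$-symmetric but not $(k+1,\eps)$-symmetric in $B_{8}(0)$, then there exists $c_\star(n,\Lambda,\eps) > 0$ such that $\max_j \int_S |w_j \cdot Du|^2\,dz \geq c_\star$ for \emph{any} orthonormal $w_1,\ldots,w_{k+1} \in \R^n$. Indeed, a contradiction sequence $u_j \to u_\infty$ would (by Theorem \ref{thm:compactness} together with $\delta_j\to 0$) produce a $1$-homogeneous $u_\infty$ with $w_j \cdot Du_\infty = 0$ a.e.\ on $S$; but $1$-homogeneity forces $Du_\infty$ to be $0$-homogeneous (constant along rays), so $w_j \cdot Du_\infty \equiv 0$ on all of $\R^n\setminus\{0\}$, making $u_\infty$ exactly $(k+1)$-symmetric and contradicting $L^2$-closedness of the non-$(k+1,\eps)$-symmetry condition. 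Now let $G := \{ i : \int_S |v_i\cdot Du|^2 \geq c_\star \}$. Coercivity gives $|G^c| \leq k$, so $\{1,\ldots,k+1\}\cap G \neq \emptyset$ and some $i_\star \leq k+1$ lies in $G$. By $(\ast)$, $\lambda_{i_\star} \leq C \int D\,d\mu / c_\star$; since eigenvalues are decreasing, $\lambda_j \leq \lambda_{i_\star}$ for all $j \geq k+1$, and summing yields $\sum_{j=k+1}^n \lambda_j \leq (n-k)C \int D\,d\mu / c_\star$. Undoing the rescaling produces \eqref{eqn:L2-est-concl}.

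\textbf{Main obstacle.} The core difficulty is the coercivity step: the passage from ``$w \cdot Du_\infty = 0$ on a single annulus $S$'' to ``$w \cdot Du_\infty \equiv 0$ everywhere'' depends crucially on the $1$-homogeneity of $u_\infty$, which in turn requires both the strong $L^2_{loc}$ convergence of $Du_j$ from Theorem \ref{thm:compactness}\,(2) and the closedness of the almost-symmetry conditions under $L^2$ limits. A secondary bookkeeping hurdle is the ``barycenter defect'' $D(\bar y)$ appearing in the Cauchy--Schwarz step: it must be absorbed into $\int D\,d\mu$, handled either by the Markov-type selection of $\bar y$ described above or by the Weyl rank-one observation that the shift of reference point only increases the relevant eigenvalues of $M$. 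Everything else --- the algebraic identity for $H(z,y)$, the Courant--Fischer comparison, and the conversion via the ordered index set $G$ --- is routine once these two points are in hand.
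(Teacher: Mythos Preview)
Your proof is correct and follows essentially the same Naber--Valtorta approach as the paper: second-moment matrix of $\mu$, Cauchy--Schwarz against the Weiss defect integrand $H(z,y)$, and a compactness/coercivity argument on the annulus to exploit non-$(k{+}1,\eps)$-symmetry. The one substantive difference is your treatment of the reference point: the paper takes $\bar y$ to be the exact $\mu$-barycenter $X$, which makes $\fint (v_i\cdot(y-X))\,d\mu = 0$ and kills the ``barycenter defect'' term $H(z,\bar y)$ outright, so neither the Markov selection of $\bar y$ nor the Weyl rank-one comparison is needed; this yields the cleaner pointwise bound $\lambda_i\,(v_i\cdot Du(z))^2 \leq \fint_{B_r(x)} |H(z,y)|^2\,d\mu(y)$ directly. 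Your workaround is valid but unnecessary.
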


\begin{remark}
Here is a baby case illustrating why Theorem \ref{thm:L2-est} should be true.  Suppose for simplicity $r = 1$, $x = 0$, $Q$ is constant, and the RHS is zero.  Then at each point $y \in \spt\mu$, $u$ must be $0$-symmetric in $A_{1,8}(y)$.  In particular, if there are $k+1$ linearly independent points in $\spt\mu \cap B_1(0)$, then $u$ must be $(k+1)$-symmetric in $A_{3,4}(0)$.  But then by $(0, \delta)$-symmetry of $B_8$, we have that $B_8$ is $(k+1, \eps_2(\delta))$-symmetric, with $\eps_2 \to 0$ as $\delta \to 0$.  Ensuring $\delta$ is sufficiently small gives a contradiction, and therefore $\spt\mu \cap B_1(0) \subset$ $k$-plane.
\end{remark}

\begin{proof}
Let $X$ be the $\mu$ center of mass of $B_r(x)$, and define the non-negative bilinear form
\[
Q(v, w) \equiv \fint_{B_r(x)} (v \cdot(x - X) )( w \cdot(x - X)) d\mu(x).
\]
Here $\cdot$ denotes the standard Euclidean inner product.  Let $v_1, \ldots, v_n$ be a orthonormal eigenbasis, and $\lambda_1 \geq \ldots \geq \lambda_n \geq 0$ the associated eigenvalues.  It's easy to check that
\[
V^k_{\mu,2}(x, r) = X + \mathrm{span}\{v_1, \ldots, v_k\}, \quad \beta^k_{\mu, 2}(x, r)^2 = \frac{\mu(B_r(x))}{r^k} (\lambda_{k+1} + \ldots + \lambda_n).
\]

We first claim that, for any $i$, and any $z$, 
\begin{gather}\label{eqn:evalue-density-point}
\lambda_i (v_i \cdot Du(z))^2 \leq \fint_{B_r(x)} |u(z) - (z - y) \cdot Du(z)|^2 d\mu(y).
\end{gather}
We calculate, using the definition of center of mass, 
\begin{align*}
\lambda_i (v_i \cdot Du(z)) 
&= Q(v_i, Du(z)) \\
&= \fint_{B_r(x)} (v_i \cdot( x - X))( Du(z) \cdot( x - X)) d\mu(y) \\
&= \fint (v_i \cdot(x - X))( u(z) - (z - x)\cdot Du(z)) d\mu(y) \\
&\leq \lambda_i^{1/2} \left( \fint |u(z) - (z - y)\cdot Du(z)|^2  d\mu(y) \right)^{1/2}.
\end{align*}
This proves \eqref{eqn:evalue-density-point}.

Writing $A_{3r, 4r}(x)$ for the annulus $B_{4r}(x) \setminus B_{3r}(x)$, we calculate
\begin{align*}
&\lambda_i r^{-n-2} \int_{A_{3r,4r}(x)} (v_i \cdot Du(z))^2 dz \\
&\leq r^{-n-2} \int_{A_{3r,4r}(x)} \fint_{B_r(x)} |u(z) - (z - y)\cdot Du(z)|^2 d\mu(y) dz \\
&\leq 5^n \fint_{B_r(x)} \int_{A_{3r,4r}(x)} |u(z) - (z - y)\cdot Du(z)|^2 |z - y|^{-n-2} dz d\mu(y) \\
&\leq 5^n \fint_{B_r(x)} \int_{A_{r,8r}(y)} |u(z) - (z - y) \cdot Du(y)|^2 |z - y|^{-n-2} dz d\mu(y) \\
&= c(n) \fint_{B_r(x)} W_{8r}(y) - W_r(y) + c_0 [Q]_\alpha (8r)^\alpha  d\mu(y) . 
\end{align*}

Up until now we haven't used hypothesis \eqref{eqn:L2-est-hyp}.  Our second claim is: ensuring $\delta(n, \Lambda, \eps, \alpha)$ is sufficiently small, then there exists some $c(n, \Lambda, \eps, \alpha)$ so that for \emph{any} orthonormal vectors $v_1, \ldots, v_{k+1}$, we have
\begin{gather}\label{eqn:lambdal2-lower-bound}
\frac{1}{c(n, \Lambda, \eps, \alpha)} \leq r^{-n-2} \int_{A_{3r, 4r}(x)} \sum_{i=1}^{k+1} (v_i \cdot Du(z))^2 dz.
\end{gather}
Notice \eqref{eqn:lambdal2-lower-bound} is scale-invariant: in our proof there is no loss in assuming $B_r(x) = B_1(0)$.  Suppose, towards a contradiction, \eqref{eqn:lambdal2-lower-bound} is false.  Then we have a sequence of $u_j$ solving $(\star_{B_{10}(0), Q_j})$ with $|Q_j + 1/Q_j|_{C^0} \leq \Lambda$ and $[Q_j]_\alpha \leq 1/j$, and orthonormal $v_{ij}$, so that
\[
\left\{ \begin{array}{l} \text{$u_j$ is $(0, 1/j)$-symmetric in $B_{8}(0)$} \\
 \text{$u_j$ is \emph{not} $(k+1, \eps)$-symmetric in $B_{8}(0)$} \end{array}\right.
\]
but
\begin{equation}\label{eqn:L2-counter-ineq}
1/j \geq \int_{A_{3, 4}(0)} \sum_{i=1}^{k+1} (v_{ij} \cdot Du_j(z))^2 dz .
\end{equation}

Passing to a subsequence, we can assume
\[
u_j \to u \text{ as in Theorem \ref{thm:compactness}}, \quad v_{ij} \to v_i,
\]
where $u$ solves $(\star_{B_{9}(0), Q})$ with $Q \equiv Q(0)$ constant, and $u$ is $0$-symmetric in $B_8(0)$.  From \eqref{eqn:L2-counter-ineq} we deduce $Du \cdot v_i \equiv 0$ in $A_{3,4}(0)$ for each $i = 1, \ldots, k+1$, and hence $u$ is $(k+1)$-symmetric in $B_8$.  But then the $u_j$ are $(k+1, o(1))$-symmetric in $B_8(0)$, a contradiction.  This proves \eqref{eqn:lambdal2-lower-bound}.

Combining \eqref{eqn:evalue-density-point} and \eqref{eqn:lambdal2-lower-bound} we deduce
\begin{align*}
\beta^k_{\mu, 2}(x, r)^2
&\leq \frac{\mu(B_r(x))}{r^k} n \lambda_{k+1} \\
&\leq \frac{\mu(B_r(x))}{r^k} n \cdot c(n, \Lambda, \eps,\alpha) \sum_{i=1}^{k+1} \frac{\lambda_i}{r^{n+2}} \int_{A_{3r, 4r}(x)} (v_i \cdot Du(z))^2 dz \\
&\leq \frac{c(n, \Lambda, \eps, \alpha)}{r^k} \int_{B_r(x)} W_{8r}(y) - W_r(y) + c_0 [Q]_\alpha (8r)^\alpha d\mu(y).
\end{align*}
This completes the proof of Theorem \ref{thm:L2-est}.
\end{proof}



\section{Centered density drop gives packing}\label{section:centered-packing}


We demonstrate how, if we have a collection of disjoint balls with small density drop \emph{at the centers}, then the $\beta$-estimate of Theorem \ref{thm:L2-est} and discrete Reifenberg Theorem \ref{thm:discrete-reifenberg} give good packing estimates.  The key idea is that the density drops are summable across scales (they are essentially a telescoping series), which allows us to ensure uniformly small $\beta$-number estimates.  The slight complication is that to sum \eqref{eqn:L2-est-concl} across scales we require packing bounds at lower scales.  We therefore must inductively prove packing scale-by-scale.


\begin{lemma}\label{lem:centered-packing}
There is an $\eta_1(n, \Lambda, \eps, \alpha)$ so that the following holds.  Take $\eta \leq \eta_1$, and let $u$ be a solution to $(\star_{B_5(0), Q})$ with $|Q + 1/Q|_{C^0(B_5(0))} \leq \Lambda$ and $[Q]_\alpha \leq \eta$.  Choose $R > 0$, and suppose $E \geq \sup_{B_1(0)} W_2$.

If $\{B_{2r_p}(p)\}_p$ is a collection of disjoint balls,satisfying
\begin{equation}\label{eqn:centered-balls}
W_{\eta r_p}(p) \geq E - \eta, \quad p \in S^k_{\eps, R} \cap B_1(0), \quad R \leq r_p \leq 1,
\end{equation}
then we have
\[
\sum_p r_p^k \leq c(n).
\]
\end{lemma}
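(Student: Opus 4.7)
The strategy is to apply the Discrete-Reifenberg Theorem (Theorem \ref{thm:discrete-reifenberg}) to the packing measure $\mu := \sum_p r_p^k \delta_p$. Since the balls $B_{r_p}(p)$ are disjoint with $r_p \leq 1$, it suffices to verify the $\beta$-hypothesis
$$\int_0^{2s} \int_{B_s(x)} \beta^k_{\mu,2}(z, t)^2 \, d\mu(z) \, \frac{dt}{t} \leq \delta_{DR}(n) \, s^k \quad \text{for all } x \in B_1(0),\ 0 < s \leq 1,$$
at which point $\sum_p r_p^k = \mu(B_1(0)) \leq C_{DR}(n) =: c(n)$ follows immediately.

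First I would check that the pointwise $L^2$-estimate of Theorem \ref{thm:L2-est} applies at each pair $(p, t)$ with $p \in \spt\mu \cap B_s(x)$. Disjointness of $\{B_{2 r_p}(p)\}$ forces $B_t(p) \cap \spt\mu = \{p\}$, and hence $\beta^k_{\mu,2}(p, t) = 0$, whenever $t \leq 2 r_p$, so only $t > 2 r_p$ contributes. For such $t$ (with $t$ bounded above so that $B_{10 t}(p) \subset B_5(0)$), the hypothesis $p \in S^k_{\eps, R}$ together with $8t \geq 2R$ gives non-$(k+1, \eps)$-symmetry of $u$ in $B_{8t}(p)$, while Lemma \ref{lem:sym-from-drop} applied to $u_{p, 8t}$ gives $(0, \delta)$-symmetry: its hypothesis is met because the centered drop $W_{\eta r_p}(p) \geq E - \eta$ combined with almost-monotonicity (Theorem \ref{thm:weiss-monotonicity}) pins $W_u(p)$ to an interval of length $O(\eta)$ throughout $u \in [\eta r_p, 2]$, so $W_{8t}(p) - W_{\gamma \cdot 8t}(p) = O(\eta)$ for the $\gamma = \gamma(n,\Lambda,\delta)$ produced by that lemma. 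The endpoint regime $t \sim 1$ (where $B_{10t}(p)$ escapes the good ball) would be handled by the trivial bound $\beta^k_{\mu,2}(p, t)^2 \leq c \mu(B_t(p))/t^k$.

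Granted the pointwise estimate from Theorem \ref{thm:L2-est}, a Fubini swap yields
$$\int_0^{2s} \int_{B_s(x)} \beta^k_{\mu,2}(z,t)^2\, d\mu(z)\, \frac{dt}{t} \leq c \int_{B_{3s}(x)} \int_0^{2s} \frac{\mu(B_t(y) \cap B_s(x))}{t^{k+1}} \big( W_{8t}(y) - W_t(y) + c \eta t^\alpha \big)\, dt\, d\mu(y).$$
For each $y \in \spt\mu$, the substitution $u = 8t$ telescopes:
$$\int_{t_0}^{2s} \big(W_{8t}(y) - W_t(y)\big)\, \frac{dt}{t} = \int_{2s}^{16s} \frac{W_u(y)}{u}\, du - \int_{t_0}^{8 t_0} \frac{W_u(y)}{u}\, du,$$
and the centered-drop hypothesis at $y$ together with almost-monotonicity pins $W_u(y)$ to $[E - c\eta, E + c\eta]$ on $u \in [\eta r_y, 2]$, so this telescoping integral is $O(\eta)$; the Hölder remainder integrates to $O(\eta s^\alpha)$.

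The hard part will be closing the circular dependence: the bound above collapses to something of the shape $c \eta \cdot \sup_{t \leq s,\, y} \frac{\mu(B_t(y))}{t^k} \cdot \mu(B_{3s}(x))$, which needs an a priori mass bound to be useful --- exactly what Discrete-Reifenberg is supposed to output. I would resolve this by strong induction on the scale $s$: assume $\mu(B_\rho(\cdot)) \leq C_0(n) \rho^k$ for all $\rho < s$ with the absolute constant $C_0(n) := C_{DR}(n)$ (the base case $s \lesssim R$ is trivial, since then $B_s$ contains at most one point of $\spt\mu$), plug this into the Fubini estimate to get a $\beta$-integral $\leq c \eta \, C_0^2 \, s^k \leq \delta_{DR}(n)\, s^k$ by taking $\eta_1$ sufficiently small, and then apply Theorem \ref{thm:discrete-reifenberg} to $\mu\llcorner B_s(x)$ to promote the inductive hypothesis up to scale $s$. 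Crucially $\eta_1$ depends only on $(n, \Lambda, \eps, \alpha)$ and not on $C_0$, so the inductive constant never degrades, and at $s = 1$ one reads off $\mu(B_1(0)) \leq c(n)$.
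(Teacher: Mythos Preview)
Your overall strategy matches the paper's exactly: invoke Theorem~\ref{thm:L2-est} (via Lemma~\ref{lem:sym-from-drop}) at every center, telescope the density drops, and close with Theorem~\ref{thm:discrete-reifenberg} through an induction on scale. The telescoping and the observation that $\beta^k_{\mu,2}(p,t)=0$ for $t\leq 2r_p$ are both correct and are precisely what the paper uses.

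There is, however, a genuine gap in your induction. Your inductive hypothesis is that the \emph{full} packing measure $\mu=\sum_p r_p^k\delta_p$ satisfies $\mu(B_\rho(\cdot))\leq C_0\rho^k$ for all $\rho<s$, with base case $\rho\lesssim R$. But this base case is false: a single point $p$ with $r_p$ close to $1$ already gives $\mu(B_\rho(p))\geq r_p^k$ for every $\rho>0$, so $\mu(B_\rho(p))\leq C_0\rho^k$ fails badly for small $\rho$. The same issue prevents you from applying Theorem~\ref{thm:discrete-reifenberg} to $\mu\llcorner B_s(x)$ directly, since after rescaling that theorem requires all radii to be $\leq s$, whereas $\mu\llcorner B_s(x)$ may carry points with $r_p>s$.

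The paper repairs this with a device you are missing: it works not with $\mu$ but with the \emph{truncated} measures $\mu_i=\sum_{r_p\leq 2^{-i}}r_p^k\delta_p$, and the inductive statement is $\mu_i(B_{2^{-i}}(\cdot))\leq C_{DR}\,2^{-ik}$. The base case is now genuinely trivial (for $2^{-i}<R$ the measure $\mu_i$ is zero), and at scale $2^{-i}$ all radii are automatically $\leq 2^{-i}$ so Discrete-Reifenberg applies. The disjointness of $\{B_{2r_p}(p)\}$ guarantees $\beta_{\mu_i}(x,2^{-j})=\beta_{\mu_j}(x,2^{-j})$ for $j\geq i$ (or zero), which is what lets the truncation pass harmlessly through the $\beta$-computation. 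The few ``large'' points with $2^{-i-2}<r_p\leq 2^{-i}$ are handled by a crude volume count (there are at most $c(n)$ of them in any $B_{4\cdot 2^{-i}}$), which is how the paper passes from the inductive bound on $\mu_{i+1}$ to a bound on $\mu_i(B_{4\cdot 2^{-i}})$. Once you introduce this truncation your argument goes through verbatim.
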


\begin{proof}
Choose $\delta(n, \Lambda, \eps, \alpha)$ as in Theorem \ref{thm:L2-est}, and then $\gamma(n, \Lambda, \delta, \alpha)$ as in Lemma \ref{lem:sym-from-drop}.  Ensure
\[
\eta \leq \frac{\min\{\delta, \gamma\}}{2c_0 + 1}.
\]
Recall $c_0$ was the constant from Theorem \ref{thm:weiss-monotonicity}.  For convenience, in this proof we will write $r_i = 2^{-i}$.

For each integer $i \in \mathbb N$, define the packing measure
\begin{equation}\label{defofpackingmeasure}
\mu_i = \sum_{r_p \leq r_i} r_p^k \delta_p,
\end{equation}
and for shorthand write $\beta^k_i = \beta^k_{\mu_i, 2}$ (as defined for a general measure in \eqref{eqn:beta}).  Clearly the required estimate is equivalent to $\mu_0(B_1(0)) \leq c(n)$.

We make a few remarks about the $\beta_i$.  Suppose $x \in \spt \mu_i$, and $j \geq i$.  Then by disjointness
\begin{equation}\label{eqn:betaatdifferentscales}
\beta_i(x, r_j) = \left\{ \begin{array}{l l} \beta_j(x, r_j) & \text{ if $x \in \spt \mu_j$}, \\
0 & \text{ otherwise.} \end{array}\right. 
\end{equation}

On the other hand, since $W_{8r_i}(x) - W_{\eta r_i}(x) \leq (2c_0 + 1)\eta$, we have by Lemma \ref{lem:sym-from-drop}, Theorem \ref{thm:L2-est}, and our choice of $\eta$ that
\begin{equation}\label{betail2estimates}
\beta_i(x, r_i)^2 \leq c(n, \Lambda, \eps) r_i^{-k} \int_{B_{r_i}(x)} W_{8r_i}(y) - W_{ r_i}(y) + c_0 \eta (8r_i)^\alpha \, d\mu_i(y) ,
\end{equation}
whenever $r_i < 2^{-4}$.

For $r_i \leq 2^{-4}$, we shall inductively prove the estimate
\begin{equation}\label{eqn:inductivemassbound} \tag{$\dagger_i$}
 \mu_i(B_{r_i}(x)) \leq C_{DR}(n) r_i^k \quad \forall x \in B_1(0).
\end{equation}
Here $C_{DR}$ is the constant from Theorem \ref{thm:discrete-reifenberg}.  We observe that \eqref{eqn:inductivemassbound} vacuously holds for $i$ so large that $r_i < R$, as in this case $\mu_i \equiv 0$.  Let us suppose the inductive hypothesis that $(\dagger_j)$ holds for all $j \geq i+1$.

Fix an $x \in B_1(0)$.  By a packing argument and our inductive hypothesis we can suppose
\begin{equation}\label{massboundstimes4}
\mu_j(B_{4r_j}(x)) \leq \Gamma(n) r_j^k \quad \forall j \geq i-2, \forall x \in B_1(0),
\end{equation}
where $\Gamma = c(n) C_{DR}$.  We elaborate.  We have $\mu_j(B_{4r_j}(x)) = \mu_{j+2}(B_{4r_j}(x)) + \sum r_p^k$ where we sum over $p\in B_{4r_j}(x)$ having $r_{j+2} < r_p \leq r_{j}$. Since the  $B_{2r_p}(p)$ are disjoint we are summing over at most $c(n)$ points.

We calculate, using Fubini, 
\begin{align*}
&\sum_{r_j \leq 2r_i} \int_{B_{2r_i}(x)} \beta_i(z, r_j)^2 d\mu_i(z) \\
&\stackrel{\eqref{eqn:betaatdifferentscales}}{=} \sum_{r_j \leq 2r_i} \int_{B_{2r_i}(x)} \beta_j(z, r_j)^2 d\mu_j(z) \\
&\stackrel{\eqref{betail2estimates}}{\leq} c \sum_{r_j \leq 2r_i} \frac{1}{r_j^k} \int_{B_{2r_i}(x)} \int_{B_{r_j}(z)} W_{8r_j}(y) - W_{r_j}(y) + c(n,\alpha) \eta r_j^\alpha \, d\mu_j(y) d\mu_j(z) \\
&\leq c \sum_{r_j \leq 2r_i} \int_{B_{2r_i + r_j}(x)} \frac{\mu_j(B_{r_j}(y))}{r_j^k} \left( W_{8r_j}(y) - W_{r_j}(y) + c \eta r_j^\alpha \right) d\mu_j(y) \\
&\stackrel{\eqref{eqn:inductivemassbound}}{\leq} c \Gamma \int_{B_{4r_i}(x)} \sum_{r_j \leq 2r_i} W_{8r_j}(y) - W_{r_j}(y) + c \eta r_j^\alpha \, d\mu_j(y) \\
&\stackrel{\eqref{defofpackingmeasure}}{\leq} c \Gamma \left( \sum_{ p \in B_{4r_i}(x)\cap \spt\mu_i } r_p^k (W_{16r_i}(p) - W_{r_p}(p) + c  \eta ) \right) \\
&\leq c \Gamma \cdot \eta \cdot \mu_i(B_{4r_i}(x))  \\
&\stackrel{\eqref{massboundstimes4}}{\leq} c(n, \Lambda, \eps, \alpha) \Gamma^2 \eta r_i^k .
\end{align*}

Ensuring $\eta(n, \Lambda, \eps, \alpha)$ is sufficiently small, we deduce
\[
\sum_{r_j \leq 2r_i} \int_{B_{2r_i}(x)} \beta_i(z, r_j)^2 d\mu_i(z) \leq \delta_{dr} r_i^k ,
\]
and therefore Discrete-Reifenberg (Theorem \ref{thm:discrete-reifenberg}) implies
\[
\mu_i(B_{r_i}(x)) \leq C_{DR} r_i^k .
\]
This proves $\eqref{eqn:inductivemassbound}$, and therefore by mathematical induction $(\dagger)$ holds for all $r_i \leq 2^{-4}$.  The required bound on $\mu_0$ now follows by a simple packing argument.
\end{proof}


\section{A corona-type decomposition}\label{section:main-const}


In this section we build the cover of Theorem \ref{thm:key-packing}.  The complication is in reconciling condition \ref{itm:uenergydrop} of Theorem \ref{thm:key-packing}, requiring a \emph{definite} density drop on the entire balls, and condition \eqref{eqn:centered-balls} of Lemma \ref{lem:centered-packing}, requiring \emph{small} density drops at the centers.

The crucial observation that makes it work is the dichotomy Theorem \ref{thm:dichotomy}: in any ball either we have small density drop in the entire $S^k_{\eps, \eta R}$, or the high-density points are concentrated near a lower-dimensional $(k-1)$-plane.  In other words, whenever we cannot use Lemma \ref{lem:centered-packing}, we get a small enough $k$-dimensional packing estimate on $S^k_{\eps, \eta R}$ to compensate for naive overlaps.

As suggested in the Introduction, we call balls satisfying the first condition of having small drops \emph{good}, and balls satisfying the second condition of looking $(k-1)$-dimensional \emph{bad} (see Definition \ref{def:balls}).  We shall implement two different stopping-time arguments, one for good balls (Section \ref{section:good-tree}) and one for bad balls (Section \ref{section:bad-tree}).  In each case we build a \emph{tree} of good or bad balls, which is a sequence of coverings at smaller and smaller scales by good balls, bad balls, and balls satisfying the stopping condition \ref{itm:uenergydrop} of Theorem \ref{thm:key-packing}.  We then chain these trees together (Section \ref{section:alt-trees}) to obtain our estimate.  Let us detail some the tree constructions.

\vspace{2.5mm}
A \emph{good tree} is built in the following way.  Start at some initial good ball $B_{r_0}(g_0)$.  By virtue of being good, we have small drop in $S^k_{\eps, \eta R} \cap B_{r_0}(g_0)$ down to a very small scale.  We let the good/bad balls at scale $r_1$ be a Vitali cover of $S^k_{\eps, \eta R} \cap B_{r_0}(g_0)$.  So $S^k_{\eps, \eta R} \cap B_{r_0}(g_0)$ is covered by the good $r_1$-balls, and bad $r_1$-balls.

Now we define the good/bad balls at scale $r_2$ to be a Vitali cover of
\[
S^k_{\eps, \eta R} \cap B_{r_0}(g_0) \cap \text{(good $r_1$-balls)} \setminus \text{(bad $r_1$-balls)}.
\]
So $S^k_{\eps, \eta R}$ is covered by the good $r_2$-balls, and bad balls at scales $r_1$ and $r_2$.  We continue in this fashion, inducting into the good $r_i$-balls, and avoiding bad balls at scales $r_i, r_{i-1}, \ldots, r_1$.  We continue until we hit $r_i = R$, and we end up with a cover of $S^k_{\eps, \eta R} \cap B_{r_0}(g_0)$ with stop balls at scale $R$, and bad balls at scale $r_1, \ldots, r_i, \ldots, R$. 

Each stop/bad ball center has small density drop, by virtue of living inside a bigger good ball.  We can use Lemma \ref{lem:centered-packing} to obtain packing on the resulting cover.  See Theorem \ref{thm:good-tree}.

\vspace{2.5mm}
A \emph{bad tree} is built as follows.  Start at some initial bad ball $B_{r_0}(b_0)$.  By definition, there is some $(k-1)$-plane $\ell_0^{k-1}$, so that the points of high density are clustered near $\ell_0$.

We define the good/bad balls at scale $r_1$ to be a Vitali cover of $S^k_{\eps, \eta R} \cap B_{r_0}(B_0) \cap B_{2r_1}(\ell_0)$, and define the stop balls at scale $r_1$ to be a Vitali cover of $S^k_{\eps, \eta R} \cap (B_{r_0}(b_0) \setminus B_{2r_1}(\ell_0))$.  So $S^k_{\eps, \eta R} \cap B_{r_0}(b_0)$ is covered by the good/bad/stop balls at scale $r_1$, and by construction each stop ball must have uniformly large density drop.

We define the good/bad $r_2$-balls in the same way, covering $(k-1)$-planar neighborhoods in each bad $r_1$-ball (we don't need to avoid previous balls).  The stop $r_2$-balls cover the complements of $(k-1)$-planar neighborhoods in bad $r_1$-balls.  We continue until $r_i = R$, and end up with a cover of $S^k_{\eps, \eta R} \cap B_{r_0}(b_0)$ with stop balls at scale $R$, and good/stop balls at scales $r_1, \ldots, r_i, \ldots, R$, with the property that stop balls at scale $> R$ have large density drop.

At each scale the bad balls cover only a $(k-1)$-dimensional region, and so by choosing our scale-drop sufficiently small we can obtain $k$-packing estimates on \emph{all} bad balls across all scales.  This in turn gives a $k$-packing estimate on all the good/stop balls.  In fact we can make the $k$-packing estimate \emph{very small}.  See Theorem \ref{thm:bad-tree}.

\vspace{2.5mm}

Each good/bad tree satisfies the required decomposition of Theorem \ref{thm:key-packing} away from the bad/good balls.  In a good tree we may refer to bad balls as the \emph{tree leaves}, and similar the good balls are \emph{bad tree leaves}.  Given an initial good/bad tree, rooted at $B_1(0)$, let us build secondary bad/good trees in all the leaves.  In the leaves of each secondary tree, build tertiary trees.  Continuing in this fashion, we obtain a sequence of decompositions with smaller and smaller leaves.  Eventually, all the balls will satisfy the stopping conditions of Theorem \ref{thm:key-packing} \ref{itm:uenergydrop}.

Each time we build a new family of trees, the trees switch type.  This is very important, as each time we build a new tree we incur double-counting errors, because we essentially forget all the other trees exist.  The type-switching means we can kill the double-counting errors with the small bad-tree packing.  See Theorem \ref{thm:all-trees}

\vspace{2.5mm}

For the duration of this section we assume the hypotheses of Theorem \ref{thm:key-packing}.  So, $u$ solves $(\star_{B_5(0), Q})$, with $|Q + 1/Q|_{C^0(B_5(0))} \leq \Lambda$ and $[Q]_{\alpha, B_5(0)} \leq \frac{\eta}{2c_0}$, for $\eta$ to be chosen below.  We fix $E = \sup_{B_2(0)} W_2$, and fix some $R \in (0, 1]$.

First, choose $\rho < 1/10$ so that 
\[
2c_1(n) c_2(n) \rho \leq 1/2 ,
\]
where $c_1$ as in Theorem \ref{thm:good-tree}, and $c_2$ as in Theorem \ref{thm:bad-tree}.  Let
\[
\gamma = \eta' = \eta_1(n, \Lambda, \eps, \alpha)/20
\]
as in Lemma \ref{lem:centered-packing}.  Now take
\[
\eta = \eta_0(n, \Lambda, E + 1, \eps, \eta', \gamma, \rho ,\eps, \alpha)
\]
as in Theorem \ref{thm:dichotomy}.  Throughout this section we adhere to the following convention:
\begin{definition}
Write $r_i = \rho^i$.
\end{definition}


Precisely, our notions of good and bad are as follows.
\begin{definition}\label{def:balls}
Take $x \in B_2(0)$, and $R < r < 2$.  We say the ball $B_r(x)$ is \emph{good} if 
\[
W_{\gamma \rho r} \geq E - \eta' \quad \text{ on } \quad S^k_{\eps, \eta R} \cap B_r(x),
\]
and we say $B_r(x)$ is \emph{bad} if it isn't good.

By Theorem \ref{thm:dichotomy} with $E + \eta/2$ in place of $E$ (which is admissible by monotonicity and our choice of $[Q]_{\alpha, B_5(0)} \leq \eta/2c_0$), in any bad ball $B_r(x)$ we have
\[
\{ W_{2\eta r} \geq E - \eta/2 \} \cap B_r(x) \subset B_{\rho r}(\ell^{k-1})
\]
for some affine $(k-1)$-plane $\ell^{k-1}$.
\end{definition}




\subsection{Good tree construction}\label{section:good-tree}

Suppose $B_{r_A}(a)$ is a good ball at scale $A \geq 0$, with $a \in B_1(0)$.  We define precisely the good tree at $B_{r_A}(a)$.  As explained at the start of the Section, the good tree is a sequence of coverings at finer and finer scales, which will decompose $S^k_\eps \cap B_{r_A}(a)$ into a family of bad balls with packing estimates, and a Vitali collection of balls of radius $\approx R$.

We inductively define, for each scale $i \geq A$, a family of good balls $\{B_{r_i}(g)\}_{g \in \cG_i}$, bad balls $\{B_{r_i}(b)\}_{b \in \cB_i}$, and stop balls $\{B_{r_i}(s)\}_{s \in \cS_i}$.  At scale $i = A$, we let $\cG_A = \{a\}$ (so $B_{r_A}(a)$ is the only good ball), and $\cB_A = \cS_A = \emptyset$ (so no scale-$A$ bad or stop balls).

Suppose we have constructed the good/bad/stop balls down through scale $i-1$.  We let $\{z\}_{z \in J_i}$ be a maximal $2r_i/5$-net in
\[
B_1(0) \cap S^k_{\eps, \eta R} \cap B_{r_A}(a) \cap B_{r_{i-1}}(\cG_{i-1}) \setminus \bigcup_{\ell = A}^{i-1} B_{r_\ell}(\cB_\ell).
\]

If $r_i \leq R$, then we let $\cS_i = J_i$, and $\cG_i = \cB_i = \emptyset$.  In other words, we stop building the tree.  Otherwise, we differentiate the $z$'s into $\cG_i \cup \cB_i := J_i$ (disjoint union) by Definition \ref{def:balls}, and take $\cS_i = \emptyset$.  This completes the good tree construction.

\begin{definition}
The construction defined above is called the \emph{good tree rooted at $B_{r_A}(a)$}, and may be written as $\cT_G \equiv \cT_G(B_{r_A}(a))$.  Given such a good tree $\cT_G$, we define the \emph{tree leaves} $\cF(\cT_G) := \cup_i \cB_i$ to be the collection of all bad ball centers, across all scales.  Similarly we let $\cS(\cT_G) = \cup_i \cS_i$ be the collection of stop ball centers.

In a slight abuse of notation, we let $\{r_f\}_{f \in \cF(\cT_G)}$ and $\{r_s\}_{s \in \cS(\cT_G)}$ be the associated radius functions for the leaves $\cF(\cT_G)$, stop balls $\cS(\cT_G)$ (resp.), so e.g. if $s \in \cS_i \subset \cS(\cT_G)$, then $r_s = r_i$.
\end{definition}

We prove the following Theorem for good trees.
\begin{theorem}\label{thm:good-tree}
Let $\cT_G = \cT_G(B_{r_A}(a))$ be a good tree.  We have
\begin{enumerate}[label=(\Alph*)]
\item\label{itm:goodleafpacking} Tree-leaf packing:
\[
\sum_{f \in \mathcal{F}(\cT_G)} r_f^k \leq c_1(n) r_A^k.
\]

\item\label{itm:goodstoppacking} Stop ball packing:
\[
\sum_{s \in \cS(\cT_G)} r_s^k \leq c(n) r_A^k.
\]

\item\label{itm:goodcoveringcontrol} Covering control:
\[
B_1(0) \cap S^k_{\eps, \eta R} \cap B_{r_A}(a) \subset \bigcup_{s \in \cS(\cT_G)} B_{r_s}(s) \cup \bigcup_{f \in \cF(\cT_G)} B_{r_f}(f).
\]

\item\label{itm:goodstopstructure} Stop ball structure: for any $s \in \cS(\cT_G)$, we have $\rho R \leq r_s \leq R$.
\end{enumerate}
\end{theorem}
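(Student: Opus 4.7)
I split along the four claims. Parts (D) and (C) follow directly from the construction. For (D), stop balls are introduced only at the unique terminal scale $i^*$ with $r_{i^*} \leq R < r_{i^*-1}$, so $r_s = r_{i^*} \in (\rho R, R]$. For (C), I would induct on the scale: at each level $i \geq A$, every point of $S^k_{\eps, \eta R} \cap B_{r_A}(a)$ not already covered by $\bigcup_{\ell = A}^{i} B_{r_\ell}(\cB_\ell)$ must lie in $B_{r_i}(\cG_i)$. The base case is $\cG_A = \{a\}$, $\cB_A = \emptyset$; the inductive step uses the maximality of the $2r_i/5$-net $J_i = \cG_i \cup \cB_i$ inside $B_{r_{i-1}}(\cG_{i-1})$, and the induction terminates at $i^*$ where $J_{i^*} = \cS_{i^*}$ closes the cover.

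Parts (A) and (B) form the substance. I would prove them simultaneously by applying Lemma \ref{lem:centered-packing} to $\cF(\cT_G) \cup \cS(\cT_G)$ after rescaling to $r_A = 1$, $a = 0$. Two structural facts are needed. \emph{Centered density drop.} Any $f \in \cF \cup \cS$ at level $i$ lies in $S^k_{\eps, \eta R} \cap B_{r_{i-1}}(g)$ for some parent $g \in \cG_{i-1}$ (or $g = a$ when $i = A+1$); since $B_{r_{i-1}}(g)$ is good,
\[
W_{\gamma r_f}(f) \;=\; W_{\gamma \rho r_{i-1}}(f) \;\geq\; E - \eta'.
\]
\emph{Disjointness of $\{B_{r_f/5}(f)\}$.} Same-scale disjointness is the $2r_i/5$-net property. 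Across scales, for $f \in J_j$, $f' \in \cB_i$ with $i < j$, the definition of $J_j$ excludes $B_{r_i}(f')$, so $d(f, f') \geq r_i$; combined with $r_f/5 + r_{f'}/5 \leq 2r_i/5 < r_i$ this yields disjointness. Since stop balls live at only one scale, their cross-scale interactions reduce to the bad-ball case just handled.

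With these facts in hand, I take $r_p = r_f/10$ so that $\{B_{2r_p}(f)\}$ is disjoint, and apply Lemma \ref{lem:centered-packing} with its parameter $\eta_{\mathrm{lem}} = \eta_1$ and stratum cutoff $R_{\mathrm{lem}} = \eta R$. The inequality $\eta_1 r_p = (\eta_1/10) r_f \geq (\eta_1/20) r_f = \gamma r_f$, combined with almost-monotonicity (Theorem \ref{thm:weiss-monotonicity}) and $[Q]_\alpha \leq \eta/(2c_0)$, upgrades the centered density bound to $W_{\eta_1 r_p}(f) \geq E - \eta' - \eta/2 \geq E - \eta_1$. Stratum membership $f \in S^k_{\eps, R_{\mathrm{lem}}}$ is automatic from $\eta R \leq R_{\mathrm{lem}}$, and $r_p \geq \rho R/10 \geq \eta R$ provided $\eta \leq \rho/10$. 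The lemma then yields $\sum_f r_p^k \leq c(n)$, i.e.\ $\sum_f r_f^k \leq 10^k c(n)$ in the rescaled picture, which becomes $c_1(n) r_A^k$ after undoing the rescaling, proving both (A) and (B). The main obstacle I anticipate is the parameter bookkeeping --- verifying the cascade of small-constant inequalities $\gamma = \eta' = \eta_1/20$, $\eta \leq \min\{\eta_0, \rho/10\}$, $[Q]_\alpha \leq \eta/(2c_0)$ consistently so that Lemma \ref{lem:centered-packing} applies with the correct form of density drop; once that is in place, the rest of the argument is structural.
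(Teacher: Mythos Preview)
Your proposal is correct and follows essentially the same approach as the paper: both establish disjointness of the $r_i/5$-balls from the net construction, verify the centered density bound $W_{\gamma r_i}\geq E-\eta'$ via parentage in a good ball, and then invoke Lemma~\ref{lem:centered-packing} at scale $B_{r_A}(a)$ for (A)--(B), with (C) and (D) handled by induction and direct inspection of the construction. Your parameter bookkeeping (the passage from $W_{\gamma r_f}$ to $W_{\eta_1 r_p}$ via almost-monotonicity, the choice $r_p=r_f/10$, and the constraint $\eta\leq\rho/10$) is in fact more explicit than the paper's, which simply asserts that Lemma~\ref{lem:centered-packing} applies.
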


\begin{proof}
Let us point out the two key properties of bad and stop balls.  First, direct from construction, the collections of stop/bad $r_i/5$-balls
\begin{align*}
\{ B_{r_i/5}(b) : b \in \cup_{i=A}^\infty \cB_i \} \cup \{ B_{r_i/5}(s) : s \in \cup_{i=A}^\infty \cS_i \}
\end{align*}
are all pairwise disjoint, and centered in $S^k_{\eps, \eta R}$.

Second, since each stop/bad ball is also centered in a good ball at a previous scale, we have small density drop in the centers of every stop bad ball, i.e. for each $i$:
\[
W_{\gamma r_i}(b) \equiv W_{\gamma \rho r_{i-1}}(b) \geq E - \eta' \quad \forall b \in\cB_i, \quad\text{and}\quad W_{\gamma r_i}(s) \geq E - \eta' \quad \forall s \in\cS_i.
\]

Since by monotonicity we have $\sup_{B_{r_A}(a)} W_{2r_A} \leq E + \eta'$, we can use Lemma \ref{lem:centered-packing} at scale $B_{r_A}(a)$ to prove packing estimates \ref{itm:goodleafpacking}, \ref{itm:goodstoppacking}.

Conclusion \ref{itm:goodcoveringcontrol} is an elementary induction argument: for each $i \geq A$, we claim that
\begin{gather}\label{eqn:inductive-bad}
B_1(0) \cap S^k_{\eps, \eta R} \cap B_{r_A}(a) \subset B_{r_i}(\cG_i) \cup \bigcup_{\ell=0}^i B_{r_\ell}(\cB_\ell \cup \cS_\ell).
\end{gather}
When $i = A$ \eqref{eqn:inductive-bad} trivially is true.  Suppose, by inductive hypothesis, that \eqref{eqn:inductive-bad} holds at $i-1$.  Then by construction we have
\[
B_1(0) \cap S^k_{\eps, \eta R} \cap B_{r_{i-1}}(\cG_{i-1}) \setminus \cup_{\ell=A}^{i-1} B_{r_\ell}(\cB_\ell) \subset B_{r_i}(\cB_i \cup \cG_i \cup \cS_i).
\]
This proves \eqref{eqn:inductive-bad} at stage $i$.  When $r_i \leq R$ there are no good balls, and therefore \eqref{eqn:inductive-bad} implies conclusion \ref{itm:goodcoveringcontrol}.

Conclusion \ref{itm:goodstopstructure} follows because the only $i$ for which $\cS_i \neq\emptyset$ is when $r_i \leq R$, in which case necessarily $r_{i-1} > R$.
\end{proof}


\subsection{Bad tree construction}\label{section:bad-tree}

Suppose $B_{r_A}(a)$ is a bad ball at scale $A \geq 0$, with $a \in B_1(0)$.  In the following we construct the \emph{bad tree at $B_{r_A}(a)$}, which decomposes $S^k_{\eps, \eta R} \cap B_{r_A}(a)$ into a collection of good balls and balls with definite energy drop, each with packing estimates, and a Vitali collection of balls of radius $\approx R$.

As before we inductively define, for each scale $i \geq A$, a family of good balls $\{B_{r_i}(g)\}_{g \in \cG_i}$, bad balls $\{B_{r_i}(b)\}_{b \in \cB_i}$, and stop balls $\{B_{\eta r_{i-1}}(s)\}_{s \in \cS_i}$.  At scale $i = 0$, we let $\cB_A = \{a\}$, and $\cG_A = \cS_A = \emptyset$.  However, let us emphasize that these good/bad/stop balls are \emph{distinct} from the tree construction in section \ref{section:good-tree}.  Moreover, notice we define these stop balls to have (the smaller) radius $\eta r_{i-1}$ instead of $\rho r_{i-1} \equiv r_i$.  This is to ensure a uniform density drop on big stop balls.

For each bad ball $b \in \cB_i$ we have a $(k-1)$-affine plane $\ell_b^{k-1}$, associated to Theorem \ref{thm:dichotomy}\ref{itm:closetoplane}.

Suppose we have constructed the good/bad/stop balls down through scale $i-1$.  If $r_i \leq R$, then take $\cG_i = \cB_i = \emptyset$, and $\cS_i$ to be a maximal $2\eta r_{i-1}/5$-net in
\[
B_1(0) \cap S^k_{\eps, \eta R} \cap B_{r_A}(a) \cap B_{r_{i-1}}(\cB_{i-1}).
\]
So, we're stopping the tree.  Remember $\eta << \rho$, so $\eta r_{i-1} < r_i \leq R$.

Otherwise, if $r_i > R$, we define $\cS_i$ to be a maximal $2\eta r_{i-1}/5$-net in
\[
B_1(0) \cap S^k_{\eps, \eta R} \cap B_{r_A}(a) \cap \bigcup_{b \in \cB_{i-1}} \left( B_{r_{i-1}}(b) \setminus B_{2\rho r_{i-1}}(\ell_b) \right),
\]
and we let $\{g\}_{g \in \cG_i} \cup \{b\}_{b \in \cB_i}$ be a maximal $2r_i/5$-net in
\[
B_1(0) \cap S^k_{\eps, \eta R} \cap B_{r_A}(a) \cap \bigcup_{b \in \cB_{i-1}} \left( B_{r_{i-1}}(b) \cap B_{2\rho r_{i-1}}(\ell_b) \right) .
\]
This completes the bad tree construction.

\begin{definition}
The construction defined above is called the \emph{bad tree rooted at $B_{r_A}(a)$}, and may be written as $\cT_B \equiv \cT_B(B_{r_A}(a))$.  Given such a bad tree, we define the tree leaves to be $\cF(\cT_B):= \cup_i \cG_i$, the collection of all good ball centers, and set $\cS(\cT_B) = \cup_i \cS_i$ be the collection of stop ball centers.

As before, we write $r_f$, $r_s$ for the associated radius function.  So, e.g., if $s \in \cS_i \subset \cS(\cT_B)$, then $r_s = \eta r_{i-1}$.
\end{definition}

We prove the following Theorem for bad trees:
\begin{theorem}\label{thm:bad-tree}
Let $\cT_B = \cT_B(B_{r_A}(a))$ be a bad tree.  Then we have:
\begin{enumerate}[label=(\Alph*)]
\item\label{itm:badleafpacking} tree-leaf packing, with \emph{small} constant:
\[
\sum_{f \in \mathcal{F}(\cT_B)} r_f^k \leq 2c_2(n) \rho r_A^k
\]

\item\label{itm:badstopballpacking} Stop ball packing:
\[
\sum_{s \in \cS(\cT_B)} r_s^k \leq c(n, \eta) r_A^k.
\]

\item\label{itm:badcoveringcontrol} Covering control:
\[
B_1(0) \cap S^k_{\eps, \eta R} \cap B_{r_A}(a) \subset \bigcup_{s \in \cS(\cT_B)} B_{r_s}(s) \cup \bigcup_{f \in \cF(\cT_B)} B_{r_f}(f).
\]

\item\label{itm:badstopballstructure} Stop ball structure: for any $s \in \cS(\cT_B)$, then we have
\[
\eta R \leq r_s \leq R, \quad \text{and/or} \quad \sup_{B_{2r_s}(s)} W_{2r_s} \leq E - \eta/2.
\]
\end{enumerate}

\end{theorem}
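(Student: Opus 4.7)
The plan is to prove conclusion (A) first, as it is the heart of the argument, and then derive (B), (C), and (D) as relatively direct consequences. The key geometric observation is the following: at any scale $i > A$, the new good/bad balls at radius $r_i = \rho r_{i-1}$ are drawn from a $2r_i/5$-separated net inside
\[
\bigcup_{b \in \cB_{i-1}} \bigl( B_{r_{i-1}}(b) \cap B_{2\rho r_{i-1}}(\ell_b) \bigr),
\]
which for each $b \in \cB_{i-1}$ is a slab of transverse thickness $2r_i$ about a $(k-1)$-plane of diameter $\lesssim r_{i-1}$. A standard volume-packing argument therefore bounds
\[
\#\bigl( (\cG_i \cup \cB_i) \cap B_{r_{i-1}}(b) \bigr) \leq c(n) \rho^{-(k-1)}.
\]

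For (A), I would induct on $i \geq A$ to prove the scale-by-scale estimate $\sum_{b \in \cB_i} r_i^k \leq (c(n)\rho)^{i-A} r_A^k$, the base case $i = A$ being immediate since $\cB_A = \{a\}$. For the inductive step, multiplying the slab packing bound by $r_i^k = \rho^k r_{i-1}^k$ and summing over $b \in \cB_{i-1}$ gives
\[
\sum_{g \in \cG_i \cup \cB_i} r_i^k \leq c(n) \rho \sum_{b \in \cB_{i-1}} r_{i-1}^k,
\]
and restricting the left side to $\cB_i$ closes the induction. Summing the corresponding bound $\sum_{g \in \cG_i} r_i^k \leq c(n) \rho (c(n)\rho)^{i-1-A} r_A^k$ over $i > A$, and invoking the setup choice $2 c_1 c_2 \rho \leq 1/2$, which in particular forces $c(n)\rho \leq 1/2$ for $c_2(n) := c(n)$, yields a geometric series bounded by $2c_2(n) \rho r_A^k$, which is conclusion (A).

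Conclusion (B) follows from the same slab-packing idea at the stop-ball scale: in each $B_{r_{i-1}}(b)$ the set $\cS_i$ is a $2\eta r_{i-1}/5$-separated net in a region of volume at most $c(n) r_{i-1}^n$, so $\#(\cS_i \cap B_{r_{i-1}}(b)) \leq c(n)\eta^{-n}$ and hence
\[
\sum_{s \in \cS_i} r_s^k \leq c(n) \eta^{k-n} \sum_{b \in \cB_{i-1}} r_{i-1}^k,
\]
after which the induction bound from (A) collapses the sum in $i$ to $c(n,\eta) r_A^k$. Conclusion (C) is a direct induction on scale: the two net constructions at scale $i$ together cover every point of $S^k_{\eps, \eta R} \cap B_{r_{i-1}}(b)$ either by $B_{r_i}(\cG_i \cup \cB_i)$ (on the slab $B_{2\rho r_{i-1}}(\ell_b)$) or by $B_{\eta r_{i-1}}(\cS_i)$ (on its complement inside $B_{r_{i-1}}(b)$), and iterating until the terminal scale $r_i \leq R$ gives the claimed cover by $\cF(\cT_B) \cup \cS(\cT_B)$.

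Finally, (D) splits into two cases. Terminal stop balls arise at scales $i$ with $r_i \leq R < r_{i-1}$, so $r_s = \eta r_{i-1} \in (\eta R, \eta R/\rho]$, which sits in $[\eta R, R]$ since $\eta \leq \rho$. For a non-terminal stop ball $s \in \cS_i$ with $r_i > R$, by construction $s \in B_{r_{i-1}}(b) \setminus B_{2\rho r_{i-1}}(\ell_b)$ for some $b \in \cB_{i-1}$, and the dichotomy conclusion for the enclosing bad ball (Definition \ref{def:balls}) gives $W_{2\eta r_{i-1}}(y) < E - \eta/2$ at every point $y \in B_{r_{i-1}}(b)$ with $d(y, \ell_b) \geq \rho r_{i-1}$; since $\eta \ll \rho$, every $y \in B_{2r_s}(s) = B_{2\eta r_{i-1}}(s)$ satisfies the distance bound. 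The main subtlety to navigate is that $B_{2r_s}(s)$ may slightly escape $B_{r_{i-1}}(b)$ when $s$ lies near its boundary; I would handle this by invoking the scale-invariance of Theorem \ref{thm:dichotomy} to apply the dichotomy at the marginally larger radius $(1+2\eta) r_{i-1}$ about $b$, shrinking $\eta$ in terms of $\rho$ if necessary so that the same $(k-1)$-plane controls all of $B_{2r_s}(s)$. This parameter balancing, absorbed into the overall choice of $\eta$ at the start of the section, is the one step requiring care.
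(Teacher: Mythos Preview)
Your proposal is correct and follows the paper's proof essentially verbatim: the same slab-packing count $\#\{(\cG_i\cup\cB_i)\cap B_{r_{i-1}}(b)\}\leq c_2(n)\rho^{1-k}$ and geometric summation for (A), the same $\eta^{-n}$ Vitali bound for (B), the same covering induction for (C), and the same dichotomy application for (D). The boundary subtlety you flag in (D)---that $B_{2r_s}(s)$ may protrude slightly from $B_{r_{i-1}}(b)$---is not explicitly addressed in the paper, which simply writes the chain $\sup_{B_{2r_s}(s)} W_{2r_s} \leq \sup_{B_{\rho r_{i-1}}(s)} W_{2\eta r_{i-1}} \leq E - \eta/2$; your proposed fix is sound and, if anything, more careful than the original.
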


\begin{proof}
Take $r_i > R$.  The good/bad ball centers $\cG_i \cup \cB_i$ lie in $B_{2\rho r_{i-1}}(\ell^{k-1})$, and the $r_i/5$-balls are disjoint.  Therefore, given any bad-ball $B_{r_{i-1}}(b)$, we have
\[
\#\{(\cG_i \cup \cB_i ) \cap B_{r_{i-1}}(b)\} \leq \frac{\omega_{k-1} \omega_{n-k+1} (3\rho)^{n-k+1}}{\omega_n (\rho/5)^n} \leq c_2(n) \rho^{1-k} .
\]
We deduce
\[
\#\{\cG_i \cup \cB_i\} r_i^k \leq c_2 \rho \#\{\cB_{i-1}\} r_{i-1}^k \leq c_2\rho \#\{\cB_{i-1}\cup \cG_{i-1}\}r_{i-1}^k \leq \ldots \leq (c_2 \rho)^{i-A} r_A^k,
\]
and therefore
\[
\sum_{i=A+1}^\infty \#\{\cG_i \cup \cB_i\}r_i^k \leq \sum_{i=A+1}^\infty (c_2 \rho)^{i-A}r_A^k \leq 2c_2\rho r_A^k,
\]
by our choice of $c_2\rho < 1/2$.

Since every good leaf is of scale $\leq r_{A+1}$ and $> R$, this proves the packing estimate \ref{itm:badleafpacking}.  It will also imply estimate \ref{itm:badstopballpacking} as follows.

Given $i \geq A+1$, the stop balls $\{B_{\eta r_{i-1}}(s)\}_{ s \in \cS_i}$ form a Vitali collection centered in $B_{r_{i-1}}(\cB_{i-1})$.  This implies that
\[
\#\{\cS_i\} \leq \frac{10^n}{\eta^n} \#\{\cB_{i-1}\}.
\]
Of course there aren't any stop balls at scale $r_A$.  We deduce
\[
\sum_{i=A+1}^\infty \#\{\cS_i\} (\eta r_{i-1})^k \leq 10^n \eta^{k-n} \sum_{i=A}^\infty \#\{\cB_i\} r_i^k \leq c(n, \eta) r_A^k.
\]
This proves estimate \ref{itm:badstopballpacking}.

Conclusion \ref{itm:badcoveringcontrol} follows precisely as in Theorem \ref{thm:good-tree}.  We prove conclusion \ref{itm:badstopballstructure}.  Take a stop ball center $s \in \cS_i$.  First suppose $r_i > R$.  Then necessarily $s \in B_{r_{i-1}}(b) \setminus B_{2\rho r_{i-1}}(b)$ for some bad ball $b \in \cB_{i-1}$.  By Theorem \ref{thm:dichotomy} and Definition \ref{def:balls}, and our choice $\eta < \rho/2$, we have
\[
\sup_{B_{2r_s}(s)} W_{2r_s} \leq \sup_{B_{\rho r_{i-1}}(s)} W_{2\eta r_{i-1}} \leq E - \eta/2.
\]
Conversely, the only way $r_i \leq R$ could occur is if $r_{i-1} \geq R$.  In this case we have
\[
R \geq \rho r_{i-1} \geq \eta r_{i-1} = r_s \geq \eta R.
\]
\end{proof}


\subsection{Alternating the trees}\label{section:alt-trees}

Our aim is to build a covering as in Theorem \ref{thm:key-packing}.  In any given tree, the stop balls and leaves cover $S^k_{\eps, \eta R} \cap B_1(0)$, and the stop balls satisfy Theorem \ref{thm:key-packing}\ref{itm:upacking} and \ref{itm:uenergydrop}, but the leaves do not.  We therefore implement the following strategy: build first a tree from $B_1(0)$ (let's say it's a good tree); at any bad leaf of our good tree, build a bad tree; in any good leaf of this collection of bad trees, build a good tree; etc.

Each time we build a new tree we switch type, and we can therefore use the small packing of bad trees to cancel overlap errors incurred by tree switching.  Moreover, each time we alternate tree-type our balls shrink by at least $\rho$, so the process must terminate at a collection of stop balls, which cover $S^k_{\eps, \eta R} \cap B_1(0)$ and satisfy the properties of Theorem \ref{thm:key-packing}.  Using Theorems \ref{thm:good-tree} and \ref{thm:bad-tree}, it will then suffice to show we have packing on all the tree leaves.

Let us write this rigorously.  Recall we have fixed $\rho(n) \leq 1/10$ so that
\[
2c_1(n) c_2(n) \rho \leq 1/2,
\]
where $c_1$ is as in Theorem \ref{thm:good-tree}, $c_2$ is as in Theorem \ref{thm:bad-tree}.

We inductively define for each $i = 0, 1, 2, \ldots$ a family of tree leaves $\{B_{r_f}(f)\}_{f \in \cF_i}$, and stop balls $\{B_{r_s}(s)\}_{s \in \cS_i}$.  Here $r_f$, $r_s$ are radius functions, which may (and do) vary with $f$, $s$; we caution the reader that we only have the \emph{upper bound} $r_f \leq r_i$ at each $i$.  For each $i$, the leaves $\cF_i$ will be either all good balls, or all bad balls.

We let $\cF_0 = \{0\}$, and define the associated radius function $r_{f = 0} = 1$, so the ball $B_1(0)$ is our only leaf at stage $0$.  We let $\cS_0 = \emptyset$, so there are no stop balls at stage $0$.  Trivially, the leaves $\cF_0$ are either all good or all bad.

Suppose we have defined the leaves and stop balls up to stage $i-1$.  The leaves in $\cF_{i-1}$ are (by inductive hypothesis) either all good or all bad balls.  If they good, let us define for each $f \in \cF_{i-1}$ a good tree $\cT_{G, f} = \cT_G(B_{r_f}(f))$, with parameters $\rho$ and $\eta$ as fixed above.  Then we set
\[
\cF_i = \bigcup_{f \in \cF_{i-1}} \cF(\cT_{G, f}),
\]
and
\[
\cS_i = \cS_{i-1} \cup \bigcup_{f \in \cS_{i-1}} \cF(\cT_{G, f}).
\]
Since leaves of good trees are always bad balls, all the leaves $\cF_i$ are bad.

On the other hand, if all the leaves $\cF_{i-1}$ are bad, for each $f \in \cF_{i-1}$ define the bad tree $\cT_{B, f} = \cT(B_{r_f}(f))$, and correspondingly set
\[
\cF_i = \bigcup_{f \in \cF_{i-1}} \cF(\cT_{B, f}), \quad \text{and} \quad \cS_i = \cS_{i-1} \cup \bigcup_{f \in \cF_{i-1}} \cS(\cT_{B, f}).
\]
Clearly all the leaves $\cF_i$ now are good.

This completes the construction.  By concatenating the trees, we obtain the following estimates.
\begin{theorem}\label{thm:all-trees}
There is some integer $N$ so that $\cF_N = \emptyset$, and we have:
\begin{enumerate}[label=(\Alph*)]
\item\label{itm:allleafpacking} packing of \emph{all} leaves:
\[
\sum_{i=0}^{N - 1} \sum_{f \in \cF_i} r_f^k \leq c(n).
\]

\item\label{itm:allstoppacking} Packing of stop balls:
\[
\sum_{s \in \cS_N} r_s^k \leq c(\Lambda, \eps, n, \alpha).
\]

\item\label{itm:allcoveringcontrol} Covering control:
\[
S^k_{\eps, \eta R} \cap B_1(0) \subset \bigcup_{s \in \cS_N} B_{r_s}(s).
\]

\item\label{itm:stopballproperties} Stop ball structure: for any $s \in \cS_N$, we have
\[
\eta R \leq r_s \leq R, \quad \text{and/or} \quad \sup_{B_{2r_s}(s)} W_{2r_s} \leq E - \eta/2.
\]
\end{enumerate}

\end{theorem}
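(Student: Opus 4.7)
The plan is to iterate the good and bad tree constructions and exploit the asymmetry that bad-tree leaf packing carries the small constant $2c_2(n)\rho$, while good-tree leaf packing carries the merely bounded constant $c_1(n)$. The design choice $2c_1 c_2 \rho \le 1/2$, fixed at the start of the section, is precisely what ensures that two consecutive stages contract leaf packing by at least a factor of $1/2$.

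For the leaf packing estimate \ref{itm:allleafpacking}, set $a_i = \sum_{f \in \cF_i} r_f^k$. Since the type of $\cF_i$ alternates, applying Theorem \ref{thm:good-tree}\ref{itm:goodleafpacking} or Theorem \ref{thm:bad-tree}\ref{itm:badleafpacking} at each root in $\cF_{i-1}$ gives either $a_i \le c_1 a_{i-1}$ (good stage) or $a_i \le 2c_2\rho\, a_{i-1}$ (bad stage), and composing the two yields
$$a_{i+2} \;\le\; c_1 \cdot 2c_2\rho \cdot a_i \;\le\; \tfrac{1}{2} a_i.$$
Since $a_0 = 1$, summing the geometric series gives $\sum_{i\ge 0} a_i \le c(n)$, which is \ref{itm:allleafpacking}. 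For termination, any leaf $f \in \cF_i$ is produced as either a bad ball of a good tree or a good ball of a bad tree rooted at a leaf $f' \in \cF_{i-1}$, and in both constructions such balls first appear at scale $\rho r_{f'}$; iterating, $r_f \le \rho^i$. As every leaf must have $r_f > R$, once $\rho^N \le R$ we are forced to have $\cF_N = \emptyset$.

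The stop ball bound \ref{itm:allstoppacking} then follows by writing $\cS_N = \bigsqcup_{i \ge 1} \bigsqcup_{f \in \cF_{i-1}} \cS(\cT_f)$ (where $\cT_f$ is the good or bad tree built at $f$) and combining Theorems \ref{thm:good-tree}\ref{itm:goodstoppacking} and \ref{thm:bad-tree}\ref{itm:badstopballpacking} with the leaf estimate:
$$\sum_{s \in \cS_N} r_s^k \;\le\; c(n,\eta) \sum_{i \ge 0} a_i \;\le\; c(n,\Lambda,\eps,\alpha).$$
Covering control \ref{itm:allcoveringcontrol} is a straightforward induction: using Theorems \ref{thm:good-tree}\ref{itm:goodcoveringcontrol} and \ref{thm:bad-tree}\ref{itm:badcoveringcontrol} inside each $B_{r_f}(f)$, the inclusion
$$S^k_{\eps, \eta R} \cap B_1(0) \;\subset\; \bigcup_{s \in \cS_i} B_{r_s}(s) \;\cup\; \bigcup_{f \in \cF_i} B_{r_f}(f)$$
is preserved at each stage, and at $i = N$ the last union is empty. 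The stop ball structure \ref{itm:stopballproperties} is immediate from Theorems \ref{thm:good-tree}\ref{itm:goodstopstructure} and \ref{thm:bad-tree}\ref{itm:badstopballstructure}, dispatched according to whichever tree produced the given stop ball.

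The main obstacle, and indeed the raison d'\^etre of the alternating construction, is controlling the double counting that each tree-switch incurs. Naively iterating only good trees would multiply leaf packing by $c_1(n)$ at each of roughly $\log(1/R)$ stages, producing an $R$-dependent and therefore useless bound. The alternation is what lets the small bad-tree leaf packing constant $2c_2\rho$ — arising because bad balls by definition concentrate their high-density points in a $(k-1)$-dimensional neighborhood of the plane $\ell^{k-1}$ from the Key Dichotomy — absorb the $c_1$ growth from good trees. This is precisely why $\rho$ must be chosen \emph{after} $c_1$ and $c_2$ are fixed, and taken so small that $2c_1 c_2 \rho \le 1/2$.
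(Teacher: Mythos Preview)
Your proposal is correct and follows essentially the same approach as the paper's own proof: the alternation of tree types, the two-step contraction $a_{i+2} \le 2c_1c_2\rho\, a_i \le \tfrac12 a_i$ for leaf packing, the termination via $r_f \le \rho^i$, and the derivation of \ref{itm:allstoppacking}--\ref{itm:stopballproperties} from the corresponding parts of Theorems \ref{thm:good-tree} and \ref{thm:bad-tree} all match the paper line for line. Your closing paragraph on why the alternation is necessary is a helpful gloss but not a departure from the argument.
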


This Theorem directly gives the cover the Theorem \ref{thm:key-packing}.
\begin{proof}[Proof of Theorem \ref{thm:key-packing} given Theorem \ref{thm:all-trees}]
Let $\cU = \cS_N$, and given $x = s \in \cU$, define the radius function
\[
r_x = \max\{ R, r_s \}.
\]
That $\{B_{r_x}(x)\}_{x\in \cS_N}$ is the cover promised by Theorem \ref{thm:key-packing} follows immediately from Theorem \ref{thm:all-trees}\ref{itm:allstoppacking}, \ref{itm:allcoveringcontrol} and \ref{itm:stopballproperties}.
\end{proof}

\begin{proof}[Proof of Theorem \ref{thm:all-trees}]
Let us first show $\cF_N = \emptyset$ for some $N$.  From our tree constructions, in any given (good or bad) tree $\cT(B_r(x))$, \emph{every} leaf $f \in \cF(\cT)$ necessarily satisfies $r_f \leq \rho r$.  We deduce
\[
\max_{f \in \cF_i} r_f \leq \rho \max_{f \in \cF_{i-1}} r_f \leq \rho^i.
\]
For $i$ sufficiently large, we would have $\rho^i < R$, contradicting our definition of a good or bad ball.

Let us prove part \ref{itm:allleafpacking}:  suppose the $\cF_i$ are good.  The collection of $\{f \in \cF_i\}$ are precisely the leaves of bad trees rooted at $\{f' \in \cF_{i-1}\}$.  Therefore, using Theorem \ref{thm:bad-tree}, we have
\[
\sum_{f \in \cF_i} r_f^k \leq 2c_2 \rho \sum_{f \in \cF_{i-1}} r_f^k.
\]
Conversely, if the $\cF_i$ are bad, then the $\cF_i$ are all leaves of good trees rooted at $\cF_{i-1}$.  So by Theorem \ref{thm:good-tree}, 
\[
\sum_{f \in \cF_i} r_f^k \leq c_1 \sum_{f \in \cF_{i-1}} r_f^k.
\]

If $\cF_i$ are good, then it is clear that the $\cF_{i-1}$ are bad, and vice versa.  We deduce that
\[
\sum_{f \in \cF_i} r_f^k \leq c(n) (2c_1 c_2\rho)^{i/2} \leq c(n) 2^{-i/2}.
\]
The packing estimate \ref{itm:allleafpacking} follows directly.

We prove \ref{itm:allstoppacking}.  Each stop ball $s \in \cS_N$ arises from a good or bad tree rooted in some $f \in \cF_i$, for some $i < N$.  Using Theorems \ref{thm:good-tree} and \ref{thm:bad-tree}, we therefore have
\[
\sum_{s \in \cS_N} r_s^k \leq c(\Lambda, \eps, n) \sum_{i=0}^N \sum_{f \in \cF_i} r_f^k \leq c(\Lambda, \eps, n).
\]

We show \ref{itm:allcoveringcontrol}.  Apply Theorems \ref{thm:good-tree} and \ref{thm:bad-tree} to each tree constructed at $f \in \cF_{i-1}$, to deduce
\begin{align*}
\bigcup_{f \in \cF_{i-1}} (S^k_{\eps, \eta R} \cap B_{r_f}(f)) \subset \bigcup_{f \in \cF_i} (S^k_{\eps, \eta R} \cap B_{r_f}(f)) \cup \bigcup_{s \in \cS_i} B_{r_s}(s).
\end{align*}
Since, vacuously, $S^k_{\eps, \eta R} \cap B_1(0) \subset B_1(0)$, it follows by induction that
\[
S^k_{\eps, \eta R} \cap B_1(0) \subset \bigcup_{f \in \cF_i} B_{r_f}(f) \cup \bigcup_{s \in \cS_i} B_{r_s}(s)
\]
for any $i$.  Setting $i = N$ we obtain \ref{itm:allcoveringcontrol}.  \ref{itm:stopballproperties} is immediate from Theorems \ref{thm:good-tree}, \ref{thm:bad-tree}.
\end{proof}


\section{Rectifiability and other Corollaries}

Theorem \ref{thm:main-mass} shows that the Hausdorff measures $\haus^k \llcorner S^k_\eps$ are upper-Ahlfors-regular away from the boundary $\partial \Omega$.  Demonstrating rectifiability is then an easy calculation in the spirit of Lemma \ref{lem:centered-packing}.

\begin{proof}[Proof of Theorem \ref{thm:rectifiable}]
Let $U_\nu = S^k_\eps \cap \{ W_0 \geq \nu \}$.  We will show each $U_\nu$ is rectifiable.  Since rectifiability is a local property, we can assume we are working in a small ball $B_{2r}(x)$, with $x \in U_\nu \cap B_1(0)$, satsifying
\[
|Q + 1/Q|_{C^0(B_{10r}(x))} \leq \Lambda, \quad [Q]_{\alpha, B_{10r}(x)} \leq \eta, \quad \text{and} \quad \sup_{B_{4r}(x)} W_{2r} - \nu \leq \eta,
\]
for some $\Lambda < \infty$, and $\eta$ to be chosen.

The second condition is simply how $Q$ scales with $u$.  The third follows by upper-semi-continuity: otherwise, we would have a sequence $y_i \to x \in U_\nu$, and $r_i \to 0$, with $W_{r_i}(y_i) \geq \nu + \eta$.  But this would imply
\[
W_0(y) \geq \limsup_i W_{r_i}(y_i) \geq \nu + \eta \geq W_0(y) + \eta,
\]
a contradiction.

Take $\delta(n,\Lambda, \eps,\alpha)$ as in Theorem \ref{thm:L2-est}, and $\gamma(n,\Lambda, \delta,\alpha)$ as in Lemma \ref{lem:sym-from-drop}.  We now ensure $\eta \leq (2c_0 + 1)^{-1} \min\{\delta, \gamma\}$.  Write $\mu_\nu = \haus^k \llcorner U_\nu$, and for convenience set $r_i = 2^{-i}$.  Recall that Theorem \ref{thm:main-mass} implies
\[
\mu_\nu(B_s(y)) \leq c(n, \Lambda, \eps, \alpha) s^k \quad \forall s \leq 2r, y \in B_{2r}(x).
\]

We calculate using Theorem \ref{thm:L2-est} and Fubini:
\begin{align*}
&\sum_{r_j \leq r} \int_{B_r(x)} \beta_{\mu_\nu, 2}^k(z, r_j)^2 d\mu_\nu(z) \\
&\leq c \sum_{r_j \leq r} r_j^{-k} \int_{B_r(x)} \int_{B_{r_j}(z)} W_{8r_j}(y) - W_{r_j}(y) + c \eta r^\alpha_j d\mu_\nu(y) d\mu_\nu(z) \\
&\leq c \int_{B_{2r}(x)} \left( \sum_{r_j \leq r} W_{8r_j}(y) - W_{r_j}(y) + c \eta r^\alpha_j \right) d\mu_\nu(y) \\
&\leq c \eta r^k.
\end{align*}
Now we can ensure $\eta$ is small and use Rectifiable-Reifenberg Theorem \ref{thm:rect-reifenberg}, or Theorem 1.1 of \cite{azzam-tolsa}, to deduce $U_\nu$ is rectifiable.

Since rectifiability is stable under countable unions, we obtain rectifiability of each $S^k_\eps$ and $S^k$.
\end{proof}

We now prove the remaining Theorems stated in the Introduction.
\begin{proof}[Proof of Corollary \ref{cor:main-sing}]
Immediate from Proposition \ref{prop:singsetinsidestrata} and Theorems \ref{thm:main-mass}, \ref{thm:rectifiable}.
\end{proof}

\begin{proof}[Proof of Corollary \ref{cor:weak-Lp}]
First, we observe the following:  if $0 \in \partial \fb$, and $\partial \fb$ satisfies the $\eps$-regularity condition \eqref{eqn:eps-reg-cond} in $B_2(0)$, then by Theorem \ref{thm:altcaf} (and in particular the effective estimate of Theorem 8.1 in \cite{alt-caf}), and the higher-regularity of \cite{kinder-niren}, then we have $|D^2 u(0)| \leq c(n)$.

Therefore, using Proposition \ref{prop:fb-almost-sym} and scale-invariance, we deduce there exists some $\eps(n, \Lambda)$ so that $|D^2u(x)| \leq c(n)/r$ whenever $u$ is $(n-k^*+1, \eps)$-symmetric in $B_r(x)$, where $x \in B_1(0)$ and $r \in (0, 1]$.

This implies that
\[
\{ x \in \partial \fb \cap B_1(0) : |D^2u| > 1/r \} \subset S^{n-k^*}_{\eps, c(n)r}(u) \cap B_1(0),
\]
and the required packing estimates follow from Theorem \ref{thm:main-mass}, and the Ahlfors-regularity of the free-boundary (Theorem 4.5 in \cite{alt-caf}).
\end{proof}

Finally, we prove the very first Theorem stated.
\begin{proof}[Proof of Theorem \ref{thm:teaser}]
Rectifiability is direct from Theorem \ref{thm:rectifiable}.  The packing bound is a simple covering argument.  Take $\rho = d(D, \partial D')/20$.  Choose a Vitali cover of $D$ by balls $\{B_{5\rho}(x_i)\}_i$ centered in $D$, so that the $\rho$-balls are disjoint.  In each ball we can apply Theorem \ref{thm:main-mass}.  Since the number of balls is $\leq c(n)\leb^n(D') \rho^{-n}$, we obtain the required packing estimate.
\end{proof}

\bibliographystyle{alpha}
\bibliography{alt-caf}

\end{document}